\def\d{\partial}
\newcommand{\R}{\mathbb{R}}
\def\mr{\mathbf{R}}
\def\mz{\mathbf{Z}}
\renewcommand{\o}{\tilde}
\renewcommand{\d}{\partial}
\renewcommand{\o}{\tilde}
\newcommand{\SO}[1]{\operatorname{SO}(#1)}
\newcommand\id{I}
\newcommand\sym{\operatorname{sym}}
\newcommand\skw{\operatorname{skw}}
\newcommand\dist{\operatorname{dist}}
\DeclareMathOperator{\cof}{cof}
\newcommand\loc{\textrm{loc}}
\def\XXint#1#2#3{{\setbox0=\hbox{$#1{#2#3}{\int}$}
     \vcenter{\hbox{$#2#3$}}\kern-.5\wd0}}
\DeclareMathOperator{\secf}{II}
\newcounter{bei}
\renewcommand{\o}{\overline}
\renewcommand{\div}{\mathrm{div}\,}
\newtheorem{theorem}{Theorem}[section]
\newtheorem{lemma}[theorem]{Lemma}
\newtheorem{proposition}[theorem]{Proposition}
\newtheorem{assumption}[theorem]{Assumption}
\newtheorem{definition}[theorem]{Definition}
\newtheorem{remark}[theorem]{Remark}
\theoremstyle{remark}
\newtheorem{example}[theorem]{Example}
\newcommand{\randomspace}{\Omega}
\newcommand{\pspace}{S}
\newcommand{\randomelement}{\omega}
\newcommand{\tomega}{{\widetilde\randomelement}}
\newcommand{\randommeasure}{P}
\newcommand{\td}{\mathrm{d}}
\newcommand{\drandommeasure}{\mathrm{d}\randommeasure(\randomelement)}
\newcommand{\wtwoscale}{\xrightharpoonup{2}}
\newcommand{\wctwoscale}{\xrightharpoonup{2c}}
\newcommand{\stwoscale}{\xrightarrow{2}}
\newcommand{\weakly}{\rightharpoonup}
\newcommand{\stochsobolev}{\mathcal W}
\newcommand{\stochsmooth}{\mathcal C^\infty}
\newcommand{\twodomain}{S}
\newcommand{\eps}{\varepsilon}
\newcommand{\testfunctions} { \mathcal D}
\newcommand{\distributions}{\mathcal D'}
\newcommand{\adh}{\mathrm{adh}}
\newcommand{\de}{u}
\newcommand{\weakstar}{\stackrel{\star}{\weakly}}
\let\intersect\cap
\let\bigunion\bigcup
\DeclarePairedDelimiter\abs{\lvert}{\rvert}
\DeclarePairedDelimiter\norm{\lVert}{\rVert}
\DeclarePairedDelimiter\scalar{\langle}{\rangle}
\DeclarePairedDelimiter\set{\{}{\}}
\DeclarePairedDelimiter\paren{(}{)}
\DeclarePairedDelimiter\brackets{[}{]}
\newcommand\restrict[2]{{
  \left.\kern-\nulldelimiterspace 
  #1 
  \vphantom{\big|} 
  \right|_{#2} 
  }}
\newcommand{\matt}[1]{{\color{red}{#1}} }
\begin{document}
\title{Stochastic homogenization of the bending plate model}

\author{
Peter Hornung
\\
FB Mathematik, TU Dresden
\\
01062 Dresden (Germany)
%
\\
\\
Matth\"aus Pawelczyk
\\
FB Mathematik, TU Dresden
\\
01062 Dresden (Germany)
%
\\
\\
Igor Vel\v ci\'c
\\
University of Zagreb, \\Faculty of Electrical Engineering and Computing
\\
Unska 3, Zagreb (Croatia)
}

\date{}

\maketitle
\begin{abstract}
We use the notion of stochastic two-scale convergence introduced in \cite{zhikov1} to solve the problem of stochastic homogenization of the elastic plate in the bending regime. 
\end{abstract}
\vspace{10pt}

\noindent {\bf Keywords:}
elasticity, dimension reduction, stochastic homogenization, stochastic
two-scale convergence.

\section{Introduction}

The problem of rigorously deriving a two-dimensional model approximating a three-dimensional (nonlinear) elastic plate with 
very small thickness was long outstanding. It was finally done in \cite{FJM-02} in terms of $\Gamma$-convergence after establishing the geometric rigidity estimate. With this estimate they further managed in \cite{FJM-06} 
to derive a multitude of related models. Their results have since been generalized in various directions, e.g., different dimensions involved (e.g., \cite{MoraMueller-04}), convergence of equilibria instead of convergence of minimizer as $\Gamma$-convergence yields (e.g., \cite{MuellerPakzad-08}, \cite{MoraMueller-08}), or an inhomogeneous plate (e.g.,\cite{Horneuvel12}, \cite{Neukamm-Olbermann}, \cite{Vel13}). This paper falls in the last category.

We consider a thin plate with a fine microstructure on the midplane, extended constantly in normal direction. A very similar problem was studied in \cite{Horneuvel12}, where the microstructure was assumed to be periodic, while we consider more general random materials and recover their main results as a special case. Another interesting generalization of the periodic case was given in \cite{BDF15}, where the microstructure was allowed to oscillate on two different scales $\eps_1(h)$ and $\eps_2(h)$, where the `coarser' structure dominates the homogenization effect.

Already in the periodic case it was seen that the homogenization and the dimension reduction 
interact non-trivially with each other. To be more precise let $h > 0$ denote the thickness of the plate,
and $\eps(h)$ the `fineness' of the microstructure, e.g., in the periodic case the length of a periodic cell, at thickness $h$ with $\eps(h) \to 0$ if $h\to0$ and assume $\gamma = \lim_{h\to0}h\eps^{-1}(h)  \in [0,\infty]$ exists. One might imagine the case $\gamma = \infty$ corresponds to the situation, where we apply purely dimension reduction to an already homogenous plate, while one could expect $\gamma = 0$ to be the case where a $2$D plate is homogenized; the latter, however, is wrong at least in the plate scaling as comparing the results obtained in \cite{Vel13} and \cite{Neukamm-Olbermann} shows.
This intuition, however, holds true for the von K\'arm\'an plate \cite{NeuVel-13}.
 The intermediate case $0 < \gamma < \infty$ corresponds to the case, where both effects strongly interact; in some sense thus the most interesting case.

With a periodic microstructure in \cite{Horneuvel12} the range $\gamma \in (0, \infty]$ was covered, excluding the $0$ entirely.
The methods developed in \cite{Vel13} allows the treatment, at least partially, of the case $\gamma= 0$. Only partially, since we have to assume the microstructure is still sufficiently strong `homogenizing', i.e.\ $h \gg (\eps(h))^2$. The stochastic homogenization incorporates periodic setting, almost periodic setting, but also some completely non-periodic examples (see the Example \ref{borism3} below). Since it is possible to have the situation where periodicity is completely destroyed and since we are not able to treat all cases of the periodic homogenization, we find that it is important to establish the result on the stochastic homogenization of the bending plate. 

As in \cite{Horneuvel12} we make heavy use of two-scale convergence. The first generalization to the stochastic setting of two-scale convergence was done in \cite{mikelic},
which is too crude to recover the information on the limit material. An alternative was introduced by \cite{zhikov1}, which is more flexible, 
and which we will use.

Compared to the periodic setting additional difficulties arise in identifying the two-scale limit
sufficiently enough to recover the effective properties of the material. In fact we are not to recover all the limits as done in \cite{Horneuvel12}, specifically we are not able to use the notion of the oscillatory convergence. 

To cope with this, we use methods developed in \cite{Vel13} and make use of further cancelation effects  (see Lemma~\ref{lem:(31)} and Lemma~\ref{lem:(33)}).
   Furthermore, the precise relationship between solenoids and potential fields 
were not known, when the differential operators $\div$ and $\nabla$ were not either purely classical, or purely stochastic derivatives, but mixtures between them. In the appendix we recall previous results for the purely stochastic case, and prove the Helmholtz-decomposition for the mixed one.
This allows sufficient identification of the two-scale limits by testing with solenoids,
a subclass of functions used in the oscillatory convergence, introduced in \cite{Horneuvel12}.

For simplicity, we will state and prove the case $ \gamma \in (0,\infty)$, but the other cases covered  in \cite{Horneuvel12,Vel13} can be proved analogously. This includes the case $\gamma= \infty$ as well as $\gamma=0$, under the additional assumption that $\varepsilon(h)^2 \ll h \ll \varepsilon(h)$. 

Without the notion of stochastic two-scale convergence we are not able 
to solve the problem;
the usual approach for stochastic homogenization in the context of 
calculus of variation however does not necessarily need stochastic two-scale 
convergence, cf. \cite{dalmasomodica1,dalmasomodica2} for the convex 
case and \cite{gloria1} for the non-convex case. The main claims of the paper are given in Theorem~\ref{theoremlowbou} and Theorem~\ref{theoremupperbound}.
\subsection*{Notation}
 The inner product of the Hilbert space $H$ for the vectors $a,b \in H$ is denoted by $\scalar{a,b}_H$. If $H=\mathbf{R}^n$ it is also denoted by $a \cdot b$. 
By $\iota: \mathbf{R}^{2 \times 2} \to \mathbf{R}^{3 \times 3}$ we denote the natural inclusion
\[
  \iota (G)=\sum_{\alpha,\beta=1,2}  G_{\alpha \beta} e_{\alpha} \otimes e_{\beta}.
\]
For a matrix $M \in \mathbf{R}^{n \times n}$ we denote its transpose by $M^T$ and by $\cof M$ its cofactor matrix. 
By $\id_{n\times n}$ we denote the identity matrix in $\mathbf{R}^{n \times n}$. 
By $\nabla'$ we denote the gradient with respect to the first two variables $\nabla'f=(\partial_1 f, \partial_2 f)$ and similar $x' = (x_1, x_2) \in \mr^2$ for $x\in \mr^3$. 
For $h > 0$ we furthermore denote by $\nabla_h$ the scaled gradient, given by $\nabla_h f = ( \nabla' f, \frac 1 h \partial_3 f)$.
By $\mathbf{R}^{n \times n}_{\sym}$ we denote the space of symmetric matrices, while by $\mathbf{R}^{n \times n}_{\skw }$ we denote the space of antisymmetric matrices.
 For a normed space $X$ and $S \subset X$ we denote by $\adh_X S$ the closure of the set $S$ in the norm defined on the space $X$.

\section{Stochastic two-scale convergence} 
\subsection{Probability framework} 
Let $(\Omega, \mathcal{F},P)$ be a probability space. We will assume that $\mathcal{F}$ is countably generated which implies that the spaces $L^p(\Omega)$, for $p \in [1,\infty) $, are separable.
By $S$ we will denote the domain in $\mathbf{R}^n$. 
With $I$ we denote the interval $I=[-\tfrac{1}{2}, \tfrac{1}{2}]$.
\begin{definition}\label{defgroup}
	A family $(T_x)_{x \in \mathbf{R}^n}$ of measurable bijective mappings $T_x:\Omega \to \Omega$ on a probability space $(\Omega, \mathcal{F}, P)$ is called a dynamical system on $\Omega$ with respect to $P$ if
	\begin{enumerate}
		\item $T_x \circ T_y=T_{x+y}$;
		\item $P(T_{x} F )=P(F)$, $\forall x \in \mathbf{R}^n$, $F \in \mathcal{F}$;
		\item $\mathcal{T}: \Omega \times \mathbf{R}^n  \to \Omega$, $(\omega,x)\to T_x (\omega)$ is measurable (for the standard $\sigma$-algebra on the product space, where on $\mathbf{R}^n$ we take the Lebesgue $\sigma$-algebra).
	\end{enumerate}
\end{definition}
We define the notion of ergodicity for the dynamical system.
\begin{definition}\label{defergodic}
A dynamical system is called ergodic if one of the following equivalent condion is fullfilled 
\begin{enumerate}
	\item $f$ measurable, $f(\omega)=f(T_x \omega), \ \forall x \in \mathbf{R}^n, \textrm{ a.e. } \omega \in \Omega \implies f(\omega)=\textrm{const. for $P$-a.e. } \omega \in \Omega$.
        \item\label{def:ergodicityb} $\brackets[\Big]{\forall x \in \mathbf{R}^n,   P((T_x B \cup B) \backslash (T_x B \cap B))=0} \implies P(B) \in \{0,1\}$.  
\end{enumerate}	
\end{definition} 
\begin{remark}\label{rem:ergo-weak}
Note that for the condition \ref{def:ergodicityb} the implication $P(B) \in \set { 0,1}$ has to hold, if the symmetric difference between $T_x B$ and $B$ is a null set.
It can be shown (e.g., \cite{cornfeld}), that ergodicity is also equivalent if a priori only the weaker implication
\[
  \brackets[\Big]{\forall x \in \mr^N, \quad T_x B = B} \implies P(B) \in \set{ 0,1}
\]
holds. This formulation will however only be used in the appendix to show that the product of an ergodic system with a periodic one is once more ergodic.
\end{remark}

For $f \in L^p(\Omega)$ write $f(\omega,x):=f(T_x \omega)$, defining the realization $f \in L^p_{\textrm{loc}}(\mathbf{R}^n, L^p(\Omega))$. 
On $L^2(\Omega)$ we can define the unitary action
\[ U(x)f=f \circ T_x, \quad \forall f \in L^2(\Omega). \]
It  can be shown that $a,b,c$ of Definition \ref{defgroup} imply that this is a strongly continuous group (see \cite{zhikov2}).
We define the operator $D_i$ as the infinitesimal generator of the unitary group $U_{x_i}$. 
This means that
\[
  D_i f(\omega)=\lim_{x_i \to 0}  \frac{f(T_{x_i}\omega)-f(\omega) }{x_i},    
\]
where the limit is taken in $L^2$ sense. 
Also we have that  $i D_1,\dots, i D_n$ are commuting, self-adjoint, closed, and densely defined linear operators on the separable Hilbert space $L^2 (\Omega)$. 
The domain $\mathcal{D}_i(\Omega)$ of such an operator is given by the set of $L^2$ functions for which the limit exists. We denote by $W^{1,2}(\Omega)$ the set
\[
   W^{1,2}(\Omega):=\mathcal{D}_1(\Omega) \cap \dots \cap \mathcal{D}_n(\Omega)
\]
and similarly
\begin{align*}
	W^{k,2} (\Omega)&=\{f \in L^2(\Omega): D_1^{\alpha_1}\dots D_n^{\alpha_n} f \in L^2(\Omega),\; \alpha_1+\cdots +\alpha_n=k\};\\
	W^{\infty,2} (\Omega)&= \bigcap_{k \in \mathbf{N}} W^{k,2}(\Omega).
\end{align*}
By the standard semigroup property it can be shown that $W^{\infty,2}(\Omega)$ is dense in $L^2(\Omega)$. 
We also define the space
\[
   \mathcal{C}^{\infty} (\Omega)= \set[\big] {f \in W^{\infty,2} (\Omega): \forall (\alpha_1,\dots, \alpha_n) \in \mathbf{N}_0^n, \quad D_1^{\alpha_1} \dots D_n^{\alpha_n} f \in L^{\infty} (\Omega) }. 
\]
By the smoothening procedure explained below it can be shown that $\mathcal{C}^{\infty} (\Omega)$ is dense in $L^p(\Omega)$ for any $p \in [1,\infty)$ as well as in $W^{k,2}(\Omega)$ for any $k$. 
Notice that $D_i f$, due to the closedness property of the infinitesimal generator, can be equivalently defined as the function that satisfies the property 
\[
   \int_{\Omega} D_i f g =-\int_{\Omega } f D_i g,\quad \forall g \in \mathcal{C}^{\infty}(\Omega).
\]
After identifying $f \in W^{1,2} (\Omega)$ with its realization, one can show that 
\begin{equation}\begin{aligned}\label{eq:sobolevrealizations}
W^{1,2} (\Omega) &= \{ f \in W^{1,2}_{\textrm{loc}} (\mathbf{R}^n, L^2(\Omega)): f(x+y,\omega)= f(x, T_y \omega), \quad \forall x,y, \text{for a.e. }  \omega \}\\ 
&= \{ f \in C^1(\mathbf{R}^n, L^2(\Omega)): f(x+y,\omega)= f(x, T_y \omega), \quad  \forall x,y,\text{for a.e. }  \omega  \}. 
\end{aligned}\end{equation}
A proof of this fact can be found in \cite{gloria1}[Lemma~A.7].

As in \cite{zhikov2} we define a smoothening operator. For $\varphi \in L^{\infty} (\Omega)$ and $K \in C^{\infty}_0 (\mathbf{R}^n)$ even, i.e. $K(x) = K(-x)$ for all $x \in \mr^n$, we set
\[
  \paren{\varphi*K}(\omega):=\int_{\mathbf{R}^n} \varphi(T_x\omega) K(x) \td x ,\quad \omega \in \Omega.
\]
It is easily seen that $\varphi \mapsto \varphi *K$ is well defined and continuous from $L^2(\Omega)$ to $L^2(\Omega)$. 
By using this mollifier one can show that there exists a countable dense subset of $L^2(\Omega)$ and $W^{1,2} (\Omega)$ (see \cite{mikelic}). 
Following \cite{sango1} we denote by $\norm \cdot_{\#,k,2}$ the seminorm on $\mathcal{C}^{\infty} (\Omega)$ given by 
\[
  \norm u_{\#,k, 2}^2=\sum_{\alpha \in \mathbb N^n, \abs { \alpha } = k} \norm {D^\alpha u}^2_{L^2}.
\]
By $\mathcal{W}^{k,2}(\Omega)$ we denote the completion of $\mathcal{C}^{\infty}(\Omega)$ with respect to the seminorm $\norm \cdot_{\#,k,2}$.
The gradient operator $\nabla_{\omega}=(D_{1},\dots, D_n)$ and $\div_{\omega}= \nabla_{\omega} \cdot$ operator extend by continuity uniquely to mappings from $\mathcal{W}^{1,2}(\Omega)$ to $L^2(\Omega,\mathbf{R}^n)$, respectively $\mathcal{W}^{1,2}(\Omega,\mathbf{R}^n)$ to $L^2(\Omega)$. 
By the density argument it is easily seen that $\mathcal{W}^{1,2}(\Omega)$ is also the completion of $W^{1,2}(\Omega)$ in $\norm \cdot_{\#,1,2}$ seminorm. 
We also define $ \stochsobolev_{\sym}^{1,2}(\Omega, \mr^n)$ as the completion of $\stochsmooth(\Omega, \mr^n)$ with respect to the seminorm $\norm \cdot_{\#,\sym,2,n}$
defined by
\[
   \|b\|_{\#,\sym,2,n}= \|\sym \nabla b\|_{L^2}, \forall b \in \stochsmooth(\Omega, \mr^n).
\]

\subsection{Definition and basic properties}

The key property of ergodic systems is the ergodic theorem, due to Birkhoff:
\begin{theorem}[Ergodic theorem] \label{thmergodic}
	Let  $(\Omega,\mathcal{F}, P)$ be a probability space with an ergodic dynamical system $(T_x)_{x \in \mathbf{R}^n}$ on $\Omega$. Let $f \in L^1(\Omega)$ be a function and $A \subset \mathbf{R}^n$ be a bounded open set. Then for $P$-a.e. $\omega \in \Omega$ we have
        \begin{equation}\begin{aligned}\label{eq:birkhoff}
        \lim_{\varepsilon \to 0} \int_A f(T_{\varepsilon^{-1} x}\omega) \td x= |A| \int_{\Omega} f(\omega) \drandommeasure.
        \end{aligned}\end{equation}
	Furthermore, for every $f \in L^p(\Omega)$, $1 \leq p \leq \infty$, and a.e. $\omega \in \Omega$, the function $f (\omega,x)= f(T_x \omega)$ satisfies $f(\omega,\cdot) \in L^p_{\textrm{loc}} (\mathbf{R}^n)$. For $p < \infty$ we have $f(\omega,\cdot/ \varepsilon)=f(T_{ \cdot/ \varepsilon} \omega ) \rightharpoonup \int_{\Omega} f \td P$ weakly in $L^p_{\textrm{loc}}(\mathbf{R}^n )$ as $\varepsilon \to 0$. 
\end{theorem}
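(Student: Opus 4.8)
The plan is to derive \eqref{eq:birkhoff} from the classical multiparameter (Wiener) pointwise ergodic theorem and then to read off the two $L^{p}_{\loc}$ statements as consequences. First I would substitute $y=\varepsilon^{-1}x$, which turns the left-hand side of \eqref{eq:birkhoff} into $\varepsilon^{n}\int_{\varepsilon^{-1}A}f(T_{y}\omega)\,\td y$; writing $R=\varepsilon^{-1}\to\infty$, the claim becomes $R^{-n}\int_{RA}f(T_{y}\omega)\,\td y\to|A|\int_{\Omega}f\,\drandommeasure$ for $P$-a.e.\ $\omega$. I would prove this first for a half-open cube $A=Q$. Here one invokes Wiener's ergodic theorem for the $\mathbf R^{n}$-action $(T_{x})$ (valid for $g\in L^{1}(\Omega)$; obtainable from the Wiener covering lemma and the resulting weak-type maximal inequality, or by reducing to the discrete $\Z^{n}$-ergodic theorem applied to $T_{e_{1}},\dots,T_{e_{n}}$): for $P$-a.e.\ $\omega$ the cube averages $|RQ|^{-1}\int_{RQ}f(T_{y}\omega)\,\td y$ converge as $R\to\infty$ to some $(T_{x})$-invariant function $f^{\ast}\in L^{1}(\Omega)$, with $\int_{\Omega}f^{\ast}\,\drandommeasure=\int_{\Omega}f\,\drandommeasure$ by the mean ergodic theorem and the measure-preservation in Definition~\ref{defgroup}. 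Since $(T_{x})$ is ergodic, $f^{\ast}$ is $P$-a.e.\ constant by Definition~\ref{defergodic}, hence $f^{\ast}=\int_{\Omega}f\,\drandommeasure$, and the cube case follows.

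To pass to a general bounded open set $A$ and to continuous $\varepsilon$, I would split $f=f^{+}-f^{-}$ and argue for $f\ge0$, where the set function $B\mapsto\int_{B}f(T_{y}\omega)\,\td y$ is monotone and finitely additive. Given $\delta>0$, choose finite disjoint unions of small dyadic cubes $C_{-}\subset A\subset C_{+}$ with $|C_{+}\setminus C_{-}|<\delta$; applying the cube case to each constituent cube traps $\limsup_{R\to\infty}$ and $\liminf_{R\to\infty}$ of $R^{-n}\int_{RA}f(T_{y}\omega)\,\td y$ between $|C_{-}|\int_{\Omega}f$ and $|C_{+}|\int_{\Omega}f$; letting $\delta\to0$ gives convergence to $|A|\int_{\Omega}f$ along integers $R$, and the restriction to integer $R$ is removed by comparing $R$ with $\lfloor R\rfloor$ and $\lceil R\rceil$ using the same monotonicity. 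This establishes \eqref{eq:birkhoff}.

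For the regularity statement, let $f\in L^{p}(\Omega)$ with $p<\infty$. By Tonelli and the invariance of $P$, for each $r$ one has $\int_{\Omega}\int_{B_{r}}|f(T_{y}\omega)|^{p}\,\td y\,\drandommeasure=|B_{r}|\,\|f\|_{L^{p}(\Omega)}^{p}<\infty$, so $\int_{B_{r}}|f(T_{y}\omega)|^{p}\,\td y<\infty$ for $P$-a.e.\ $\omega$; a countable union over $r\in\NN$ gives $f(\omega,\cdot)\in L^{p}_{\loc}(\mathbf R^{n})$ for a.e.\ $\omega$, the case $p=\infty$ reducing to the trivial bound $|f(T_{y}\omega)|\le\|f\|_{L^{\infty}(\Omega)}$. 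For the weak convergence with $p<\infty$, truncate: $f_{M}:=(f\wedge M)\vee(-M)\in L^{\infty}(\Omega)$ satisfies $\|f-f_{M}\|_{L^{p}(\Omega)}\to0$ as $M\to\infty$. The bounded family $f_{M}(T_{\cdot/\varepsilon}\omega)$ is uniformly bounded in $L^{\infty}_{\loc}$ and, by \eqref{eq:birkhoff} tested against cube indicators (which span a dense subspace of $L^{1}_{\loc}$), converges weakly-$\star$ in $L^{\infty}_{\loc}$, hence weakly in $L^{p}_{\loc}$, to $\int_{\Omega}f_{M}\,\drandommeasure$. The remainder is handled by H\"older together with \eqref{eq:birkhoff} applied to $|f-f_{M}|^{p}$, which yields $\|(f-f_{M})(T_{\cdot/\varepsilon}\omega)\|_{L^{p}(K)}\to|K|^{1/p}\|f-f_{M}\|_{L^{p}(\Omega)}$ on any compact $K$, uniformly small in $\varepsilon$ once $M$ is large. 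Letting $\varepsilon\to0$ and then $M\to\infty$ gives $f(T_{\cdot/\varepsilon}\omega)\rightharpoonup\int_{\Omega}f\,\drandommeasure$ in $L^{p}_{\loc}(\mathbf R^{n})$.

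I expect the only real difficulty to lie in the first step: the multiparameter pointwise ergodic theorem itself (i.e.\ the maximal inequality behind it) and the bookkeeping that upgrades cube and integer-$R$ averages to arbitrary bounded open $A$ and continuous $\varepsilon$; once those are in place the identification of the limit is immediate from the ergodicity hypothesis. Since this is exactly Birkhoff's (Wiener's) theorem, in practice one cites the multiparameter version from the literature and only the $L^{p}_{\loc}$ corollaries above need to be spelled out.
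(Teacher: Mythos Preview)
The paper does not prove this statement at all: Theorem~\ref{thmergodic} is stated as the classical Birkhoff (Wiener) ergodic theorem, attributed to Birkhoff in the sentence preceding it, and is used as a black box throughout. So there is no ``paper's own proof'' to compare against; your proposal goes well beyond what the paper does.

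That said, your sketch is the standard route and is essentially correct. One point deserves a bit more care: in the approximation step, the constituent dyadic cubes $Q_i$ of $C_\pm$ are not centered at the origin, so after scaling by $R$ the set $RQ_i$ is a cube whose center also runs off to infinity. This is not literally the ``centered cube'' version of Wiener's theorem, and the substitution $y\mapsto y+R\cdot\mathrm{center}(Q_i)$ replaces $\omega$ by $T_{R\cdot\mathrm{center}(Q_i)}\omega$, which varies with $R$. The clean fix is either to invoke the Tempelman/Wiener maximal inequality directly (which controls averages over all translated cubes simultaneously for a.e.\ $\omega$), or to use the formulation of the ergodic theorem for arbitrary F{\o}lner sequences. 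Since you already say that ``in practice one cites the multiparameter version from the literature,'' this is exactly where that citation should be placed, and the rest of your argument (the $L^p_{\loc}$ regularity via Tonelli and the weak convergence via truncation plus \eqref{eq:birkhoff} applied to $|f-f_M|^p$) goes through without issue.
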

The elements $\omega$ such that \eqref{eq:birkhoff} holds for every $f \in L^1(\Omega)$ are called typical elements,
the corresponding trajectories $(T_x \omega)_{x \in \mr^N}$ are called typical trajectories. Note that the separability of $L^1(\Omega)$ implies that almost every $\omega \in \Omega$ is typical. 

In the following we denote by $S \subset \mathbf{R}^n$ a bounded domain, if not otherwise stated. For vector spaces $V_1, V_2$ by $V_1 \otimes V_2$ we denote the usual tensor product of the spaces $V_1,V_2$. We define the following notion of stochastic two-scale convergence, a slight variation of the definition given in \cite{zhikov1}. We will stay in the Hilbert setting, since it suffices for our analysis.

\begin{definition}\label{definicija1}
Let $(T_{x}\randomelement)_{x \in \R^n}$ be a typical trajectory and $(v^\eps)$ a bounded sequence
in $L^2(S)$. We say that $(v^\eps)$ {stochastically weakly two-scale converges} to 
$v \in L^2(\randomspace \times\pspace)$ w.r.t.\ $\randomelement$ and we write
  $v^\eps \wtwoscale v$ if 
\[
  \lim_{\eps \downarrow 0} \int_{\pspace} v^\eps(x)g(T_{\eps^{-1} x} \randomelement,x) \td x
  = \int_{\randomspace} \int_\pspace v(\randomelement,x) g( \randomelement,x) \td x\, \drandommeasure
\]
for all $g \in \stochsmooth(\randomspace)\otimes C^\infty_0(\pspace)$.

If additionally 
\[
  \norm  { v^\eps}_{L^2(S)} \to \norm { v}_{L^2(\Omega \times S)},
\]
holds, then we say $(v^\eps)$ strongly two-scale converges to $v$ and write $v^\eps \stwoscale v$.
\end{definition}
For  vector-valued functions we define the convergence componentwise.

\begin{remark} 
The convergence of the sequence $(v^{\varepsilon})$ is defined along a typical trajectory and thus the limit can also be $\omega$-dependent. We don't write this dependence, since we will always look at the problem on a typical trajectory  (which can be imagined to be fixed).
\end{remark} 	
\begin{remark}
Note that the two-scale limit is defined on the whole space $\Omega \times S$, and also by density we can extend the space of test functions $g$ to $ L^\infty(\Omega)\otimes L^2(S)$. 
\end{remark}
\begin{remark}\label{borism7} 
Since we will assume that our material oscillates in the in-plane direction on the domain $\pspace \times I$ we will often use the notion of in-plane two-scale convergence for the sequence that is bounded in $L^2(S \times I)$, i.e. we  say that bounded sequence in $L^2(\pspace \times I)$, $(v^\eps)$ {stochastically weakly two-scale converges} to 
$v \in L^2( \randomspace\times \pspace \times I)$ w.r.t.\ $\randomelement$ and we write
$v^\eps \wtwoscale v$ if 
\[
\lim_{\eps \downarrow 0} \int_{\pspace \times I} v^\eps(x)g(T_{\eps^{-1} x'}\randomelement,x) \td x
= \int_{\randomspace} \int_{\pspace \times I} v( \randomelement,x) g(\randomelement,x) \td x\, \drandommeasure
\]
for all $g \in \stochsmooth(\randomspace) \otimes C^\infty_0(\pspace \times I)$.
All the properties of the previous stochastic two-scale convergence remain valid for this variation as well. 
\end{remark}

Sometimes we will make the decomposition for the two-scale limit $v$
$$ v(\omega,x)=\int_{\Omega} v(\omega,x)\drandommeasure+\left(v(\omega,x)-\int_{\Omega} v(\omega,x)\drandommeasure      \right),$$
separating the weak limit from the oscillatory part. We will then write 
$$v^{\eps} \wctwoscale  v(\omega,x)-\int_{\Omega} v(\omega,x)\drandommeasure.      $$
\begin{proposition}[Compactness]\label{lem:compactness}
Let $(v^\eps)$ be a bounded sequence in $L^2(S)$.
Then there exists a subsequence (not relabeled) and $v \in L^2(\randomspace \times S)$ 
such that $v^{\eps} \wtwoscale v$.
\end{proposition}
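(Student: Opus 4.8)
The plan is to obtain $v$ through the Riesz representation theorem applied to a limit functional, with Birkhoff's ergodic theorem (Theorem~\ref{thmergodic}) playing the role that the Riemann--Lebesgue lemma plays in the periodic case. Fix once and for all the typical trajectory $(T_x\randomelement)_{x\in\R^n}$ along which we work and set $C:=\sup_\eps\norm{v^\eps}_{L^2(S)}<\infty$. For $\eps>0$ and $g\in\stochsmooth(\randomspace)\otimes C^\infty_0(\pspace)$ define
\[
  L^\eps(g):=\int_\pspace v^\eps(x)\,g(T_{\eps^{-1}x}\randomelement,x)\,\td x ,
\]
so that the conclusion of the proposition is precisely that, along a subsequence, $L^\eps(g)\to\int_\randomspace\int_\pspace v\,g\,\td x\,\drandommeasure$ for every such $g$.

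The first and main step is the estimate
\[
  |L^\eps(g)|\;\le\;C\,\Big(\int_\pspace |g(T_{\eps^{-1}x}\randomelement,x)|^2\,\td x\Big)^{1/2},
\]
together with the claim that for a typical $\randomelement$ one has $\int_\pspace |g(T_{\eps^{-1}x}\randomelement,x)|^2\,\td x \to \norm{g}^2_{L^2(\randomspace\times\pspace)}$ as $\eps\downarrow0$. I would prove the claim by writing $g=\sum_i\phi_i\psi_i$ with $\phi_i\in\stochsmooth(\randomspace)$, $\psi_i\in C^\infty_0(\pspace)$, expanding $|g(T_{\eps^{-1}x}\randomelement,x)|^2=\sum_{i,j}(\phi_i\phi_j)(T_{\eps^{-1}x}\randomelement)\,\psi_i(x)\psi_j(x)$, and applying the weak-convergence part of Theorem~\ref{thmergodic} to each $\phi_i\phi_j\in L^\infty(\randomspace)\subset L^1(\randomspace)$: its realization converges weakly in $L^2_{\mathrm{loc}}(\R^n)$ to $\int_\randomspace\phi_i\phi_j\,\td P$, which I pair with $\psi_i\psi_j\in L^2(\pspace)$ and sum. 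Since we work along a typical trajectory, \eqref{eq:birkhoff} and the associated weak convergence hold simultaneously for all the (finitely many) functions $\phi_i\phi_j$, so no further exceptional sets appear. In particular $\limsup_\eps|L^\eps(g)|\le C\norm{g}_{L^2(\randomspace\times\pspace)}$.

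Next I would invoke separability: $\mathcal F$ countably generated makes $L^2(\randomspace)$ separable, $C^\infty_0(\pspace)$ is separable in $L^2(\pspace)$, and $\stochsmooth(\randomspace)$ is dense in $L^2(\randomspace)$, so there is a countable set $\{g_k\}\subset\stochsmooth(\randomspace)\otimes C^\infty_0(\pspace)$ dense in $L^2(\randomspace\times\pspace)$. The numbers $L^\eps(g_k)$ are bounded in $\eps$ for each $k$, so by a diagonal argument I pass to a subsequence (not relabeled) along which $L^\eps(g_k)\to L(g_k)$ for all $k$, with $|L(g_k)|\le C\norm{g_k}_{L^2(\randomspace\times\pspace)}$. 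Hence $L$ extends uniquely to a bounded linear functional on $L^2(\randomspace\times\pspace)$ of norm $\le C$, and Riesz representation produces $v\in L^2(\randomspace\times\pspace)$, $\norm{v}_{L^2(\randomspace\times\pspace)}\le C$, with $L(g)=\int_\randomspace\int_\pspace v\,g\,\td x\,\drandommeasure$.

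Finally I would upgrade the convergence from the dense family to every $g\in\stochsmooth(\randomspace)\otimes C^\infty_0(\pspace)$: given $\delta>0$ choose $g_k$ with $\norm{g-g_k}_{L^2(\randomspace\times\pspace)}<\delta$; since $g-g_k$ is again of the admissible product form, the first step applies to it and gives $\limsup_\eps|L^\eps(g)-L^\eps(g_k)|\le C\norm{g-g_k}_{L^2(\randomspace\times\pspace)}<C\delta$, and combining this with $L^\eps(g_k)\to L(g_k)$ and $|L(g_k)-L(g)|\le C\delta$ yields $\limsup_\eps|L^\eps(g)-L(g)|\le 2C\delta$; letting $\delta\downarrow0$ finishes the argument. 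The in-plane variant of Remark~\ref{borism7} is proved identically, with $T_{\eps^{-1}x'}$ and $\pspace\times I$ in place of $T_{\eps^{-1}x}$ and $\pspace$. The only place where anything beyond soft functional analysis enters is the limit $\int_\pspace|g(T_{\eps^{-1}x}\randomelement,x)|^2\td x\to\norm{g}_{L^2(\randomspace\times\pspace)}^2$; the single point to watch there is that $g$ must reach the fast variable through the product structure, so that Birkhoff applied to the bounded functions $\phi_i\phi_j$ can be tested against the compactly supported slow-variable factors $\psi_i\psi_j$.
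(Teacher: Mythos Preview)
Your argument is correct and is precisely the standard route: Cauchy--Schwarz, Birkhoff to control $\int_\pspace|g(T_{\eps^{-1}x}\randomelement,x)|^2\td x$, a diagonal extraction on a countable dense family, and Riesz representation. The paper does not supply its own proof here but simply refers to \cite{zhikov1}[Lemma~5.1], whose argument is exactly the one you have written out; so there is nothing to compare beyond noting that you have reproduced the cited proof.
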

A proof can be found in \cite{zhikov1}[Lemma~5.1].

 The following proposition states the compatibilty of strong convergent sequences with weak two-scale convergent sequences.
\begin{proposition}\label{propositionreasonable} 
\begin{enumerate}
	\item If $(u^\eps) \subset L^2(S)$ is a bounded sequence with $u^\eps \to u$ in $L^2(S)$ for some $u \in L^2(S)$, then $u^\eps \wtwoscale u$.
	\item If $(v^\eps) \subset L^{\infty} (S)$ is uniformly bounded by a constant and $v^\eps \to v$ strongly in $L^1(S)$ for some $v \in L^\infty(S)$, and if $(u^\eps )$ is bounded in $L^2(S)$ with  $u^\eps \wtwoscale u$ for some $u \in L^2( \Omega \times S)$, then we have that $v^\eps u^\eps \wtwoscale vu$.    
\end{enumerate}
\end{proposition}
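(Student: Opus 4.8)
The plan is to prove each part by reducing to the defining test-function identity in Definition \ref{definicija1}.

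\textbf{Part (1).} Let $g \in \stochsmooth(\Omega)\otimes C^\infty_0(S)$. I would split
\[
  \int_S u^\eps(x) g(T_{\eps^{-1}x}\omega,x)\,\td x
  = \int_S \bigl(u^\eps(x)-u(x)\bigr) g(T_{\eps^{-1}x}\omega,x)\,\td x
  + \int_S u(x) g(T_{\eps^{-1}x}\omega,x)\,\td x.
\]
For the first term, $|g(T_{\eps^{-1}x}\omega,x)|\le \norm{g}_{L^\infty(\Omega)\otimes L^\infty(S)}$ uniformly in $\eps$ (here one uses $g\in\stochsmooth(\Omega)\otimes C^\infty_0(S)$, so $g$ is bounded and compactly supported in $x$), whence this term is bounded by $\norm{u^\eps-u}_{L^2(S)}\norm{g}_\infty |S|^{1/2}\to 0$. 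For the second term, write $g=\sum_{j} \varphi_j(\omega)\psi_j(x)$ (a finite sum, or approximate); then $g(T_{\eps^{-1}x}\omega,x)=\sum_j \varphi_j(T_{\eps^{-1}x}\omega)\psi_j(x)$, and by the Birkhoff ergodic theorem (Theorem \ref{thmergodic}, second part, applicable since $\omega$ is a typical trajectory) $\varphi_j(T_{\cdot/\eps}\omega)\weakly \int_\Omega \varphi_j\,\td P$ weakly in $L^2_{\loc}(\R^n)$, so $u\psi_j \in L^2(S)$ tests against it to give $\int_S u(x)\psi_j(x)\,\td x \int_\Omega\varphi_j\,\td P$. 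Summing over $j$, the second term converges to $\int_\Omega\int_S u(x) g(\omega,x)\,\td x\,\td P$, which is exactly the right-hand side of the two-scale limit with limit function $u$ (viewed as independent of $\omega$). Hence $u^\eps \wtwoscale u$.

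\textbf{Part (2).} Fix $g \in \stochsmooth(\Omega)\otimes C^\infty_0(S)$ and consider
\[
  \int_S v^\eps(x) u^\eps(x) g(T_{\eps^{-1}x}\omega,x)\,\td x
  = \int_S u^\eps(x)\,\bigl(v^\eps(x) g(T_{\eps^{-1}x}\omega,x)\bigr)\,\td x.
\]
The idea is to treat $v^\eps g(T_{\eps^{-1}\cdot}\omega,\cdot)$ as an ``almost admissible'' test function for the weak two-scale convergence of $u^\eps$. I would add and subtract $v(x) g(T_{\eps^{-1}x}\omega,x)$:
\[
  \int_S u^\eps \bigl(v^\eps - v\bigr) g(T_{\eps^{-1}\cdot}\omega,\cdot)\,\td x
  + \int_S u^\eps\, v\, g(T_{\eps^{-1}\cdot}\omega,\cdot)\,\td x.
\]
For the first term, $\abs{(v^\eps - v)(x)}\le \abs{v^\eps(x)-v(x)}$ and $v^\eps, v$ are uniformly bounded, so $(v^\eps-v)$ is bounded in $L^\infty$ and $\to 0$ in $L^1$; interpolation gives $v^\eps - v \to 0$ in $L^2(S)$. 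Since $(u^\eps)$ is bounded in $L^2(S)$ and $g(T_{\eps^{-1}\cdot}\omega,\cdot)$ is uniformly bounded, Cauchy–Schwarz bounds this term by $C\norm{u^\eps}_{L^2}\norm{v^\eps-v}_{L^2}\to 0$. For the second term, $vg$ is a legitimate object to test against only after one checks it can be approximated by functions in $\stochsmooth(\Omega)\otimes C^\infty_0(S)$ in the relevant topology; since $v\in L^\infty(S)$ and $u^\eps\wtwoscale u$ extends (by the density remark after Definition \ref{definicija1}) to test functions in $L^\infty(\Omega)\otimes L^2(S)$, and $vg \in L^\infty(\Omega)\otimes L^2(S)$, we get directly
\[
  \int_S u^\eps(x)\, v(x)\, g(T_{\eps^{-1}x}\omega,x)\,\td x \longrightarrow
  \int_\Omega\int_S u(\omega,x)\, v(x)\, g(\omega,x)\,\td x\,\drandommeasure,
\]
which is the right-hand side of the two-scale limit for $vu$. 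Combining the two pieces yields $v^\eps u^\eps \wtwoscale vu$.

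\textbf{Main obstacle.} The routine estimates are the $\eps\to0$ vanishing of the ``difference'' terms. The genuine point requiring care is the justification in part (2) that one may test $u^\eps\wtwoscale u$ against $v\,g$ with $v$ merely in $L^\infty(S)$ (not smooth); this rests on the density extension noted in the remark following Definition \ref{definicija1}, namely that weak two-scale convergence of an $L^2$-bounded sequence against $\stochsmooth(\Omega)\otimes C^\infty_0(S)$ automatically upgrades to testing against $L^\infty(\Omega)\otimes L^2(S)$ by a density/uniform-boundedness argument. I would state this explicitly and, if needed, supply the short approximation argument: approximate $v$ in $L^2(S)$ by $v_k\in C^\infty_0(S)$ with $\norm{v_k}_{L^\infty}\le C$, control the error uniformly in $\eps$ using boundedness of $(u^\eps)$ in $L^2$ and of $g$ in $L^\infty$, and pass to the limit first in $\eps$ then in $k$.
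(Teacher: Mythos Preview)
Your proposal is correct; the paper itself provides no proof beyond the sentence ``The proof is straightforward,'' and what you have written is precisely the natural argument one fills in: split off the strongly convergent piece, use Birkhoff for the remainder in part~(1), and in part~(2) interpolate $v^\eps-v\to 0$ in $L^2$ and invoke the density remark to test against $vg\in L^\infty(\Omega)\otimes L^2(S)$.
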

The proof is straightforward. The next lemma is useful to prove the following Lemma~\ref{lem:twoscalegradient}, which gives us the form of stochastic two-scale limits of gradients.

\begin{lemma} \label{prva}
Let $f \in (L^2 (\Omega))^n$ be such that
\[
  \int_{\Omega} f\cdot g=0, \quad \forall g \in \mathcal{C}^{\infty} (\Omega,\mr^n) \textrm{ satisfying } \div_{\omega} g=0.
\]
Then there exists $\psi \in \mathcal{W}^{1,2} (\Omega)$ such that 
$f=\nabla_{\omega} \psi$.
\end{lemma}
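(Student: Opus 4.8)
The statement is a Helmholtz-type decomposition for the purely stochastic setting: the orthogonal complement (in $L^2(\Omega)^n$) of the divergence-free smooth vector fields is exactly the closure of the stochastic gradients. I would set up the proof as a standard orthogonality argument combined with a duality/surjectivity step.

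First I would introduce the closed subspace $V := \adh_{L^2(\Omega,\mathbf{R}^n)} \set{ \nabla_\omega \psi : \psi \in \stochsmooth(\Omega) }$, which is well-defined since the gradient operator extends continuously to $\stochsobolev^{1,2}(\Omega)$; equivalently $V = \ran(\nabla_\omega : \stochsobolev^{1,2}(\Omega) \to L^2(\Omega,\mathbf{R}^n))$ once one checks this range is closed (this follows because $\stochsobolev^{1,2}$ is by construction the completion making $\nabla_\omega$ an isometry onto its image). I would then argue that $f$ as in the hypothesis lies in $V$ by showing that $V^\perp$ (the orthogonal complement in $L^2(\Omega,\mathbf{R}^n)$) consists precisely of fields $g$ with $\divg_\omega g = 0$ in the weak sense, and that such $g$ can be approximated in $L^2$ by \emph{smooth} divergence-free fields. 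Granting that approximation, the hypothesis $\int_\Omega f\cdot g = 0$ for all smooth divergence-free $g$ extends by density to all of $V^\perp$, hence $f \in (V^\perp)^\perp = V$, which is exactly the claim.

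So the two things to verify are: (i) $g \in V^\perp \iff \divg_\omega g = 0$ weakly, i.e. $\int_\Omega g \cdot \nabla_\omega \psi = 0$ for all $\psi \in \stochsmooth(\Omega)$ — this is immediate from the definition of $V$ and the fact that $\stochsmooth(\Omega)$ is dense in $\stochsobolev^{1,2}(\Omega)$ in the relevant seminorm; and (ii) the density of smooth divergence-free fields in the space of all weakly divergence-free $L^2$ fields. For (ii) the natural tool is the smoothening operator $\varphi \mapsto \varphi * K$ recalled in the excerpt: if $\divg_\omega g = 0$ weakly, then $g*K$ (applied componentwise, with $K$ even) is smooth, converges to $g$ in $L^2(\Omega,\mathbf{R}^n)$ as $K$ runs through an approximate identity, and commutes with $D_i$ so that $\divg_\omega(g*K) = (\divg_\omega g)*K = 0$ — hence $g*K$ is a genuine smooth solenoidal field approximating $g$. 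One subtlety: the mollified field is smooth in the $\stochsmooth$ sense only after also checking the $L^\infty$ bounds on derivatives, but cutting off or using that $D^\alpha(g*K) = g * (D^\alpha K)$ with $g \in L^2$ gives at least $W^{\infty,2}$; a further standard truncation argument upgrades this to $\stochsmooth$ if needed, and in fact for the orthogonality argument membership in $W^{\infty,2}$ suffices since one only needs a class dense enough to characterize $V^\perp$.

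The main obstacle is step (ii), the approximation of weakly divergence-free fields by smooth ones: one must confirm that the mollifier $*K$ genuinely commutes with the stochastic derivatives $D_i$ (this uses that $D_i$ is the generator of the group $U(x)$ and that $*K$ is an average of the $U(x)$'s, so they commute on the common domain) and that no boundary effects intrude — which is the advantage of the purely stochastic setting, where $\Omega$ has no boundary. Once commutativity is in hand, the rest is the routine Hilbert-space orthogonal-complement manipulation. I would remark that the mixed classical/stochastic version proved in the appendix is genuinely harder precisely because there the mollification in the stochastic variable interacts with classical derivatives in the $S$-variable, whereas here the argument is clean.
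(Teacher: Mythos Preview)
Your approach is correct and is essentially the content of the Helmholtz decomposition (Theorem~\ref{thm:firstordercomp}) that the paper simply cites from \cite{gloria1}; the paper's proof is the one-line ``immediate consequence of Theorem~\ref{thm:firstordercomp}'', whereas you have unpacked the argument behind that theorem.

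One comment on the technical point you flag. Your worry about $\mathcal C^\infty$ versus $W^{\infty,2}$ is legitimate: mollifying a merely $L^2$ solenoidal field $g$ yields $g*K\in W^{\infty,2}$, but not $\mathcal C^\infty$ in general, and the hypothesis of the lemma only tests against $\mathcal C^\infty$ solenoids. Your suggested fix (``a further standard truncation argument'') is not quite straightforward, since truncating $g$ destroys the divergence-free condition. The cleanest way around this --- and the way the cited theorem is actually set up --- is to use the \emph{curl characterisation} of $F^2_{\mathrm{sol}}(\Omega)$ given in Theorem~\ref{thm:firstordercomp}: for $n=2,3$ the mean-zero solenoidal fields are the $L^2$-closure of curls of $W^{1,2}$ potentials, hence (by density of $\mathcal C^\infty$ in $W^{1,2}$) of curls of $\mathcal C^\infty$ potentials, and these curls are genuinely in $\mathcal C^\infty(\Omega,\mathbf R^n)$ with $\div_\omega=0$. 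Together with the constants this gives the required density of $\mathcal C^\infty$ solenoids in $V^\perp$, closing the gap without any truncation. Your direct mollification route works perfectly for identifying $V^\perp$ with the weakly solenoidal fields, but to match the $\mathcal C^\infty$ test class in the hypothesis the curl representation is the cleaner device.
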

\begin{proof}
It is an immediate consequence of Theorem~\ref{thm:firstordercomp}.
\end{proof} 	
\begin{lemma}\label{lem:twoscalegradient}
Let $(u^\eps)$ be a bounded sequence in $W^{1,2}(S)$. Then on a subsequence (not relabeled) 
$u^\eps \weakly u^0$ in $W^{1,2}(S)$ and there exists $u^1 \in L^2( S, \stochsobolev^{1,2}(\randomspace))$ 
such that
\begin{align*}	
  \nabla u^\eps \wtwoscale \nabla u^0 + \nabla_\omega u^1\,.
\end{align*}
\end{lemma}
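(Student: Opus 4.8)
The plan is to reduce the statement to the two ingredients already prepared in the excerpt: the compactness Proposition~\ref{lem:compactness}, which produces two-scale limits of $u^\eps$ and of $\nabla u^\eps$, and Lemma~\ref{prva}, which characterizes the oscillatory part of the limit of the gradient as a stochastic gradient. First, since $(u^\eps)$ is bounded in $W^{1,2}(S)$, both $(u^\eps)$ and $(\nabla u^\eps)$ are bounded in $L^2$, so after passing to a subsequence we may assume $u^\eps \weakly u^0$ in $W^{1,2}(S)$ for some $u^0$, and (componentwise) $\nabla u^\eps \wtwoscale \chi$ for some $\chi \in L^2(\Omega \times S, \mr^n)$. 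By Proposition~\ref{propositionreasonable}(1) applied to the strongly convergent sequence $u^\eps \to u^0$ in $L^2(S)$, the $\Omega$-average of $\chi$ must coincide with the weak limit of $\nabla u^\eps$ in $L^2(S)$, which is $\nabla u^0$; that is, $\int_\Omega \chi(\omega,\cdot)\,\td P = \nabla u^0$. So it remains to show that the purely oscillatory part $\chi - \nabla u^0$ is, for a.e.\ $x\in S$, a stochastic gradient $\nabla_\omega u^1(x,\cdot)$, with the map $x \mapsto u^1(x,\cdot)$ lying in $L^2(S, \stochsobolev^{1,2}(\Omega))$.

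The key step is to test the two-scale convergence against divergence-free stochastic fields. Take $\phi \in C^\infty_0(S)$ and $g \in \stochsmooth(\Omega,\mr^n)$ with $\div_\omega g = 0$, and consider the oscillating test field $x \mapsto \phi(x)\, g(T_{\eps^{-1}x}\randomelement)$. The crucial observation is that, because $\div_\omega g = 0$, the classical divergence of $x\mapsto g(T_{\eps^{-1}x}\omega)$ vanishes in the realization (the $\eps^{-1}$-scaling of the spatial derivative of the realization is exactly $\div_\omega g$ evaluated along the trajectory, up to the usual chain rule), so that integrating by parts
\[
  \int_S \nabla u^\eps(x)\cdot \phi(x)\, g(T_{\eps^{-1}x}\randomelement)\,\td x
  = -\int_S u^\eps(x)\, g(T_{\eps^{-1}x}\randomelement)\cdot \nabla\phi(x)\,\td x
\]
produces no $\eps^{-1}$-singular term. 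Passing to the limit: the left side tends to $\int_\Omega\int_S \chi(\omega,x)\cdot \phi(x) g(\omega)\,\td x\,\td P$ by definition of $\wtwoscale$, while the right side tends to $-\int_\Omega\int_S u^0(x)\, g(\omega)\cdot\nabla\phi(x)\,\td x\,\td P = \int_\Omega\int_S \nabla u^0(x)\cdot \phi(x)\, g(\omega)\,\td x\,\td P$ (using that $u^\eps \to u^0$ strongly in $L^2$, that $g(T_{\eps^{-1}\cdot}\omega)\weakly \int_\Omega g$ by the ergodic theorem, and that $\int_\Omega g = 0$ since constants are divergence-free — hence actually the right side already vanishes, or one integrates by parts back). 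Either way we conclude
\[
  \int_\Omega \paren[\big]{\chi(\omega,x) - \nabla u^0(x)}\cdot g(\omega)\,\td P = 0
\]
for a.e.\ $x \in S$, for every such $g$. Lemma~\ref{prva} then yields, for a.e.\ fixed $x$, a function $u^1(x,\cdot) \in \stochsobolev^{1,2}(\Omega)$ with $\chi(\omega,x) - \nabla u^0(x) = \nabla_\omega u^1(x,\omega)$.

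It remains to check measurability and $L^2$-integrability of $x\mapsto u^1(x,\cdot)$, i.e.\ that the pointwise-in-$x$ construction assembles into an element of $L^2(S,\stochsobolev^{1,2}(\Omega))$. This follows by a standard argument: normalize $u^1(x,\cdot)$ (say by subtracting its $\Omega$-average so it is determined uniquely, the seminorm $\norm\cdot_{\#,1,2}$ becoming a norm on this quotient), observe that the assignment $x\mapsto \nabla_\omega u^1(x,\cdot) = \chi(\cdot,x)-\nabla u^0(x)$ is measurable into $L^2(\Omega,\mr^n)$ since $\chi \in L^2(\Omega\times S,\mr^n)$, and use that the seminorm of $u^1(x,\cdot)$ equals $\norm{\chi(\cdot,x)-\nabla u^0(x)}_{L^2(\Omega,\mr^n)}$, which is in $L^2(S)$. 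The main obstacle, and the only place requiring care, is the integration-by-parts/limit-passage step: one must verify rigorously that $x\mapsto g(T_{\eps^{-1}x}\omega)$ is (weakly) differentiable along the fixed typical trajectory with spatial gradient $\eps^{-1}(\nabla_\omega g)(T_{\eps^{-1}x}\omega)$ — this uses the identification \eqref{eq:sobolevrealizations} of stochastic Sobolev functions with their realizations and the smoothness $g\in\stochsmooth(\Omega,\mr^n)$ — and that all the resulting terms have the integrability needed to invoke the ergodic theorem and the definition of $\wtwoscale$ for the limit passage.
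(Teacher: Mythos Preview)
Your proof is correct and is exactly the argument the paper has in mind: the paper's own proof reads in full ``the statement follows immediately from the previous lemma in the same way as in the periodic case (see \cite{Allaire-02})'', and you have simply spelled out that periodic-case argument, namely testing $\nabla u^\eps$ against $\phi(x)g(T_{\eps^{-1}x}\omega)$ with $\div_\omega g=0$, integrating by parts (no $\eps^{-1}$ term appears), passing to the limit, and invoking Lemma~\ref{prva}. One small slip: in the parenthetical you claim $\int_\Omega g=0$ ``since constants are divergence-free'', but this does not follow (a nonzero constant vector field is divergence-free); the correct route is the one you also indicate, namely to integrate by parts back in $x$ on the limit, which gives $\int_\Omega\int_S \nabla u^0\cdot g\,\phi$ and hence the desired orthogonality of $\chi-\nabla u^0$ to every divergence-free $g$.
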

\begin{proof}
The statement follows immediately from the previous lemma in the same way as in the periodic case (see \cite{Allaire-02}). 
\end{proof}
		
Similar results hold for second gradients, at least for $n=2$ which the next two lemmas prove. 
\begin{lemma} \label{lem:orthosecondgradient}
Let $f \in L^2 (\Omega, \mr^{2 \times 2}_{\sym})$ be such that
\[
  \int_{\Omega} f\cdot \cof \nabla_{\omega}  g=0, \quad \forall g \in \mathcal{C}^{\infty} (\Omega,\mr^2).
\]
Then there exists $\psi \in \mathcal{W}^{2,2} (\Omega)$ such that 
$f=\nabla^2_{\omega} \psi$.
\end{lemma}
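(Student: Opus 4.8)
The plan is to strip the hypothesis down to two first--order conditions on the entries of $f$ and then apply the purely stochastic first--order theory (Lemma~\ref{prva} together with Theorem~\ref{thm:firstordercomp}) twice, in a chain, to produce the potential $\psi$.

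\textbf{Reduction to first order.} Rather than testing against all $g\in\mathcal{C}^{\infty}(\Omega,\mr^2)$ at once, I would first test only against $g=(\phi,0)$ and $g=(0,\phi)$ with $\phi\in\mathcal{C}^{\infty}(\Omega)$. Computing $\cof\nabla_{\omega}g$ in these two cases and using the symmetry $f_{12}=f_{21}$, one gets, writing $\nabla_{\omega}^{\perp}\phi:=(D_{2}\phi,-D_{1}\phi)$,
\[
  \int_{\Omega}f\cdot\cof\nabla_{\omega}(0,\phi)=\int_{\Omega}(f_{11},f_{12})\cdot\nabla_{\omega}^{\perp}\phi,
  \qquad
  \int_{\Omega}f\cdot\cof\nabla_{\omega}(\phi,0)=-\int_{\Omega}(f_{12},f_{22})\cdot\nabla_{\omega}^{\perp}\phi .
\]
Hence the hypothesis forces each of the $L^{2}(\Omega,\mr^{2})$--fields $(f_{11},f_{12})$ and $(f_{12},f_{22})$ to be orthogonal to every rotated stochastic gradient $\nabla_{\omega}^{\perp}\phi$, i.e.\ each is ``curl--free'' in the stochastic sense, $D_{1}f_{12}-D_{2}f_{11}=0$ and $D_{1}f_{22}-D_{2}f_{12}=0$ weakly on $\Omega$.

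\textbf{Extracting the potentials and chaining.} By the two--dimensional stochastic Helmholtz decomposition of $L^{2}(\Omega,\mr^{2})$ into potential and solenoidal parts (Theorem~\ref{thm:firstordercomp}; recall that in dimension two the rotation by $\pi/2$ interchanges the two subspaces, so a curl--free field is a stochastic gradient), there exist $a,b\in\mathcal{W}^{1,2}(\Omega)$ with $(f_{11},f_{12})=\nabla_{\omega}a$ and $(f_{12},f_{22})=\nabla_{\omega}b$. Comparing the middle entries gives $D_{2}a=f_{12}=D_{1}b$, so the field $(a,b)\in\mathcal{W}^{1,2}(\Omega,\mr^{2})$ is again curl--free; applying the same first--order result once more yields $\psi$ with $\nabla_{\omega}\psi=(a,b)$. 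Since $a,b\in\mathcal{W}^{1,2}(\Omega)$ we have $\nabla_{\omega}\psi\in\mathcal{W}^{1,2}(\Omega,\mr^{2})$, so all second stochastic derivatives of $\psi$ lie in $L^{2}(\Omega)$, and approximating $\psi$ by smooth functions through the mollification used to define the spaces $\mathcal{W}^{k,2}(\Omega)$ shows $\psi\in\mathcal{W}^{2,2}(\Omega)$. Finally $\nabla^{2}_{\omega}\psi=\nabla_{\omega}(a,b)$ has entries $D_{1}a=f_{11}$, $D_{2}a=D_{1}b=f_{12}$, $D_{2}b=f_{22}$, i.e.\ $f=\nabla^{2}_{\omega}\psi$.

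\textbf{Main obstacle.} The delicate point is the passage from ``curl--free'' to ``genuine stochastic gradient'' in the second step. Unlike the classical Poincar\'e lemma, in the purely stochastic setting a solenoidal (equivalently, a weakly curl--free) field need not be a stochastic gradient: it is one only modulo the subspace of fields that are simultaneously solenoidal and curl--free, so the argument a priori recovers $(f_{11},f_{12})$ and $(f_{12},f_{22})$ as stochastic gradients only up to constant vector fields. Controlling this ``harmonic'' part is precisely the content of the first--order structure results recalled in the appendix, and is the structural reason the lemma is stated for $n=2$, with symmetric $f$ and with the full family $\{\cof\nabla_{\omega}g\}$ of test fields; tracking these constants (and using symmetry of $f$, respectively the normalization of $f$ under which the lemma is applied) is what ultimately pins $\psi$ down in $\mathcal{W}^{2,2}(\Omega)$. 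A secondary, routine check is that the bootstrap genuinely lands in the completion $\mathcal{W}^{2,2}(\Omega)$ and not merely in a space of functions with distributional Hessian in $L^{2}$.
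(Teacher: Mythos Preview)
The paper's proof is a one-line reference to the second-order Helmholtz decomposition (Theorem~\ref{thm:secorderdecomp}): the hypothesis says precisely that $f\in L^2_{\mathrm{ppot}}(\Omega)$, and the decomposition then places the mean-zero part of $f$ in $F^2_{\mathrm{ppot}}(\Omega)=\adh_{L^2}\{\nabla_\omega^2 b\}$, i.e.\ in the range of $\nabla_\omega^2$ on $\mathcal{W}^{2,2}(\Omega)$. Your route is genuinely different: you try to manufacture the second-order statement by chaining two applications of the first-order Lemma~\ref{prva}/Theorem~\ref{thm:firstordercomp}.

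The gap is in the second link of the chain, and it is not the ``constants'' issue you flag. After the first step you obtain potentials $a,b\in\mathcal{W}^{1,2}(\Omega)$ with $\nabla_\omega a=(f_{11},f_{12})$ and $\nabla_\omega b=(f_{12},f_{22})$, and indeed $D_2a=D_1b$ in $L^2(\Omega)$. You then want to apply the first-order theory again to the field $(a,b)$ to produce $\psi$ with $\nabla_\omega\psi=(a,b)$. But Lemma~\ref{prva} and Theorem~\ref{thm:firstordercomp} take as input a vector field in $L^2(\Omega,\mr^2)$, whereas $a,b$ live only in the completion $\mathcal{W}^{1,2}(\Omega)$; they are equivalence classes for the seminorm $\|\nabla_\omega\cdot\|_{L^2}$ and in general do \emph{not} admit $L^2(\Omega)$ representatives (this is the familiar failure of stochastic correctors to be stationary $L^2$ functions). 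So the second application is not licensed, and your ``secondary, routine check'' is in fact the heart of the matter.

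Closing this gap amounts to showing that every symmetric $L^2$ matrix field whose rows are (limits of) stochastic gradients and satisfy the compatibility $D_2a=D_1b$ lies in the $L^2$-closure of Hessians. That is exactly the density statement $F^2_{\mathrm{ppot}}(\Omega)=\adh_{L^2}\{\nabla_\omega^2 b\}$ in Theorem~\ref{thm:secorderdecomp}, which the paper invokes and which is proved by adapting the first-order argument of \cite{gloria1} at the level of Hessians, not by iterating the first-order result as a black box. If you want to salvage your approach, you would have to work with approximations $a_n,b_n\in\mathcal{C}^\infty(\Omega)$ and control the defect $D_2a_n-D_1b_n\to 0$ well enough to produce $\psi_n$ with $\nabla_\omega^2\psi_n\to f$; doing this carefully reproduces the proof of Theorem~\ref{thm:secorderdecomp} rather than bypassing it.
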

\begin{proof}
It is an immediate consequence of Theorem~\ref{thm:secorderdecomp}.
\end{proof} 	
\begin{lemma}\label{lem:twoscalesecondgradient}
Let $(u^\eps)$ be a bounded sequence in $W^{2,2}(S)$, where $S \subset \mathbf{R}^2$ a bounded domain. Then on a subsequence (not relabeled) 
$u^\eps \weakly u^0$ in $W^{2,2}(S)$ and there exists $u^1 \in L^2( S, \stochsobolev^{2,2}(\randomspace))$ 
such that
\begin{align*}	
  \nabla^2 u^\eps \wtwoscale \nabla^2 u^0 + \nabla^2_\omega u^1\,.
\end{align*}
\end{lemma}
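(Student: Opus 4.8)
The plan is to mirror the proof of Lemma~\ref{lem:twoscalegradient}, with Lemma~\ref{lem:orthosecondgradient} taking the place of Lemma~\ref{prva}. First I would extract the subsequence: since $(u^\eps)$ is bounded in the Hilbert space $W^{2,2}(S)$, along a subsequence $u^\eps\weakly u^0$ in $W^{2,2}(S)$, and applying Proposition~\ref{lem:compactness} to each component of $\nabla^2u^\eps$ (bounded in $L^2(S)$) we may pass to a further subsequence with $\nabla^2u^\eps\wtwoscale H$ for some $H\in L^2(\Omega\times S,\mathbf{R}^{2\times2}_{\sym})$; the symmetry of $H$ is inherited from that of the Hessians $\nabla^2u^\eps$. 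Testing this two-scale convergence against $\omega$-independent fields $\Phi\in C^\infty_0(S,\mathbf{R}^{2\times2})$ (i.e.\ $1\otimes\Phi_{ij}$) and comparing with the weak $L^2(S)$-convergence $\nabla^2u^\eps\weakly\nabla^2u^0$ (which follows from $u^\eps\weakly u^0$ in $W^{2,2}(S)$ by boundedness of $\nabla^2$) identifies the mean: $\int_\Omega H(\omega,x)\,\drandommeasure=\nabla^2u^0(x)$ for a.e.\ $x\in S$.

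The heart of the argument is the identification of the oscillatory part of $H$ by testing against cofactor fields, exactly as in the first-order case one tests against stochastically divergence-free fields. Fix $\phi\in C^\infty_0(S)$ and $g\in\mathcal{C}^\infty(\Omega,\mathbf{R}^2)$; each entry of $\cof\nabla_\omega g$ lies in $\mathcal{C}^\infty(\Omega)$, so $\phi(x)[\cof\nabla_\omega g]_{ij}(\omega)\in\mathcal{C}^\infty(\Omega)\otimes C^\infty_0(S)$ is an admissible two-scale test field, and on the one hand
\[
  \int_S(\nabla^2u^\eps)_{ij}(x)\,\phi(x)\,[\cof\nabla_\omega g]_{ij}(T_{\eps^{-1}x}\omega)\,\td x\ \longrightarrow\ \int_S\phi(x)\int_\Omega H(\omega,x)\cdot\cof\nabla_\omega g(\omega)\,\drandommeasure\,\td x
\]
as $\eps\to0$ (summation over $i,j$). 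On the other hand I would integrate by parts once in $x_j$ — legitimate since $\phi$ is compactly supported — to rewrite the left-hand side as $-\int_S(\partial_i u^\eps)\,\partial_{x_j}\!\big(\phi(x)[\cof\nabla_\omega g]_{ij}(T_{\eps^{-1}x}\omega)\big)\,\td x$; expanding the $x_j$-derivative, the only a priori singular contribution is $-\eps^{-1}\int_S(\partial_i u^\eps)\,\phi\,(D_j[\cof\nabla_\omega g]_{ij})(T_{\eps^{-1}x}\omega)\,\td x$, which vanishes identically by the stochastic Piola identity $\sum_j D_j[\cof\nabla_\omega g]_{ij}=0$ (for each $i$; immediate from the commutativity of $D_1,\dots,D_n$). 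The remaining $O(1)$ term $-\int_S(\partial_i u^\eps)(\partial_j\phi)[\cof\nabla_\omega g]_{ij}(T_{\eps^{-1}x}\omega)\,\td x$ passes to the limit $0$, because $\partial_i u^\eps\to\partial_i u^0$ strongly in $L^2_{\textrm{loc}}(S)$ by the Rellich theorem (applied on a bounded Lipschitz subdomain containing $\supp\phi$), while $[\cof\nabla_\omega g]_{ij}(T_{\eps^{-1}x}\omega)\weakly\int_\Omega[\cof\nabla_\omega g]_{ij}\,\drandommeasure=0$ weakly in $L^2_{\textrm{loc}}(S)$ by Theorem~\ref{thmergodic}. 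Comparing the two computations gives $\int_\Omega H(\omega,x)\cdot\cof\nabla_\omega g(\omega)\,\drandommeasure=0$ for a.e.\ $x\in S$ and all $g\in\mathcal{C}^\infty(\Omega,\mathbf{R}^2)$.

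Since $\int_\Omega\cof\nabla_\omega g\,\drandommeasure=0$, this relation is unchanged upon replacing $H(\cdot,x)$ by $f_x:=H(\cdot,x)-\nabla^2u^0(x)\in L^2(\Omega,\mathbf{R}^{2\times2}_{\sym})$, so Lemma~\ref{lem:orthosecondgradient} provides, for a.e.\ $x$, a function $u^1(x,\cdot)\in\mathcal{W}^{2,2}(\Omega)$ with $f_x=\nabla^2_\omega u^1(x,\cdot)$. To promote this pointwise-in-$x$ assignment to an element of $L^2(S,\mathcal{W}^{2,2}(\Omega))$ I would use that $\nabla^2_\omega$ is, by construction of $\mathcal{W}^{2,2}(\Omega)$, an isometry onto a closed subspace of $L^2(\Omega,\mathbf{R}^{2\times2}_{\sym})$, hence has a bounded linear inverse there; applying it to the (Bochner-)measurable map $x\mapsto f_x$ makes $u^1:S\to\mathcal{W}^{2,2}(\Omega)$ measurable with $\int_S\|u^1(x,\cdot)\|_{\mathcal{W}^{2,2}(\Omega)}^2\,\td x=\int_S\|f_x\|_{L^2(\Omega)}^2\,\td x<\infty$. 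Then $\nabla^2u^\eps\wtwoscale\nabla^2u^0+\nabla^2_\omega u^1$, as claimed.

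I expect the genuinely delicate point to be the integration by parts: it must be performed in the slot that turns the $\eps^{-1}$-contribution into the row-divergence of $\cof\nabla_\omega g$ — which is annihilated by Piola — whereas integrating by parts in the other slot, or a second time down to $u^\eps$ itself, would resurrect a singular term (the column-divergence of $\cof\nabla_\omega g$ does not vanish). The remaining ingredients (the mean identification, the measurable selection of $u^1$, and the fact that $\supp\phi$ is compactly contained in $S$, so that only local Rellich compactness is needed and no regularity of $\partial S$ is required) are routine. One could alternatively deduce the statement from Lemma~\ref{lem:twoscalegradient} applied to $\nabla u^\eps$, together with the symmetry of $\nabla^2u^\eps$ and a Helmholtz-type characterization of curl-free stochastic fields, but the direct route above parallels the first-order proof most closely.
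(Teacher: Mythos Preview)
Your proof is correct and follows the same overall strategy as the paper: extract a two-scale limit, show it is orthogonal (in $\omega$) to every $\cof\nabla_\omega g$, and invoke Lemma~\ref{lem:orthosecondgradient}. The execution differs in one point worth noting. The paper moves the cofactor over to the $x$-side, writing
\[
  \int_S\bigl(\nabla^2u^\eps-\nabla^2u^0\bigr)\cdot(\cof\nabla_\omega g)(T_{\eps^{-1}x}\omega)\,\varphi
  =\int_S\cof\bigl(\nabla^2u^\eps-\nabla^2u^0\bigr)\cdot\eps\nabla\!\bigl(g(T_{\eps^{-1}x}\omega)\varphi\bigr)
  -\eps\int_S\cof(\cdots)\cdot\bigl[g\otimes\nabla\varphi\bigr],
\]
and then uses the \emph{spatial} Piola identity $\div\cof\nabla' v=0$ (with $v=\nabla'u^\eps-\nabla'u^0$) to make the first integral vanish identically for every $\eps$; the second carries an explicit $\eps$ and dies by boundedness. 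You instead integrate by parts once in $x_j$ and use the \emph{stochastic} Piola identity $\sum_jD_j[\cof\nabla_\omega g]_{ij}=0$ to kill the singular $\eps^{-1}$ contribution, leaving an $O(1)$ remainder that you dispose of via Rellich compactness ($\partial_iu^\eps\to\partial_iu^0$ strongly in $L^2_{\mathrm{loc}}$) against the weak-to-zero ergodic limit of $[\cof\nabla_\omega g]_{ij}(T_{\eps^{-1}x}\omega)$.

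Both routes are valid. The paper's is slightly more economical---no compactness step is needed and the argument works directly with the difference $\nabla^2u^\eps-\nabla^2u^0$---while yours has the minor advantage of making the role of the stochastic Piola identity explicit and of spelling out the measurable selection of $u^1$ via the isometry $\nabla_\omega^2:\mathcal W^{2,2}(\Omega)\to L^2(\Omega,\mathbf R^{2\times2}_{\sym})$, which the paper leaves implicit. Your remark that the integration by parts must land on the row-divergence of $\cof\nabla_\omega g$ (the column-divergence does not vanish) is correct and a useful caution.
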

\begin{proof}
By Prop.~\ref{lem:compactness} there exists $f \in L^2(\Omega \times S, \mr^{2\times2})$ and a subsequence with
\[
  \nabla^2 u^\eps \wtwoscale \nabla^2 u^0 + f.
\]
Since $ \nabla^2 u^\eps - \nabla^2 u^0 \in\mr^{2\times 2}_{\sym}$ almost everywhere on $S$, we get $f \in \mr^{2\times2}_{\sym}$ almost everywhere on $\Omega \times S$. Thus by Lemma~\ref{lem:orthosecondgradient} it suffices to show that  for almost every  $x \in S$ we have
\[
  \int_{\Omega} f( x,\omega )\cdot \cof \nabla_{\omega}  g (\omega)=0, \quad \forall g \in \mathcal{C}^{\infty} (\Omega,\mr^2).
\]
For this fix some $g \in \stochsmooth(\Omega,\mr^2)$ and $\varphi \in C^\infty_0(S)$. Then by definition of two-scale convergence we have
\begin{align*}	
  &\int_S \int_{\Omega} f(\omega, x)\cdot \cof \nabla_{\omega}  g(\omega) \varphi(x) \drandommeasure \td x \\
&= \lim_{\eps \downarrow 0} \int_S  \paren[\big]{\nabla^2 u^\eps(x) - \nabla^2 u^0(x)}\cdot (\cof \nabla_{\omega}  g)(T_{\eps^{-1}x} \omega) \varphi(x)  \td x \\
&= \lim_{\eps \downarrow 0} \int_S  \cof\paren[\big]{\nabla^2 u^\eps(x) - \nabla^2 u^0(x)}\cdot  \eps\nabla  \paren[\Big]{g(T_{\eps^{-1}x} \omega) \varphi(x)}  \td x \\
&\quad- \lim_{\eps \downarrow 0} \eps\int_S  \cof\paren[\big]{\nabla^2 u^\eps(x) - \nabla^2 u^0(x)}\cdot \brackets[\Big]{   g(T_{\eps^{-1}x} \omega)\otimes \nabla\varphi(x)  }\td x.
\end{align*}

The first term vanishes identically, since $\div \cof \nabla v = 0$ distributionally for all $v \in W^{1,2}(S, \mr^2)$, while the second one vanishes by the uniform bound on the integral. Since $\varphi$ was arbitrary, the claim follows.
\end{proof}
In the periodic case the purely oscillatory two-scale convergence turns out to be a good concept (see e.g., \cite{Horneuvel12}). 
The test functions considered there were fast oscillating periodic functions with vanishing mean value.
Since this implies in the periodic case a predictable rate of convergence, strong results have been obtained. We have to rely on Birkhoff's Ergodic Theorem (Theorem~\ref{thmergodic}),
which cannot provide such information. Instead we focus on derivatives of test functions, which naturally have vanishing mean
value. The following lemma states that we do not lose information by restricting ourselves to this smaller class of functions.
\begin{lemma}\label{lem:divdense}The set 
$ \set*{  \div_{\omega} v: \; v \in \stochsmooth(\Omega, \mr^n)}$ is dense in
\[
  \set*{ b \in L^2(\Omega) \Big| \int_\Omega b(\omega) \drandommeasure = 0 },
\]
with respect to the strong $L^2(\Omega)$ topology.
\end{lemma}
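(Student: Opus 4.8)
The plan is to argue by duality: a subspace $M$ of a Hilbert space $H$ is dense if and only if its orthogonal complement $M^\perp$ is trivial. Here $H$ is the closed subspace $\set{b \in L^2(\Omega) : \int_\Omega b \, \td P = 0}$ of mean-zero functions, and $M$ is the claimed dense set $\set{\div_\omega v : v \in \stochsmooth(\Omega,\mr^n)}$. So I would take $f \in L^2(\Omega)$ with $\int_\Omega f \, \td P = 0$ satisfying $\int_\Omega f \, \div_\omega v = 0$ for every $v \in \stochsmooth(\Omega,\mr^n)$, and aim to show $f = 0$.

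The key step is to feed this into Lemma~\ref{prva}. For $f$ (viewed, say, as $f e_1$, or more naturally one wants to test against scalar-valued $f$ against divergences) the hypothesis $\int_\Omega f \div_\omega v = 0$ for all smooth $v$ is exactly a statement about the pairing of $f$ with a divergence; the right move is to consider the vector field $f \cdot (\text{something})$. More precisely, I would apply Lemma~\ref{prva} in a slightly adapted form: integrating by parts, $\int_\Omega f \,\div_\omega v = -\int_\Omega \nabla_\omega f \cdot v$ when $f \in W^{1,2}(\Omega)$, but since we do not know $f$ is differentiable a priori we instead read the hypothesis as saying that $f$, as a distribution, has vanishing stochastic gradient tested against all smooth $v$ — hence $\nabla_\omega f = 0$ in $\mathcal{W}^{-1,2}$. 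Combined with ergodicity (Definition~\ref{defergodic}), a function with vanishing stochastic gradient is constant $P$-a.e., and the mean-zero constraint $\int_\Omega f \, \td P = 0$ forces $f \equiv 0$. Alternatively, and more in the spirit of the cited machinery, one applies Lemma~\ref{prva} to the constant vector field: any $g \in \stochsmooth(\Omega,\mr^n)$ with $\div_\omega g = 0$ can be approximated, and $\int_\Omega f \, \div_\omega v = 0$ for all $v$ says $f$ is $L^2$-orthogonal to the closure of $\ran(\div_\omega)$; by the Helmholtz-type decomposition underlying Theorem~\ref{thm:firstordercomp}, the orthogonal complement of $\overline{\ran(\div_\omega)}$ inside $L^2(\Omega)$ is precisely the kernel of $\nabla_\omega$, i.e. the constants, and again the mean-zero condition kills it.

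Concretely the cleanest route is: Step 1, observe that $M \subseteq H$ since $\int_\Omega \div_\omega v \, \td P = 0$ for all smooth $v$ (this follows from $\int_\Omega D_i g = 0$, which is the $g \equiv 1$ case of the integration-by-parts characterization of $D_i$ recalled in the text). Step 2, let $f \perp M$ in $L^2(\Omega)$; then $\int_\Omega f \, \div_\omega v = 0$ for all $v \in \stochsmooth(\Omega,\mr^n)$. Step 3, take a constant vector $c \in \mr^n$ and note the scalar identity shows $f$ has, in the weak sense, $\nabla_\omega f = 0$; invoke ergodicity to conclude $f = \mathrm{const}$. Step 4, since $f \in H$ has mean zero, $f = 0$. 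Therefore $M^\perp \cap H = \{0\}$, so $\overline{M} = H$.

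I expect the main obstacle to be the technical point in Step 3: making rigorous the passage from "$\int_\Omega f \div_\omega v = 0$ for all smooth $v$" to "$f$ is constant" without assuming regularity of $f$. The safe way is to use the smoothening operator $f \mapsto f * K$ from the text: $f*K \in \stochsmooth(\Omega)$, and one checks $D_i(f * K) = f * (\partial_i K)$ together with $\int_\Omega (f*K) \div_\omega v = \int_\Omega f \, \div_\omega(v * \check K) = 0$, so $f * K$ has vanishing gradient, hence is constant by ergodicity; letting $K$ approximate the identity (in the sense that $f * K \to f$ in $L^2(\Omega)$), $f$ is a limit of constants, hence constant, and mean-zero, hence $0$. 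This mollification argument, plus a line citing Lemma~\ref{prva}/Theorem~\ref{thm:firstordercomp} to organize the orthogonality cleanly, is essentially the whole proof.
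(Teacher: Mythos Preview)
Your argument is correct. The paper itself gives no proof and simply cites \cite{zhikov1}[Lemma~2.5]; the duality-plus-ergodicity argument you outline (orthogonality to all $\div_\omega v$ $\Rightarrow$ $\nabla_\omega f = 0$ weakly $\Rightarrow$ $f$ constant by ergodicity $\Rightarrow$ $f=0$ by the mean-zero constraint) is exactly the standard proof, and your mollification paragraph handles the only genuine technical point (passing from the weak condition to $\nabla_\omega f = 0$ without a priori regularity on $f$) cleanly.

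Two minor expositional remarks. First, the digression through Lemma~\ref{prva} is a red herring: that lemma goes in the opposite direction (orthogonal to solenoidal fields $\Rightarrow$ gradient), and you correctly abandon it. Second, in your Step~3 the phrase ``take a constant vector $c \in \mr^n$'' is not what you want; the right move is to take $v = g\,e_i$ for scalar $g \in \stochsmooth(\Omega)$, which gives $\int_\Omega f\, D_i g = 0$ for all $i$ and all $g$, i.e.\ $D_i f = 0$ weakly. With those cosmetic fixes the proof is complete and self-contained.
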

\begin{proof}
See \cite{zhikov1}[Lemma~2.5].
\end{proof}


The following lemma is needed for proving Lemma~\ref{lem:(31)}, which is essential for proving Theorem~\ref{theoremlowbou}.

\begin{lemma}\label{lem:product}
Let $(f^\eps) \subset W^{1,2}(S)$, $(g^\eps) \subset W^{1,2}(S)$ uniformly bounded in these spaces, and converging weakly in $W^{1,2}(S)$ to $f$ respectively $g$. Assume further that 
\[
  \limsup_{\eps\downarrow 0}\frac 1 \eps \norm { f^\eps g^\eps}_{L^1} < \infty,
\]
and that there exist  $\phi^f, \phi^g \in L^2(S, \stochsobolev^{1,2}(\Omega))$  with
\[
   \nabla' f^\eps \wtwoscale \nabla' f^0 + \nabla_\omega \phi^f, \qquad
  \nabla' g^\eps \wtwoscale \nabla' g^0 + \nabla_\omega \phi^g.
\]
Then for every $v \in \stochsmooth(\Omega,\mr^2)$ and $\varphi \in C^\infty_0(S)$ we have

\begin{eqnarray} \label{borism1}
& &\int_S \frac {f^\eps g^\eps (x)}{\eps} (\div_{\omega} v)(T_{\eps^{-1}x} \omega) \varphi(x) \td x \to \\ \nonumber & & \hspace{+10ex}\int_{\Omega \times S}  \paren[\Big]{ \phi^f(\omega, x) \cdot g(x) + f(x) \cdot  \phi^g(\omega, x)}\div_{\omega}  v(\omega) \varphi(x) \td x \drandommeasure.  
\end{eqnarray}
\end{lemma}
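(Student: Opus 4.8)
\textbf{Proof plan for Lemma~\ref{lem:product}.}

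The plan is to reduce the product statement to a linear statement that can be handled by two-scale convergence of gradients, exploiting the factor $1/\eps$ together with an integration by parts in $\omega$ that trades the stochastic divergence $\div_\omega v$ against $\nabla_\omega v$, and then recognizing the resulting integrals via the hypotheses on $\nabla' f^\eps$ and $\nabla' g^\eps$. The crucial algebraic identity is the product rule $\nabla'(f^\eps g^\eps) = g^\eps \nabla' f^\eps + f^\eps \nabla' g^\eps$, so that, heuristically, $\nabla'(f^\eps g^\eps) \wtwoscale g(\nabla'f^0 + \nabla_\omega \phi^f) + f(\nabla'g^0 + \nabla_\omega\phi^g)$ by Proposition~\ref{propositionreasonable}(2) (once one knows $f^\eps, g^\eps$ converge strongly in $L^2$, which follows from Rellich's theorem in $W^{1,2}(S)$, modulo the issue that these are not bounded in $L^\infty$ — see below). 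The point of the lemma is then that testing $\nabla'(f^\eps g^\eps)/\eps$ against a fast-oscillating $v(T_{\eps^{-1}x}\omega)\varphi(x)$ and integrating by parts in $x$ moves the $\eps$-derivative onto $v$, producing $\eps^{-1}\cdot\eps = 1$ times $(\nabla_\omega v)(T_{\eps^{-1}x}\omega)$ plus a term with $\nabla\varphi$ that carries a genuine factor $\eps$ and hence vanishes using $\limsup_\eps \eps^{-1}\|f^\eps g^\eps\|_{L^1} < \infty$.

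Concretely, I would proceed as follows. First, fix $v \in \stochsmooth(\Omega,\mr^2)$ and $\varphi \in C^\infty_0(S)$, and write $w^\eps(x) := v(T_{\eps^{-1}x}\omega)\varphi(x)$. Since $v \in \mathcal{C}^\infty(\Omega,\mr^2)$, its realization is differentiable and $\nabla' w^\eps(x) = \eps^{-1}(\nabla_\omega v)(T_{\eps^{-1}x}\omega)\varphi(x) + v(T_{\eps^{-1}x}\omega)\otimes\nabla'\varphi(x)$; in particular $(\div_\omega v)(T_{\eps^{-1}x}\omega)\varphi(x) = \eps\,\div' w^\eps(x) - \eps\, v(T_{\eps^{-1}x}\omega)\cdot\nabla'\varphi(x)$. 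Substituting this into the left-hand side of \eqref{borism1} and integrating by parts (legitimate since $f^\eps g^\eps \in W^{1,1}(S)$, being a product of $W^{1,2}$ functions, and $w^\eps$ has compact support):
\begin{align*}
\int_S \frac{f^\eps g^\eps}{\eps}(\div_\omega v)(T_{\eps^{-1}x}\omega)\varphi \td x
&= \int_S f^\eps g^\eps \, \div' w^\eps \td x - \int_S f^\eps g^\eps\, v(T_{\eps^{-1}x}\omega)\cdot\nabla'\varphi \td x\\
&= -\int_S \nabla'(f^\eps g^\eps)\cdot w^\eps \td x - \int_S f^\eps g^\eps\, v(T_{\eps^{-1}x}\omega)\cdot\nabla'\varphi \td x.
\end{align*}
The last integral is bounded by $\|v\|_{L^\infty(\Omega)}\|\nabla'\varphi\|_{L^\infty}\|f^\eps g^\eps\|_{L^1}$, which is $O(\eps)$ by hypothesis, so it tends to $0$. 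For the remaining term, expand $\nabla'(f^\eps g^\eps) = g^\eps\nabla'f^\eps + f^\eps\nabla'g^\eps$; then
$-\int_S (g^\eps\nabla'f^\eps + f^\eps\nabla'g^\eps)\cdot v(T_{\eps^{-1}x}\omega)\varphi(x)\td x$ should converge to $-\int_{\Omega\times S}(g(x)(\nabla'f^0(x)+\nabla_\omega\phi^f) + f(x)(\nabla'g^0(x)+\nabla_\omega\phi^g))\cdot v(\omega)\varphi(x)\drandommeasure\td x$, and then one integrates by parts in $\omega$ (using the definition $\int_\Omega \nabla_\omega\psi\cdot v = -\int_\Omega \psi\,\div_\omega v$ and the fact that the classical-gradient terms $\nabla'f^0, \nabla'g^0$ pair to $0$ against $\div_\omega v$ since they are $\omega$-independent and $\int_\Omega \div_\omega v = 0$) to arrive at the right-hand side of \eqref{borism1}.

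The main obstacle — and the step requiring genuine care — is justifying the convergence $g^\eps\nabla'f^\eps \wtwoscale g(\nabla'f^0+\nabla_\omega\phi^f)$ (and the symmetric statement), i.e.\ passing the strong limit of $g^\eps$ into a weak two-scale limit. Proposition~\ref{propositionreasonable}(2) is stated for $g^\eps$ uniformly bounded in $L^\infty(S)$ and strongly convergent in $L^1(S)$, which we do not have here for general $S\subset\R^2$; however $g^\eps\to g$ strongly in $L^2(S)$ by Rellich, and $\nabla'f^\eps$ is bounded in $L^2(S)$. The cleanest fix is a truncation argument: write $g^\eps = T_M g^\eps + (g^\eps - T_M g^\eps)$ where $T_M$ is truncation at level $M$; apply Proposition~\ref{propositionreasonable}(2) to the bounded part $T_M g^\eps$ (which still converges strongly in $L^1$), and control the tail by $\|\nabla'f^\eps\|_{L^2}\|g^\eps - T_M g^\eps\|_{L^2}\|v\varphi\|_{L^\infty}$, which is uniformly small as $M\to\infty$ since $\|g^\eps - T_M g^\eps\|_{L^2(S)} \le \|g - T_M g\|_{L^2} + o(1) \to 0$. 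Alternatively, since $v\varphi$ is merely a fixed test function one only needs the pairing to converge, not the full two-scale statement, so a direct density/approximation argument on $v$ combined with the strong $L^2$ convergence of $g^\eps$ and the weak $L^2$ convergence of $\nabla'f^\eps$ (after inserting the two-scale limit of $\nabla'f^\eps$) also works. Either way, once this product-of-a-strongly-convergent-and-a-two-scale-convergent-sequence step is in place, the rest is the bookkeeping of the integration by parts in $x$ and in $\omega$ described above.
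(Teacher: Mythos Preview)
Your approach is essentially identical to the paper's: integrate by parts in $x$ to trade $\eps^{-1}(\div_\omega v)(T_{\eps^{-1}x}\omega)$ for $v(T_{\eps^{-1}x}\omega)$ acting on $\nabla'(f^\eps g^\eps)$, use the product rule, kill the $\nabla'\varphi$ term via the $L^1$ bound on $\eps^{-1}f^\eps g^\eps$, pass to the two-scale limit, and integrate by parts in $\omega$. You are in fact more careful than the paper in justifying the product step $g^\eps\nabla' f^\eps \wtwoscale g(\nabla' f^0+\nabla_\omega\phi^f)$; your truncation/approximation argument (or equivalently: split $g^\eps=g+(g^\eps-g)$, the second part goes to zero in $L^1$ against the $L^\infty$ test, then approximate $g$ in $L^2$ by smooth functions) is exactly what is needed.

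There is one genuine slip. Your stated reason for discarding the terms $g\,\nabla' f^0$ and $f\,\nabla' g^0$ --- that they ``pair to $0$ against $\div_\omega v$ since they are $\omega$-independent and $\int_\Omega \div_\omega v=0$'' --- is not what is happening: after the integration by parts in $x$ these terms are paired against $v(\omega)$, not against $\div_\omega v$, and $\int_\Omega v$ need not vanish. The correct reason is that the hypothesis $\limsup_\eps \eps^{-1}\|f^\eps g^\eps\|_{L^1}<\infty$ forces $f^\eps g^\eps\to 0$ in $L^1(S)$, while Rellich gives $f^\eps\to f$, $g^\eps\to g$ strongly in $L^2(S)$ and hence $f^\eps g^\eps\to fg$ in $L^1(S)$; therefore $fg=0$ a.e., so $g\,\nabla' f + f\,\nabla' g=\nabla'(fg)=0$. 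The paper's proof suppresses this point as well, simply writing down the limit without these terms, but this is the reason it is legitimate.
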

\begin{proof}
The proof consists of doing integration by parts 
\begin{align*}	
&\int_S \frac {f^\eps g^\eps (x)}{\eps} (\div_{\omega} v)(T_{\eps^{-1}x} \omega) \varphi(x) \td x \\
& = -\int_S  \nabla' (f^\eps g^\eps)(x)  \cdot v(T_{\eps^{-1}x} \omega) \varphi(x) \td x-\int_S (f^\eps g^\eps)(x) \nabla'\varphi(x) \cdot v(T_{\eps^{-1}x} \omega) \td x \\
& = -\int_S  \paren[\Big]{\nabla' f^\eps(x) \cdot g^\eps(x) + f^\eps(x) \cdot \nabla' g^\eps(x)}  \cdot v(T_{\eps^{-1}x} \omega) \varphi(x) \td x \\
& \hspace{+2ex}-\eps\int_S \frac{(f^\eps g^\eps)}\eps(x) \nabla'\varphi(x) \cdot v(T_{\eps^{-1}x} \omega) \td x  \\
& \quad \to 
 -\int_S\int_\Omega  \paren[\Big]{\nabla_\omega \phi^f(\omega, x) \cdot g(x) + f(x) \cdot \nabla_\omega \phi^g(\omega, x)}\cdot  v(\omega) \varphi(x) \td x \drandommeasure.
\end{align*}  
The claim now follows after integrating by parts once more, this time in $\omega$.
\end{proof}
\begin{remark}\label{borism2}
The right-hand side in \eqref{borism1} actually makes sense only by doing integration by parts since we do not have that $\phi^f(x,\cdot) g(x) + f(x)  \phi^g(x,\cdot) \in L^2(\Omega)$, for a.e. $x \in S$. However, if we would additionally know that there exists $h \in L^2( \Omega \times S)$ such that such that  for all  $v \in \stochsmooth(\Omega,\mr^n)$  and  $\varphi \in C^\infty_0(S)$ we have 
  \[
    -\int_{\Omega \times S}  \left( \nabla_{\omega}\phi^f \cdot g + f \cdot  \nabla_{\omega}\phi^g\right) \cdot v   \varphi\, \td x \,\drandommeasure= \int_{\Omega \times S} h (\div_{\omega} v) \varphi\, \td x\, \drandommeasure,
    \]
then we would be able to conclude, by the closedness property of the operator $\nabla_{\omega}$, that $\phi^f \cdot g + f \cdot \phi^g \in L^2 (\Omega \times S)$. This will be used in the proof of Lemma~\ref{lem:(31)}. 
\end{remark}



We now introduce the `mixed' spaces. The integral of $L^2(\Omega)$-valued functions will be in the sense of Bochner. 

For $A \subset \mr^n$ measurable we denote by $L^2(\Omega \times A)$ the space of measurable functions $f: \Omega \times A \to \mathbf{R}$ that satisfy $\int_A \norm f^2_{L^2(\Omega)} \td x < \infty$. We can also define the space $W^{1,2}(A, L^2 (\Omega))$ in the usual way. 

For the main part of the paper we only need $A = I$, the one-dimensional interval $\brackets*{-\frac 1 2, \frac 1 2}$. In the appendix we will however make use of this more general notion.

In the case $A = I$ we denote by $D_{x_3}$ the derivative in the $x_3$ direction, looked as the operator that maps  
$W^{1,2}(I, L^2 (\Omega))$ to $L^2( \Omega \times I)$. 
We define the space $W^{1,2}(\Omega \times I)$ as the space 
\[
   W^{1,2}( \Omega \times I)=W^{1,2}(I, L^2 (\Omega)) \cap L^2(I,W^{1,2} (\Omega)). 
\]
On the space $W^{1,2}(\Omega \times I)$ we again define the seminorm $\| \cdot\|_{\#,2}$ in the following way
  \[
    \norm u^2_{\#,2}=\norm{D_1 u}^2_{L^2(\Omega \times I)}+\norm{D_2 u}^2_{L^2(\Omega \times I)}+\norm{D_{x_3} u}^2_{L^2(\Omega \times I)}. 
      \]
By $\mathcal{W}^{1,2} ( \Omega \times I)$ we denote the completion of the space $W^{1,2}( \Omega \times I)$ with respect to the seminorm  $\| \cdot\|_{\#,2}$. 
By a density argument it can also be seen as the completion of the space $ \mathcal{C}^{\infty} (\Omega) \otimes C^{\infty} (I)$ with the same norm. We can also naturally define the operators $\nabla$ and $\div$ on these spaces. 
For $\gamma>0$ we also define $\stochsobolev_{\sym,\gamma}^{1,2}(\Omega \times I, \mr^3)$ as the completion of the space $\mathcal{C}^{\infty} (\Omega,\mathbf{R}^3) \otimes C^{\infty} (I,\mathbf{R}^3)$ with respect to the seminorm $\norm \cdot_{\#,\sym,\gamma,2}$ given by
$$
 \norm b_{\#,\sym,\gamma,2}= \|\sym (D_1 b, D_2 b, \tfrac{1}{\gamma} D_{x_3} b) \|_{L^2}, \quad \forall b \in \stochsmooth(\Omega, \mr^3) \otimes C^{\infty} (I, \mr^3).
$$
The following lemma is useful for proving Lemma~\ref{tvrdnja2}.  
\begin{lemma} \label{pomocna1} 
Let $\gamma>0$ and $f \in L^2 (  \Omega \times I, \mr^3)$ be such that 
\begin{align*}
\int_{\Omega \times I} f\cdot g=0, \quad \forall g \in  \mathcal{C}^{\infty} (\Omega,\mr^3) \otimes C^{\infty}_0(I,\mr^3)  \textrm{ that satisfy }\quad\\   D_1 g_1+D_2 g_2+\tfrac{1}{\gamma}D_{x_3} g_3 =0. 
\end{align*}
Then there exists $\psi \in \mathcal{W}^{1,2} ( \Omega \times I)$ such that
\[
   f=(D_1 \psi,  D_2 \psi, \tfrac{1}{\gamma}D_{x_3} \psi). 
\]
\end{lemma}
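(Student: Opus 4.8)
The plan is to recast the statement as an orthogonal‑complement identity in the Hilbert space $H:=L^2(\Omega\times I,\mr^3)$. For $\psi\in\mathcal{C}^\infty(\Omega)\otimes C^\infty(I)$ define the field $\Lambda\psi:=(D_1\psi,D_2\psi,\tfrac1\gamma D_{x_3}\psi)$. Since $\gamma>0$, the quantity $\|\Lambda\psi\|_H^2=\|D_1\psi\|_{L^2}^2+\|D_2\psi\|_{L^2}^2+\tfrac1{\gamma^2}\|D_{x_3}\psi\|_{L^2}^2$ is equivalent to $\|\psi\|_{\#,2}^2$, so $\Lambda$ extends to a bounded, bounded‑below linear map from $\mathcal{W}^{1,2}(\Omega\times I)$ into $H$ (recall that, by the density statement preceding the lemma, $\mathcal{C}^\infty(\Omega)\otimes C^\infty(I)$ is dense in $\mathcal{W}^{1,2}(\Omega\times I)$). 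Hence its range $V:=\Lambda\bigl(\mathcal{W}^{1,2}(\Omega\times I)\bigr)$ is a \emph{closed} subspace of $H$. The conclusion of the lemma is precisely that $f\in V$, and since $V$ is closed this is equivalent to showing $f\perp V^\perp$ in $H$.

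To exploit this, I would identify $V^\perp$. As $\mathcal{C}^\infty(\Omega)\otimes C^\infty(I)$ is dense in $\mathcal{W}^{1,2}(\Omega\times I)$ and $\Lambda$ is continuous, a field $g\in H$ lies in $V^\perp$ iff $\int_{\Omega\times I}\bigl(g_1D_1\psi+g_2D_2\psi+\tfrac1\gamma g_3D_{x_3}\psi\bigr)\,\td x\,\drandommeasure=0$ for every such $\psi$; integrating by parts (the $D_i$ are skew on $L^2(\Omega)$ and $D_{x_3}$ integrates by parts on $I$, picking up an endpoint term), this says that $D_1g_1+D_2g_2+\tfrac1\gamma D_{x_3}g_3=0$ holds weakly on $\Omega\times I$ \emph{together with} the natural zero‑normal‑trace condition $g_3|_{\Omega\times\partial I}=0$. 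On the other hand, the hypothesis of the lemma is exactly that $f$ is orthogonal to the set $W$ of those $g\in\mathcal{C}^\infty(\Omega,\mr^3)\otimes C^\infty_0(I,\mr^3)$ with $D_1g_1+D_2g_2+\tfrac1\gamma D_{x_3}g_3=0$, and clearly $W\subset V^\perp$. Thus the proof reduces to a single density statement: $W$ is dense in $V^\perp$ in $H$. Granting this, $f\perp W$ forces $f\perp V^\perp$, hence $f=\Lambda\psi$ for some $\psi\in\mathcal{W}^{1,2}(\Omega\times I)$, which is the claim; this puts the lemma on the same footing as Lemma~\ref{prva}.

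The remaining task, proving that density, is the crux. After the change of variables $x_3\mapsto\gamma^{-1}x_3$ (which turns the operator into the ordinary mixed divergence $D_1(\cdot)_1+D_2(\cdot)_2+D_{x_3}(\cdot)_3$ on $\Omega\times I_\gamma$), it suffices to approximate in $L^2$ a weakly solenoidal field with vanishing normal trace by fields in $\mathcal{C}^\infty(\Omega,\mr^3)\otimes C^\infty_0(I_\gamma,\mr^3)$ that are exactly solenoidal. Given such a $g$, I would first mollify in $\omega$ by the smoothening operator $g\mapsto g*K$ of the text: it commutes with $D_1$ and $D_2$ (a property of the group action $U(x)$) and acts trivially in $x_3$, hence preserves both the constraint and the boundary condition, produces $\omega$‑smoothness, and converges to $g$ in $H$ as $K\to\delta_0$. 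To gain smoothness and compact support in $x_3$, I would use the vanishing normal trace to extend $g_1,g_2$ evenly and $g_3$ oddly across $\partial I_\gamma$ — which preserves the divergence‑free condition precisely because the normal trace vanishes — then mollify in $x_3$, multiply by a cutoff supported in the interior of $I_\gamma$, and correct the small, $L^2$‑controlled divergence introduced by the cutoff by a Bogovskii‑type operator. Each operation keeps us $L^2$‑close to $g$.

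The main obstacle is exactly this last step: because the operator mixes two stochastic derivatives with one classical one, the classical de Rham/Weyl and Temam‑type density results for solenoidal fields are not directly available and must be reproved with the stochastic derivatives and the interval boundary handled simultaneously — this is the Helmholtz decomposition for the mixed operator announced in the introduction and carried out in the appendix, and the reduction above shows that nothing beyond it is needed. (An alternative is to expand in a basis of $L^2(I_\gamma)$ adapted to $D_{x_3}$: the solenoidal constraint then decouples mode by mode into purely stochastic relations of the form $D_1h_1+D_2h_2+c\,h_3=0$ with $c$ a scalar, each reducible to the purely stochastic decomposition underlying Lemma~\ref{prva}.)
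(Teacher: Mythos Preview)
Your reduction is correct and matches the paper's approach exactly: the paper's proof is the single sentence ``This follows immediately from the decomposition and density result in Theorem~\ref{thm:weyl}'', and what you have written is a careful unpacking of precisely that implication (closed range of $\Lambda$, identification of $V^\perp$ with the mixed solenoidal space $L^2_{\mathrm{sol}}(\Omega\times I)$, and the observation that the lemma is equivalent to the density of the smooth compactly-supported solenoids $W$ in $V^\perp$).

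Where you diverge from the paper is in the \emph{sketch} of how to prove that density. You propose a constructive approximation: mollify in $\omega$, extend by even/odd reflection across $\partial I$, mollify in $x_3$, cut off, and repair the resulting divergence by a Bogovskii-type operator. The paper instead establishes Theorem~\ref{thm:weyl} by a different mechanism: it first proves an orthogonality relation between $L^2_{\mathrm{pot}}$ and $L^2_{\mathrm{sol}}$ (Lemma~\ref{lem:ortho}) via a div--curl/compen\-sated-compactness argument carried out on realizations, using Birkhoff's theorem and the ergodicity of the product system $T\times\widetilde T$ on $\Omega\times[0,1)^M$ (Lemma~\ref{lem:productdynamic}); the decomposition and the density statements then drop out by soft Hilbert-space arguments together with mixed mollification. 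The advantage of the paper's route is that it never needs to \emph{solve} a divergence equation in the mixed setting: your Bogovskii step is the one point that is not standard, because there is no Poincar\'e inequality on $\Omega$ and hence no off-the-shelf right inverse for the stochastic part of $\div$; correcting the cutoff error cannot be done purely in $x_3$ (the error $\chi' g_3$ need not have zero $x_3$-mean as a function of $\omega$), so some genuinely stochastic construction would be required. You correctly flag this as the main obstacle and defer to the appendix, which is fair --- but be aware that the appendix resolves it by the orthogonality route, not by producing the Bogovskii-type operator your sketch invokes.
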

\begin{proof}
This follows immediately from the decomposition and density result in Theorem~\ref{thm:weyl}.
\end{proof}

We will now assume $\eps = \eps(h)$ depends additionally on $h > 0$ and satisfies $\eps(h) \downarrow 0$ if $h \downarrow 0$.
The definition of two-scale convergence extends naturally to sequences $(v^h)_{h > 0}$. We assume further that
\begin{equation}\begin{aligned}\label{def:gamma}
\gamma := \lim_{h\downarrow 0}\frac h {\eps(h)} \in (0,\infty)
\end{aligned}\end{equation}
is well-defined. In the sequel we will often suppress the dependence of $\eps(h)$ on $h$. 

Similar to Lemma~\ref{prva} implying Lemma~\ref{lem:twoscalegradient}, we can prove the following lemma, using Lemma~\ref{pomocna1}. 

\begin{lemma} \label{tvrdnja2}
Let $\gamma > 0$ be given by \eqref{def:gamma} and let $S \subset \mathbf{R}^2$ a bounded domain.  Let $(u^h)$ be a bounded sequence in $ L^2(S\times I) $, such that the sequence of scaled gradients $(\nabla_{h} u^h)$ is bounded in $L^2(S \times I,\mathbf{R}^{3})$. Assume further there exists $u^0 \in W^{1,2}(S \times I)$ such that $u^h \to u^0$ strongly in $L^2(S \times I)$. Then there exists a subsequence $h_k \to 0$, and $u^1 \in L^2(S, \mathcal{W}^{1,2}(\Omega \times I))$  such that 
\[
   \nabla_{h_k } u^{h_k} \wtwoscale (\nabla' u^0, 0) + \paren[\big]{D_1 u^1, D_2 u^1, \tfrac{1}{\gamma}D_{x_3}u^1} . 
\]
\end{lemma}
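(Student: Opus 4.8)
The plan is to follow the route by which Lemma~\ref{lem:twoscalegradient} is obtained from Lemma~\ref{prva}, now with Lemma~\ref{pomocna1} in the role of Lemma~\ref{prva}. First, by the in-plane two-scale compactness (Proposition~\ref{lem:compactness} in the variant of Remark~\ref{borism7}), applied componentwise to the bounded sequence $(\nabla_h u^h)\subset L^2(S\times I,\mr^3)$, we may pass to a subsequence $h_k\to0$ (not relabeled) along which $\nabla_{h_k}u^{h_k}\wtwoscale\xi$ for some $\xi\in L^2(\Omega\times S\times I,\mr^3)$. It then remains to identify $\xi=(\nabla'u^0,0)+\big(D_1u^1,D_2u^1,\tfrac1\gamma D_{x_3}u^1\big)$ for a suitable $u^1\in L^2(S,\mathcal{W}^{1,2}(\Omega\times I))$. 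By Lemma~\ref{pomocna1}, applied slicewise for a.e.\ fixed $x\in S$ to $f(\omega,x_3):=\xi(\omega,x,x_3)-(\nabla'u^0(x,x_3),0)\in L^2(\Omega\times I,\mr^3)$, this reduces to proving
\[
  \int_{\Omega\times I} f(\omega,x_3)\cdot g(\omega,x_3)\,\drandommeasure\,\td x_3=0
\]
for all $g\in\mathcal{C}^\infty(\Omega,\mr^3)\otimes C^\infty_0(I,\mr^3)$ with $D_1g_1+D_2g_2+\tfrac1\gamma D_{x_3}g_3=0$. Since the set of such $g$ is a separable subset of $L^2(\Omega\times I,\mr^3)$ on which $g\mapsto\int f\cdot g$ is continuous, it suffices to establish this identity for each fixed $g$ and a.e.\ $x$; intersecting countably many null sets produces a single null set off which it holds for all admissible $g$, so that Lemma~\ref{pomocna1} applies for a.e.\ $x$.

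The heart of the matter is the following computation. Fix an admissible $g$ and $\varphi\in C^\infty_0(S)$. The obstruction is that $g$ is divergence-free for the limit anisotropy $\gamma$ but not for the $h$-dependent one $\eps(h)/h$; to repair this, replace $g$ by $g^h:=\big(g_1,g_2,\tfrac{h}{\gamma\eps(h)}g_3\big)$, which satisfies $D_1g^h_1+D_2g^h_2+\tfrac{\eps(h)}{h}D_{x_3}g^h_3=0$ exactly, and which converges together with all its derivatives uniformly on $\Omega\times I$ to $g$ as $h\to0$ (because $h/(\gamma\eps(h))\to1$). Testing $\nabla_{h_k}u^{h_k}$ against $g^{h_k}(T_{\eps^{-1}x'}\randomelement,x_3)\varphi(x')$ and integrating by parts in $x'$, and in $x_3$ (no boundary term, since $\supp g_3\subset\subset I$), all the singular $1/\eps$-prefactors collect into the multiple $\tfrac1\eps\big(D_1g^h_1+D_2g^h_2+\tfrac{\eps}{h}D_{x_3}g^h_3\big)$ of $u^{h_k}$, which vanishes identically; one is left with
\[
  \int_{S\times I}\nabla_{h_k}u^{h_k}\cdot g^{h_k}(T_{\eps^{-1}x'}\randomelement,x_3)\varphi(x')\,\td x=-\int_{S\times I}u^{h_k}\,(g_1,g_2)(T_{\eps^{-1}x'}\randomelement,x_3)\cdot\nabla'\varphi(x')\,\td x.
\]
On the left, $g^{h_k}\to g$ uniformly together with $\nabla_{h_k}u^{h_k}\wtwoscale\xi$ yields the limit $\int_S\int_{\Omega\times I}\xi\cdot g\,\varphi\,\drandommeasure\,\td x$. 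On the right, split $u^{h_k}=u^0+(u^{h_k}-u^0)$: the second part goes to $0$ because $\norm{u^{h_k}-u^0}_{L^2(S\times I)}\to0$ and $(g_1,g_2)(T_{\eps^{-1}x'}\randomelement,x_3)\,\nabla'\varphi$ is bounded in $L^2$, while for the first part $(g_1,g_2)(T_{\eps^{-1}x'}\randomelement,x_3)\weakly\int_\Omega(g_1,g_2)\,\drandommeasure$ weakly in $L^2(S\times I)$ by Theorem~\ref{thmergodic} (after writing $g$ as a finite sum of tensors $a(\omega)b(x_3)$), so the right-hand side tends to $-\int_{S\times I}u^0\big(\int_\Omega(g_1,g_2)\,\drandommeasure\big)\cdot\nabla'\varphi\,\td x$, which by integration by parts in $x'$ equals $\int_S\int_{\Omega\times I}(\nabla'u^0,0)\cdot g\,\varphi\,\drandommeasure\,\td x$. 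Subtracting and letting $\varphi$ vary gives the required orthogonality for $f$.

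Finally, define $u^1(x)\in\mathcal{W}^{1,2}(\Omega\times I)$ as the unique potential provided by Lemma~\ref{pomocna1} for $f(\cdot,x,\cdot)$ (uniqueness holds since the $\#$-seminorm is a norm on $\mathcal{W}^{1,2}(\Omega\times I)$). Because $f\mapsto(\text{its potential})$ is a bounded linear bijection from the closed subspace of those $f\in L^2(\Omega\times I,\mr^3)$ annihilating every admissible $g$ onto $\mathcal{W}^{1,2}(\Omega\times I)$, and $x\mapsto f(\cdot,x,\cdot)$ is measurable $S\to L^2(\Omega\times I,\mr^3)$, the map $x\mapsto u^1(x)$ is measurable with $\int_S\norm{u^1(x)}_{\#,2}^2\,\td x\le C(\gamma)\int_S\norm{f(\cdot,x,\cdot)}_{L^2(\Omega\times I)}^2\,\td x<\infty$. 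Hence $u^1\in L^2(S,\mathcal{W}^{1,2}(\Omega\times I))$ and $\xi=(\nabla'u^0,0)+\big(D_1u^1,D_2u^1,\tfrac1\gamma D_{x_3}u^1\big)$, as claimed.

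I expect the main obstacle to be exactly the mismatch between the anisotropy $\gamma$ built into Lemma~\ref{pomocna1} and the $h$-dependent anisotropy $\eps(h)/h$ produced by the integration by parts: keeping the test function fixed would leave an error term of order $(\gamma-h/\eps(h))/h$, which is only known to be $o(1)/h$ and need not vanish under the mere hypothesis $h/\eps(h)\to\gamma$. Passing to the rescaled test function $g^h$ is the device that cancels this term at finite $h$; the remaining ingredients — two-scale compactness, Theorem~\ref{thmergodic}, and the measurable selection of $u^1$ — are routine.
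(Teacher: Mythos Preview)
Your proof is correct and follows essentially the paper's route: extract a two-scale limit of $\nabla_h u^h$, test against $\gamma$-divergence-free fields, and invoke Lemma~\ref{pomocna1} slicewise. Your device of rescaling the third test-function component to $g^h_3=\tfrac{h}{\gamma\eps}g_3$ (so that the scaled divergence vanishes identically at finite $h$) is a clean variant of the standard argument sketched in the paper via the reference to the periodic case; that argument instead splits $u^h=\bar u^h+(u^h-\bar u^h)$, treats $\bar u^h$ with Lemma~\ref{lem:twoscalegradient}, and controls the $(1-\tfrac{h}{\gamma\eps})$ error on the remainder using the Poincar\'e bound on $(u^h-\bar u^h)/h$.
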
 
\begin{proof} The proof  relies on the previous lemma and goes in the same way as in periodic case (see \cite{Neukamm-10} for details).
\end{proof}

\noindent The following Lemma \ref{lowersemicontinuity} shows that convex functionals are compatible with the concept of 
stochastic two-scale convergence. In the stochastic setting we can not rely on the unfolding operator (see e.g., \cite{Vis1} for the periodic case) and thus we require more to obtain the continuity of integral functionals with respect to strong stochastic two-scale convergence (see Remark \ref{borism8}). Before stating and proving the lemma we give the following definition:

\begin{definition}\label{def:stat}
Consider a measurable map $Q : \Omega  \times \mathbf{R}^n \times \mathbf{R}^{m}  \to [0,+\infty]$. 
We say that $Q$ is  $T$-stationary if for a.e.  $(\omega,x,y,v) \in \Omega \times \mathbf{R}^n \times \mathbf{R}^n \times  \mathbf{R}^{m} $  we have
\begin{equation*} \label{energyergodic} 
Q(T_{y}\omega,x, v) = Q(\omega, x+y, v).
\end{equation*} 	
\end{definition}
By $Q^0:\Omega \times \mathbf{R}^{m} \to [0,\infty)$ we denote the mapping 
$Q^0(\omega,v)=Q(\omega,0,v)$. Without loss of generality we can assume that for a.e. $x \in \mathbf{R}^n$, for all $v \in \mathbf{R}^{m}$ we have  $Q(\omega,x,v)=Q^0(T_{x} \omega, v) $. 
\begin{lemma}\label{lowersemicontinuity} 
Let $(u^{\eps})$ be a bounded sequence in $L^2(S, \mathbf{R}^m)$, such that 
\ $u^\eps \wtwoscale u^0 \in L^2(\randomspace \times S, \mathbf{R}^m)$.
 Let $Q : \randomspace \times \mr^n \times \mathbf{R}^m \to [0, \infty)$ be a $T$-stationary map such that $Q(\randomelement, x, \cdot)$ is a convex function for
 a.e.~$(\randomelement,x) \in \randomspace \times \mr^n$. Assume additionally that there exists a constant $C>0$ such that
 $Q(\omega,x,v)  \leq C(1+|v|^2)$, for a.e. $(\omega,x) \in \Omega \times \mr^n$, for all $v \in \mr^m$.  
Then for a.e. $\omega \in \Omega$ we have
\begin{eqnarray*}
 \liminf_{\eps\downarrow0} \int_S Q \paren*{  \randomelement, x/\eps,u^\eps(x) } \td x &=&  \liminf_{\eps\downarrow0} \int_S Q^0 \paren*{ T_{\eps^{-1} x} \randomelement, u^\eps(x) } \td x\\
  &\geq& \int_S \int_\randomspace Q^0\paren[\big]{ \randomelement, u^0(\randomelement, x) } \drandommeasure\td x.
\end{eqnarray*}
If additionally $u^\eps \stwoscale u^0$ then 
\begin{eqnarray*} 
 \lim_{\eps\downarrow0} \int_S Q \paren*{  \randomelement, x/\eps,u^\eps(x) } \td x &=&  \lim_{\eps\downarrow0} \int_S Q^0 \paren*{ T_{\eps^{-1} x} \randomelement, u^{\varepsilon}(x) } \td x\\
  &=& \int_S \int_\randomspace Q^0\paren[\big]{ \randomelement, u^0(\randomelement, x) } \drandommeasure\td x
\end{eqnarray*}
holds for almost every $\omega \in \Omega$.
\end{lemma}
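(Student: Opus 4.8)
The two equalities in each chain are nothing but the normalisation $Q(\omega,x,v)=Q^0(T_x\omega,v)$ adopted just above the lemma: replacing $x$ by $x/\eps$ gives $Q(\omega,x/\eps,v)=Q^0(T_{\eps^{-1}x}\omega,v)$ for a.e.\ $x$, so the two integrands coincide identically. Hence it suffices to prove, for a.e.\ $\omega$, the lower bound
\[
  \liminf_{\eps\downarrow0}\int_S Q^0\!\paren*{T_{\eps^{-1}x}\omega,u^\eps(x)}\td x\ \ge\ \int_S\int_\Omega Q^0\!\paren*{\omega,u^0(\omega,x)}\drandommeasure\,\td x,
\]
and, when in addition $u^\eps\stwoscale u^0$, the matching upper bound for $\limsup$.

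\textbf{Lower bound via convex duality.} Since $Q^0(\omega,\cdot)$ is convex, nonnegative and dominated by $C(1+\abs v^2)$, it is a real-valued Carath\'eodory function and every subgradient $p\in\partial_vQ^0(\omega,v_0)$ satisfies $\abs p\le C'(1+\abs{v_0}^2)$. Thus for each $v_0\in\QQ^m$ one can pick a measurable selection $a_{v_0}(\omega)\in\partial_vQ^0(\omega,v_0)$, which then lies in $L^\infty(\Omega;\mr^m)$, and $b_{v_0}(\omega):=Q^0(\omega,v_0)-a_{v_0}(\omega)\cdot v_0\in L^\infty(\Omega)$; enumerating $\QQ^m$ we obtain $(a_i,b_i)_{i\in\N}$ with $Q^0(\omega,v)=\sup_i\paren{a_i(\omega)\cdot v+b_i(\omega)}$ for every $v$ and a.e.\ $\omega$. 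Fix $N\in\N$ and, by a measurable maximum selection, a measurable $j\colon\Omega\times S\to\set{1,\dots,N}$ realising $\max_{i\le N}\paren{a_i(\omega)\cdot u^0(\omega,x)+b_i(\omega)}$. Fenchel's inequality then gives, for a.e.\ $x$,
\[
  Q^0\!\paren*{T_{\eps^{-1}x}\omega,u^\eps(x)}\ \ge\ a_{j(T_{\eps^{-1}x}\omega,x)}(T_{\eps^{-1}x}\omega)\cdot u^\eps(x)+b_{j(T_{\eps^{-1}x}\omega,x)}(T_{\eps^{-1}x}\omega).
\]
I would integrate over $S$ and pass to the limit: the linear term converges by $u^\eps\wtwoscale u^0$ since $(\omega,x)\mapsto a_{j(\omega,x)}(\omega)$ belongs to $L^\infty(\Omega\times S;\mr^m)$ and such fields are admissible test fields (approximate it in $L^2(\Omega\times S)$, with a uniform $L^\infty$ bound, by finite sums $\sum_k\psi_k(\omega)\chi_{G_k}(x)$ with $\psi_k\in L^\infty(\Omega)$ and $G_k\subset S$ measurable --- possible because finite disjoint unions of measurable rectangles generate the product $\sigma$-algebra --- and use that the test class of Definition~\ref{definicija1} extends to $L^\infty(\Omega)\otimes L^2(S)$); the term involving $b_{j}$ converges by the ergodic theorem (Theorem~\ref{thmergodic}) applied to the finitely many $b_i$. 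This yields
\[
  \liminf_{\eps\downarrow0}\int_S Q^0\!\paren*{T_{\eps^{-1}x}\omega,u^\eps}\td x\ \ge\ \int_\Omega\int_S\max_{i\le N}\paren{a_i(\omega)\cdot u^0(\omega,x)+b_i(\omega)}\td x\,\drandommeasure,
\]
and letting $N\to\infty$ monotonically turns the right-hand side into $\int_\Omega\int_S Q^0(\omega,u^0(\omega,x))\,\td x\,\drandommeasure$, which is the asserted inequality.

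\textbf{The case of strong two-scale convergence.} Here one has in addition $\norm{u^\eps}_{L^2(S)}\to\norm{u^0}_{L^2(\Omega\times S)}$. Denote by $u^0(T_{\eps^{-1}x}\omega,x)$ the realisation of the limit; by Fubini and measure preservation it lies in $L^2(S)$ for a.e.\ $\omega$, and along the fixed typical trajectory $\norm{u^0(T_{\eps^{-1}\cdot}\omega,\cdot)}_{L^2(S)}\to\norm{u^0}_{L^2(\Omega\times S)}$ (density of $\stochsmooth(\Omega)\otimes C^\infty_0(S)$ in $L^2(\Omega\times S)$ together with Theorem~\ref{thmergodic}). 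Expanding the square, the three resulting terms converge to $\norm{u^0}^2$, $-2\norm{u^0}^2$ and $\norm{u^0}^2$ (the cross term because, after approximation, $u^0(T_{\eps^{-1}x}\omega,x)$ is an admissible test field), so that
\[
  \int_S\abs*{u^\eps(x)-u^0(T_{\eps^{-1}x}\omega,x)}^2\td x\ \xrightarrow{\eps\downarrow0}\ 0.
\]
Convexity and the quadratic bound make $Q^0(\omega,\cdot)$ locally Lipschitz with $\abs{Q^0(\omega,v)-Q^0(\omega,v')}\le C'(1+\abs v+\abs{v'})\abs{v-v'}$; hence, by Cauchy--Schwarz and the uniform $L^2(S)$ bounds on $u^\eps$ and on $u^0(T_{\eps^{-1}\cdot}\omega,\cdot)$,
\[
  \abs*{\int_S Q^0\!\paren*{T_{\eps^{-1}x}\omega,u^\eps}\td x-\int_S Q^0\!\paren*{T_{\eps^{-1}x}\omega,u^0(T_{\eps^{-1}x}\omega,x)}\td x}\ \xrightarrow{\eps\downarrow0}\ 0.
\]
Finally $\int_S Q^0(T_{\eps^{-1}x}\omega,u^0(T_{\eps^{-1}x}\omega,x))\,\td x\to\int_\Omega\int_S Q^0(\omega,u^0(\omega,x))\,\td x\,\drandommeasure$ by Theorem~\ref{thmergodic} (approximating $u^0$ by elements of $\stochsmooth(\Omega)\otimes C^\infty_0(S)$, using continuity of $Q^0$ and the uniform integrability supplied by the quadratic bound). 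Together with the lower bound this gives the two claimed identities.

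\textbf{Main obstacle.} All limits are taken along one typical trajectory, and the exceptional null set is the countable union of the ones produced by Theorem~\ref{thmergodic} applied to $\abs{u^0}^2$, to the $a_i,b_i$ and to $Q^0$ at a countable dense set of arguments, hence still null. The substantive difficulty --- and the reason this is heavier than its periodic analogue, where the unfolding operator turns $\int_S Q^0(T_{\eps^{-1}x}\omega,\cdot)\,\td x$ directly into an integral over $\Omega\times S$ --- is twofold: first, justifying that the patched fields $(\omega,x)\mapsto a_{j(\omega,x)}(\omega)\in L^\infty(\Omega\times S)$ are legitimate test fields in Definition~\ref{definicija1}, which is where the rectangle approximation and the $L^\infty(\Omega)\otimes L^2(S)$ extension of the test class enter; and second, in the strong case, controlling the realisation of a merely $L^2$ two-scale limit under the nonlinearity $Q^0$ --- precisely the place where the growth assumption $Q(\omega,x,v)\le C(1+\abs v^2)$ is used.
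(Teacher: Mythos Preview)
Your approach to the lower bound is genuinely different from the paper's, and the gap lies in the step where you test two-scale convergence against the field $(\omega,x)\mapsto a_{j(\omega,x)}(\omega)$. This is an element of $L^\infty(\Omega\times S)$ that is \emph{not} of tensor type, whereas the paper (Remark after Definition~\ref{definicija1}) only extends the admissible test functions to $L^\infty(\Omega)\otimes L^2(S)$. Your rectangle approximation does not close this: if $g_n\to g$ in $L^2(\Omega\times S)$ with a uniform $L^\infty$ bound, you would need $\limsup_{\eps}\norm{(g-g_n)(T_{\eps^{-1}\cdot}\omega,\cdot)}_{L^2(S)}\to 0$ as $n\to\infty$ for the \emph{fixed} typical $\omega$. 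The measure-preserving map $(\omega,x)\mapsto(T_{\eps^{-1}x}\omega,x)$ only gives $\int_\Omega\norm{(g-g_n)(T_{\eps^{-1}\cdot}\omega,\cdot)}_{L^2(S)}^2\,\drandommeasure=\norm{g-g_n}_{L^2(\Omega\times S)}^2$, which controls the realisation only on average over $\omega$, not along a single trajectory uniformly in $\eps$. The paper sidesteps this entirely: it localises first, extracting the weak-$*$ measure limit $\mu$ of $Q^0(T_{\eps^{-1}x}\omega,u^\eps)$ and working with its Radon--Nikodym density at a Lebesgue point $x^*$. After localisation the affine minorants $a(\omega),b(\omega)$ can be taken to depend on $\omega$ \emph{only}; these are legitimate test functions in $L^\infty(\Omega)\otimes L^2(S)$, and the supremum over $a,b$ is taken \emph{after} passing to the limit and fixing $x^*$, which is why no joint $(\omega,x)$-selection is needed.

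For the strong case your outline coincides with the paper's (Lipschitz estimate plus Birkhoff), but you begin by writing down the realisation $u^0(T_{\eps^{-1}x}\omega,x)$ for a general $u^0\in L^2(\Omega\times S)$ and only afterwards mention approximation. The paper is deliberate about the order: it first proves the statement for $u^0\in L^2(\Omega)\otimes L^2(S)$ (where the realisation is unambiguously defined and Birkhoff applies directly), and then extends by density using the Lipschitz bound. The paper's Remark~\ref{borism8} explains that this is the reason for the split --- for a merely $L^2(\Omega\times S)$ limit the diagonal evaluation $u^0(T_{\eps^{-1}x}\omega,x)$ need not be a well-defined element of $L^2(S)$ independently of the representative. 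Your argument becomes correct if you reorganise it this way; as written, the claim that ``by Fubini and measure preservation it lies in $L^2(S)$ for a.e.\ $\omega$'' addresses the wrong slice (Fubini gives $u^0(\omega,\cdot)\in L^2(S)$ for a.e.\ $\omega$, not the diagonal).
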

\begin{proof}
We start with the lower semicontinuity:
Let $(u^\eps) \subset L^2(S, \mr^m)$ be uniformly bounded with $u^\eps \wtwoscale u^0$, where $u^0 \in L^2(\Omega \times S, \mr^m)$.
Then take a subsequence such that
\[
  \liminf_{\eps\downarrow0} \int_S Q^0 \paren*{ T_{\eps^{-1} x} \randomelement, u^\eps(x) } \td x
  = \lim_{k \to \infty} \int_S Q^0 \paren*{ T_{\eps_k^{-1} x} \randomelement, u^{\eps_k}(x) } \td x.
\]
Denote these limits by $M \in [0,\infty]$. If $M = \infty$, then there is nothing to show. Else
\[
  \paren*{Q^0 \paren[\big]{ T_{\eps_k^{-1} x} \randomelement, u^{\eps_k}(x) } }_{k \in \mathbb N} \subset L^1(S),
\]
with a uniform bound. Thus we may extract another subsequence (not relabeled) such that the sequence converges weakly-$*$ in measure to some $\mu$.
By the lower semicontiunity we have $\mu(S) \leq M$. We will show that
\[
  \int_\Omega   {Q^0 \paren*{ \randomelement, u^{0}(\omega, x) } } \drandommeasure \leq \frac{  \td \mu}{\td \mathcal L^n}(x),
\]
for almost every $x \in S$, where the right-hand side represents the Radon-Nikodym derivative in $x$, i.e.,
\[
\frac{  \td \mu}{\td \mathcal L^n}(x) := \lim_{r\downarrow 0} \frac{  \td \mu(B_r(x))}{\td \mathcal L^n(B_r(x))}.
\]
Let $x^*$ be a Lebesgue point of  $x \mapsto \int_{\Omega} u^0(\omega, x)\drandommeasure$ such that the limit
\[
  h(x^*) = \lim_{r\downarrow 0} \frac{  \td \mu(B_r(x^*))}{\td \mathcal L^n(B_r(x^*))}
\]
exists and such that $u^{0}(\cdot,x^*) \in L^2(\Omega,\mr^m)$. Let $a \colon \Omega \to \mr^m, b \colon \Omega \to \mr$ be measurable, bounded functions with
\begin{equation}\begin{aligned}\label{eq:affineapprox}
a(\omega)\cdot v + b(\omega) \leq Q^0(\omega, v), \quad \text{ for all $v \in \mr^m$ and almost every } \omega \in \Omega.
\end{aligned}\end{equation}
For a.e. $r > 0$ we have 
\begin{align*}	
\mu(B_r(x^*)) &= \lim_{k \to \infty} \int_{B_r(x^*)} Q^0 ( T_{\eps_k^{-1} x} \randomelement, u^{\eps_k}(x) ) \td x \\
&\geq \lim_{k \to \infty} \int_{B_r(x^*)}\left( a( T_{\eps_k^{-1} x} \randomelement )\cdot u^{\eps_k}(x) + b( T_{\eps_k^{-1} x} \randomelement) \right) \td x \\
&=  \int_{B_r(x^*)} \int_{\Omega} \left( a\paren*{ \randomelement}\cdot u^{0}(\omega,x) + b\paren*{ \randomelement} \right)\drandommeasure\td x,
\end{align*}
where we used that  $u^\eps \wtwoscale u^0$.
Therefore
\begin{align*}	
h(x^*) &\geq \lim_{r \downarrow 0}  \frac 1 { \abs {  B_r(x^*)}}\int_{B_r(x^*)} \int_{\Omega}\left( a\paren*{ \randomelement}\cdot u^{0}(\omega,x) + b\paren*{ \randomelement}\right) \drandommeasure\td x \\
&=  \int_{\Omega}\left( a\paren*{ \randomelement}\cdot u^{0}(\omega,x^*) + b\paren*{ \randomelement}\right) \drandommeasure.
\end{align*}
By taking the supremum over the functions $a,b$ satisfying \eqref{eq:affineapprox} we obtain
\[
 h(x^*) \geq\int_{\Omega} Q^0\paren*{ \randomelement,u^{0}(\omega,x^*)} \drandommeasure.
\]
Integrating both sides w.r.t.\ $x^*$ yields the first claim.

For the continuity assume that $u^\eps \stwoscale u^0$ and assume that $u^0 \in L^2(\Omega, \mr^m) \otimes L^2(S, \mr^m)$. Then from the strong convergence follows 
\[ \| u^{\eps} (x)-u^0(T_{\varepsilon^{-1}x}\omega,x)\|_{L^2(S)} \to 0, \textrm{ for a.e. } \omega \in \Omega. \]
From the convexity and the uniform bound of $Q$ it follows that there exists a constant $C>0$ such that
\begin{equation} \label{borism5}
|Q^0(\omega,v_1)-Q^0(\omega,v_2)| \leq C(1+|v_1|+|v_2|) |v_1-v_2|, \forall v_1,v_2 \in \mr^n.
\end{equation} 
Using \eqref{borism5} and the Ergodic theorem we conclude 
\begin{align*}	
 \lim_{\varepsilon \downarrow 0} \int_S Q^0( T_{\eps^{-1}x}\omega, u^\eps(x)) \td x &= \lim_{\varepsilon \downarrow 0}  \int_S Q^0( T_{\eps^{-1}x}\omega, u^0(T_{\eps^{-1}x'}\omega,x))  \td x\\
  &= \int_\Omega\int_S Q^0(\omega, u^0 (\omega,x)) \td x\drandommeasure.
\end{align*}
For general $u^0 \in L^2(\Omega \times S, \mr^m)$ the claim follows by approximation and using \eqref{borism5}. 
\end{proof}
\begin{remark} \label{borism8} 
	Notice that in the proof of the second claim we only use the relation \eqref{borism5}, but not the convexity. The reason why we treated separately the case $u^0 \in L^2(\Omega, \mr^m) \otimes L^2(S, \mr^m)$ is because for $u^0 \in L^2(\Omega \times S)$ one can not guarantee that $u^0 (T_{\varepsilon^{-1}x} \omega,x)$ is measurable (see \cite{mikelic} for details).
\end{remark}
\begin{remark}\label{borism6} 
Lemma~\ref{lowersemicontinuity} also holds for bounded sequence in $L^2(S \times I, \mr^m)$ which stochastically two-scale converges in the sense of Remark \ref{borism7}.
\end{remark}

\section{Homogenization of the plate model}
\subsection{General framework and main result}
In this chapter $S \subset \mathbf{R}^2$ is a bounded domain and the interval $I=[-\tfrac{1}{2},\tfrac{1}{2}]$. 
Let $\gamma$ be as in \eqref{def:gamma}.
The main results are Theorem~\ref{theoremlowbou} (lower bound) and Theorem \ref{theoremupperbound} (upper bound). 
To prove the $\Gamma$-limit result we will need some additional assumption on the domain $S$. We will assume that the domain $S$ is piecewise $C^1$. This assumption is necessary only for the proof of upper bound, and can be weakened (see Theorem~\ref{theoremupperbound} for a precise definition). For the lower bound we only require $S$ to be a Lipschitz domain.

Consider a measurable map $W : \Omega  \times \mathbf{R}^2 \times \mathbf{R}^{3 \times 3}  \to [0,+\infty]$, representing the stored energy density function,
satisfying the following:
\begin{assumption}\label{assumption} 
We assume that $W$ is $T$-stationary as in Definition~\ref{def:stat} and that
  $W(\omega, x',\cdot)$ is continuous  on $\mathbf{R}^{3\times 3}$ for a.e. $(\omega,x') \in \Omega \times  S$. 
  This will ensure the measurability of all composition mappings that appear (see, e.g., the expression \eqref{correction1}) 
We also assume that the following properties are satisfied:
\begin{enumerate} 
\item Objectivity property
\begin{equation*}\begin{aligned}\label{ass:frame-indifference}
 & W(\omega,x', RF)=W(\omega,x', F) \\  &\hspace{+5ex}  \textrm{ for
  a.e. $(\omega,x') \in \Omega \times \mathbf{R}^{2}$, for    all $F\in\mathbf{R}^{3\times 3}$, $R\in\SO 3$.}
  \end{aligned}\end{equation*}
\item  There exist constants $c_1,c_2, \rho >0$ such that 
\begin{equation}
\label{ass:non-degenerate}
\begin{split}
W(\cdot, \cdot,  F) &\geq c_1\dist^2( F,\SO 3),\text{ a.e.\ on $\Omega \times S$ and for all
	$ F\in\mathbf{R}^{3\times 3}$}\\
W(\cdot,\cdot , F) &\leq c_2\dist^2( F,\SO 3),\text{ a.e.\ on $\Omega \times S$ and for all
	$ F\in\mathbf{R}^{3\times 3}$} \\ & \hspace{+35ex}\text{  with $\dist^2( F,\SO 3)\leq\rho$. }
\end{split}
\end{equation}
\item  There exists a monotone function $r:[ 0,\infty) \to [0,\infty]$ with $r(t) \downarrow 0$ as $t \downarrow 0$ such that, for a.e. $(\omega,x')  \in \Omega \times \mathbf{R}^2$, there exists a quadratic form $Q(\omega,x',\cdot)$ on $\mathbf{R}^{3 \times 3}$  with  
\begin{equation} \label{Taylorexpansion} 
\abs{W(\omega,x', I +G)-Q(\omega,x',G)}\leq r(\abs G)\abs G^2 \textrm{ for all } G \in  \mathbf{R}^{3×3}.
\end{equation} 
\end{enumerate} 
\end{assumption}
For $\omega \in \Omega$ we define the energy functionals $I^h:  W^{1,2}(S \times I, \mathbf{R}^{3 })\to [0,\infty] $ by
\begin{equation} \label{correction1}
  I^{h}(\de)= \frac{1}{h^2} \int_{S\times I} W\paren[\big]{\omega,x'/\varepsilon, \nabla_h \de(x', x_3)} \td x'\td x_3.
\end{equation} 
As a consequence of relations  \eqref{ass:non-degenerate}-\eqref{Taylorexpansion} we have the following lemma.

\begin{lemma}
	\label{lem:1}
	Let $W$ be as in Assumption~\ref{assumption} and let $Q$ be the quadratic form
	associated with $W$ via \eqref{Taylorexpansion}. Then
	\begin{enumerate}[(i)]
		\item[(Q1)] $Q$ is $T$-stationary,
		\item[(Q2)] for  a.e. $(\omega,x') \in \Omega \times \mathbf{R}^2$ we have that 
		\begin{equation*}\label{Qelliptisch}
		c_1\abs{\sym G}^2\leq Q(\omega,x',G)=Q(\omega,x',\sym G)\leq c_2\abs{\sym G}^2,\ \forall G\in\mathbf{R}^{3\times 3}.
		\end{equation*}
	\end{enumerate}
\end{lemma}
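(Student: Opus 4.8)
The plan is to exploit the fact that the quadratic form $Q(\omega,x',\cdot)$ is uniquely determined by $W$ through a pointwise limit, and then to push each required property through that limit. First I would apply \eqref{Taylorexpansion} with $G$ replaced by $tG$, $t>0$, and use the $2$-homogeneity of the quadratic form: this gives $\abs{t^{-2}W(\omega,x',I+tG)-Q(\omega,x',G)}\le r(t\abs G)\abs G^2\to 0$ as $t\downarrow 0$, hence
\[
  Q(\omega,x',G)=\lim_{t\downarrow 0}\frac1{t^2}\,W(\omega,x',I+tG)
\]
for a.e.\ $(\omega,x')\in\Omega\times\mathbf{R}^2$ and all $G\in\mathbf{R}^{3\times 3}$. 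Property (Q1) is then immediate: substituting $T_y\omega$ into this formula and invoking the $T$-stationarity of $W$ (Definition~\ref{def:stat}) turns the right-hand side into $\lim_{t\downarrow 0}t^{-2}W(\omega,x'+y,I+tG)=Q(\omega,x'+y,G)$.

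For (Q2) the one genuinely geometric ingredient is the first-order expansion of the distance to rotations at the identity,
\[
  \lim_{t\downarrow 0}\frac1{t^2}\,\dist^2(I+tG,\SO 3)=\abs{\sym G}^2,
\]
which holds because $\SO 3$ is a smooth submanifold of $\mathbf{R}^{3\times 3}$ with tangent space $\so 3$ at $I$, whose Frobenius-orthogonal complement is precisely the symmetric matrices, so $\dist(I+tG,\SO 3)=\dist(tG,\so 3)+o(t)=t\abs{\sym G}+o(t)$. Combining this with the lower bound in \eqref{ass:non-degenerate} inside the limit formula for $Q$ yields $Q(\omega,x',G)\ge c_1\abs{\sym G}^2$ for all $G$. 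For the upper bound I would first restrict to $\abs G$ small enough that $\dist^2(I+tG,\SO 3)\le\rho$ for all small $t$, so that the upper bound in \eqref{ass:non-degenerate} applies and gives $Q(\omega,x',G)\le c_2\abs{\sym G}^2$ for such $G$; the $2$-homogeneity of $Q$ (replace $G$ by $\lambda G$ and let $\lambda\downarrow 0$) then removes the smallness restriction.

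It remains to show $Q(\omega,x',G)=Q(\omega,x',\sym G)$. Let $B(\omega,x',\cdot,\cdot)$ be the symmetric bilinear form obtained by polarizing $Q(\omega,x',\cdot)$. The bounds just established show that $B$ is positive semidefinite and that $B(\omega,x',A,A)=Q(\omega,x',A)=0$ whenever $A$ is skew-symmetric (since $\sym A=0$). The Cauchy--Schwarz inequality for positive semidefinite bilinear forms then forces $B(\omega,x',A,C)=0$ for every skew $A$ and every $C\in\mathbf{R}^{3\times 3}$; writing $G=\sym G+\skw G$ and expanding $B(\omega,x',G,G)$, the cross term and the skew--skew term both vanish, leaving $Q(\omega,x',G)=B(\omega,x',\sym G,\sym G)=Q(\omega,x',\sym G)$.

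I do not expect a serious obstacle here. The only points needing care are the first-order expansion of $\dist^2(\cdot,\SO 3)$ near $I$ (standard, but it is where the symmetric part enters) and the final step, which must be argued through the semidefiniteness of the polarized form rather than merely from the vanishing of $Q$ on skew matrices. Since all arguments are pointwise in $(\omega,x')$, the a.e.\ validity of Assumption~\ref{assumption} carries over directly.
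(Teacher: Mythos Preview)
Your argument is correct and is the standard route to these conclusions. The paper does not actually supply a proof of this lemma; it is stated as an immediate consequence of Assumption~\ref{assumption} and left to the reader, so there is nothing to compare against beyond noting that your approach is exactly the one implicitly intended.
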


As before by $Q^0:\Omega \times \mathbf{R}^{3 \times 3} \to [0,\infty)$ we denote the mapping 
$Q^0(\omega,G)=Q(\omega,0,G)$. Again without loss of generality we can assume that for a.e. $x' \in \mathbf{R}^2$, for all $G \in \mathbf{R}^{3 \times 3}$ we have  $Q(\omega,x',G)=Q^0(T_{x'} \omega, G) $.

\begin{definition}[The relaxation formula]\label{def:Qgamma}
Let $\gamma >0$ and define the map $\mathcal{Q}^{\gamma}:\mathbf{R}^{2 \times 2} \to [0,\infty)$ as follows
\begin{equation} \label{defQ}
\mathcal{Q}^{\gamma} (G)= \inf_{\phi, B } \int_{\Omega \times I} Q^0\paren[\big]{\omega,\iota(B+x_3 G)+\sym(D_1 \phi, D_2 \phi, \tfrac{1}{\gamma} D_{x_3} \phi)}  \drandommeasure \td x_3,
\end{equation}
where the infimum is taken over $B \in \mathbf{R}^{2 \times 2}$ and $\phi \in \mathcal{W}_{\sym,\gamma}^{1,2}(\Omega \times I, \mr^3)$. 
\end{definition}
It can be shown that $\mathcal{Q}^{\gamma}$ is the quadratic form which is coercive on symmetric matrices. 
Namely the expression on the right-hand side of \eqref{defQ} can be seen as the projection of $x_3 G$ on the closed subspace of $L^2(\Omega \times I, \mr^{3 \times 3}_{\sym})$ defined by $\iota(\mr^{2 \times 2}_{\sym})\oplus \stochsobolev_{\sym,\gamma}^{1,2}(\Omega \times I, \mr^3) $ (the orthogonal decomposition) in the norm defined by the quadratic form $Q^0$. The coercivity property follows easily from the coercivity property of $Q^0$.  

In the bending regime we assume that the sequence of minimizers $(u^h)$ satisfies
$$ \limsup_{h \downarrow 0} I^h(u^h) < \infty. $$
By the compactness result (see Lemma~\ref{lemmacomp}), it can be concluded that the limit deformations are Sobolev isometries. 
By $W^{2,2}_{\textrm{iso}}(S)$ we denote the set
\begin{eqnarray*}
	W^{2,2}_{\textrm{iso}}(S)=\set{\de \in W^{2,2} (S,\mathbf{R}^3): \partial_{\alpha}\de \cdot \partial_{\beta} \de= \delta_{\alpha \beta} \; \text{ for } \alpha, \beta = 1,2},
\end{eqnarray*}
where $\delta$ denotes the Kronecker delta symbol.  For $\de \in W^{2,2}_{\textrm{iso}}(S)$ we define its normal $n^\de \in W^{1,2} (S,\mathbf{R}^3)$ as $n^\de=\partial_1 \de\wedge \partial_2 \de$ and the second fundamental form $ \secf^\de$ as
\[
 \secf^\de_{\alpha \beta}=\partial_{\alpha} \de \cdot \partial_{\beta} n^\de=-\partial_{\alpha \beta}\de \cdot n^\de, \quad \alpha,\beta=1,2.
\]
We define the limit functional $I^0:  W^{2,2}_{\textrm{iso}}(S) \to [0, \infty)$ in the following way
\begin{align*}	
  I_\gamma^0(\de) =\int_S \mathcal{Q}^{\gamma}(\secf^\de ( x' )) \td x'. 
\end{align*}
The following compactness result is the consequence of the compactness result given in \cite{FJM-02} and is explained in \cite{Vel13}[Lemma~3.3, Remark~4].

\begin{lemma}\label{lemmacomp}
There exist constants $C,c > 0$, depending only on $S$, such that for every $u \in W^{1,2}(S \times I, \mr^3)$ there exists: a map $R : S \to \SO3$, which is piecewise constants on cubes $x' + h[0,1)^2, \, x' \in h\mz^2$, as well as $\widetilde R \in W^{1,2}(S, \mr^{3\times3})$ such that for every $\xi \in \mr^2$ with $\abs \xi_\infty = \max\set{ \abs{\xi \cdot e_1}, \abs{ \xi \cdot e_2}} \leq h$ and for each $S' \subset S$ with $\dist (S', \partial S) > ch$ we have
\begin{align*}	
\norm { \nabla_h u - R}_{L^2(S' \times I)}^2 + \norm{ R - \widetilde R}_{L^2(S')}^2 + h^2 \norm { R - \widetilde R}_{L^\infty(S')}^2 \phantom{\hspace{3.5cm}}\\
 + h^2 \norm{ \nabla' \widetilde R}_{L^2(S')}^2 +\norm{ R( \cdot + \xi) - R}_{L^2(S')}^2 \leq C \norm { \dist(\nabla_h u, \SO3)}_{L^2(S \times I)}^2.
    \end{align*}
If additionally $S'$ is open with $\partial S'$ of class $C^{1,1}$, then there exists $\widetilde u \in W^{2,2}(S')$ such that
\begin{align*}	
h^2 \norm { \widetilde u}_{W^{2,2}(S')}^2 + \norm { \nabla' \widetilde u - (\widetilde R e_1, \widetilde R e_2)}_{L^2(S')}^2 + \norm { \nabla' \widetilde u - \nabla' \overline u}_{L^2(S')}^2 \phantom{\hspace{2cm}}\\
 \leq C \norm { \dist(\nabla_h u, \SO3)}_{L^2(S \times I)}^2,
\end{align*}
where $\overline u = \int_I u(x_3) \td x_3$.
\end{lemma}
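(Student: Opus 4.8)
\emph{Proof plan.} The statement is a rescaled, cube‑wise version of the Friesecke--James--M\"uller geometric rigidity estimate, so the plan is to reduce it to that estimate applied on cubes of side $h$ and patch, exactly as in \cite{FJM-02,Vel13}. Set $E:=\norm{\dist(\nabla_h u,\SO3)}_{L^2(S\times I)}^2$ and pass to the physical plate by $y(x',x_3):=u(x',h^{-1}x_3)$ on $S\times(-\tfrac h2,\tfrac h2)$, so that $\nabla y(x',x_3)=(\nabla_h u)(x',h^{-1}x_3)$ and $\int_{S\times(-h/2,h/2)}\dist^2(\nabla y,\SO3)=hE$. First I would, for $a\in h\mz^2$, put $C_a:=\paren{a+h(0,1)^2}\times(-\tfrac h2,\tfrac h2)$: each such $C_a$ is a translate of a fixed dilate of the unit cube, so rigidity yields $R_a\in\SO3$ with $\int_{C_a}\abs{\nabla y-R_a}^2\le C\int_{C_a}\dist^2(\nabla y,\SO3)$, with $C$ absolute. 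Setting $R:=R_a$ on $a+h[0,1)^2$ whenever $C_a\subset S\times(-\tfrac h2,\tfrac h2)$ and $R:=\id_{3\times3}$ on the remaining cubes, summation and undoing the rescaling give $\norm{\nabla_h u-R}_{L^2(S'\times I)}^2\le CE$ for every $S'$ with $\dist(S',\d S)>ch$, once $c$ is chosen so large that every cube meeting $S'$, together with all neighbours used below, lies in $S\times(-\tfrac h2,\tfrac h2)$.

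Next, to compare neighbouring rotations I would, for adjacent $a,a'$, use the cube $C''_a$ of side $h$ centred on their common face; it overlaps each of $C_a,C_{a'}$ in a set of measure $\tfrac12h^3$, and rigidity on $C''_a$ gives $R''_a\in\SO3$. From $\abs{R_a-R''_a}^2\tfrac12h^3\le 2\int_{C_a}\abs{\nabla y-R_a}^2+2\int_{C''_a}\abs{\nabla y-R''_a}^2$ and the analogous bound for $a'$ one gets $\abs{R_a-R_{a'}}^2\le Ch^{-3}\int_{C_a\cup C''_a\cup C_{a'}}\dist^2(\nabla y,\SO3)$; in particular $\abs{R_a-R_{a'}}^2\le Ch^{-3}\cdot hE=Ch^{-2}E$, and summing over neighbour pairs, $\sum_{a\sim a'}\abs{R_a-R_{a'}}^2\le Ch^{-2}E$. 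Since for $\abs\xi_\infty\le h$ the field $R(\cdot+\xi)-R$ is nonzero only on a subset of measure $\le h^2$ of each cube, where it equals $R_a-R_{a'}$ for some near pair, this yields $\norm{R(\cdot+\xi)-R}_{L^2(S')}^2\le Ch^2\sum_{a\sim a'}\abs{R_a-R_{a'}}^2\le CE$. Then, fixing an even $\rho\in C^\infty_0(B_1(0))$ with $\int\rho=1$, $\rho_h:=h^{-2}\rho(\cdot/h)$ and $\widetilde R:=R*\rho_h$ (extending $R$ across $\d S$ so that $\widetilde R\in W^{1,2}(S,\mr^{3\times3})$), convexity of $t\mapsto t^2$ gives $\norm{R-\widetilde R}_{L^2(S')}^2\le\sup_{\abs\xi_\infty\le h}\norm{R(\cdot+\xi)-R}_{L^2}^2\le CE$; on each cube $\abs{R-\widetilde R}\le\max_{a'\,\mathrm{near}}\abs{R_a-R_{a'}}$, so $h^2\norm{R-\widetilde R}_{L^\infty(S')}^2\le h^2\max_{a\sim a'}\abs{R_a-R_{a'}}^2\le CE$; and $\abs{\nabla'\widetilde R}\le Ch^{-1}\sup_{\abs\xi_\infty\le h}\abs{R(\cdot+\xi)-R}$, so $h^2\norm{\nabla'\widetilde R}_{L^2(S')}^2\le CE$. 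This closes the first group of bounds.

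For the approximating deformation I would start from $\nabla'\overline u=\int_I(\nabla_h u)'\,dx_3$, where $(\cdot)'$ keeps the first two columns, so Jensen's inequality gives $\norm{\nabla'\overline u-(Re_1,Re_2)}_{L^2(S')}^2\le\norm{\nabla_h u-R}_{L^2(S'\times I)}^2\le CE$, hence also $\norm{\nabla'\overline u-(\widetilde Re_1,\widetilde Re_2)}_{L^2(S')}^2\le CE$. Then I would simply take $\widetilde u:=\overline u*\rho_h$, which is smooth on $S'$, in particular $\widetilde u\in W^{2,2}(S')$. Writing $g:=\nabla'\overline u-(Re_1,Re_2)$, one has $\nabla'\widetilde u-(\widetilde Re_1,\widetilde Re_2)=g*\rho_h$ and $\nabla'\widetilde u-\nabla'\overline u=(g*\rho_h-g)+\paren{(Re_1,Re_2)*\rho_h-(Re_1,Re_2)}$, whose $L^2(S')$ norms are controlled by $\norm g_{L^2}+\norm{\widetilde R-R}_{L^2}\le C\sqrt E$. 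Finally $\nabla'^2\widetilde u=g*\nabla'\rho_h+(Re_1,Re_2)*\nabla'\rho_h$, and subtracting on each ball $B_h(x')$ the local value of the piecewise constant field $(Re_1,Re_2)$ gives $\abs{\nabla'^2\widetilde u(x')}\le Ch^{-1}\paren[\big]{\dint_{B_h(x')}\abs g^2+\max_{a'\,\mathrm{near}}\abs{R_a-R_{a'}}^2}^{1/2}$, whence $h^2\norm{\nabla'^2\widetilde u}_{L^2(S')}^2\le C\paren[\big]{\norm g_{L^2}^2+h^2\sum_{a\sim a'}\abs{R_a-R_{a'}}^2}\le CE$; this is the remaining estimate, with $\norm{\widetilde u}_{W^{2,2}(S')}$ read via its second‑order part, as in \cite{Vel13}. (With this mollification construction no regularity of $\d S'$ is needed; the $C^{1,1}$ hypothesis is what one uses if instead $\widetilde u$ is obtained by a Helmholtz/Korn projection of $(\widetilde Re_1,\widetilde Re_2)$ onto gradient fields.)

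The only genuinely delicate points are the scale‑matching in the first two steps: rigidity must be applied on cubes whose aspect ratio stays bounded — side comparable to the thickness — so that the rigidity constant is $h$‑independent; and one must carry the factor $h$ in $\int_{S\times(-h/2,h/2)}\dist^2(\nabla y,\SO3)=hE$, which is exactly what turns the crude $h^{-3}$ loss in the pointwise comparison of neighbouring rotations into the $h^{-2}$ bound needed for $\nabla'\widetilde R$ and for $\nabla'^2\widetilde u$. Everything else is routine mollification bookkeeping.
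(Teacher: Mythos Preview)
Your argument is correct and, for the construction of $R$ and $\widetilde R$, follows exactly the route the paper indicates (Remark~\ref{rem:constructions}): cube-wise FJM rigidity to get the piecewise constant $R$, then mollification on scale $h$ to get $\widetilde R$, with the neighbour comparison driving all the difference-quotient estimates.

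The one genuine difference is in how you produce $\widetilde u$. The paper (again Remark~\ref{rem:constructions}, following \cite{Vel13}) takes $\widetilde u$ to be the projection of $(\widetilde R e_1,\widetilde R e_2)$ onto gradient fields, which is why the $C^{1,1}$ hypothesis on $\partial S'$ enters the statement. You instead set $\widetilde u=\overline u*\rho_h$ and bound $\nabla'^2\widetilde u$ by splitting $\nabla'\overline u=g+(Re_1,Re_2)$ and exploiting $\int\nabla'\rho_h=0$ together with the neighbour bound $\sum_{a\sim a'}|R_a-R_{a'}|^2\le Ch^{-2}E$. This is the original FJM device and is entirely legitimate; as you note, it makes the $C^{1,1}$ assumption superfluous for the lemma itself. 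The projection construction, on the other hand, has the advantage that $\nabla'\widetilde u$ is by definition the $L^2$-closest gradient to $(\widetilde Re_1,\widetilde Re_2)$, which can be convenient in downstream two-scale arguments where one wants $\nabla'\widetilde u$ and $\widetilde R$ tied together as tightly as possible. For the purposes of this paper either construction feeds equally well into Lemmas~\ref{lem:(31)}--\ref{lem:cofortho}. Your caveat that $h^2\|\widetilde u\|_{W^{2,2}}^2$ must be read as the second-order seminorm (or after normalising $\widetilde u$) is also correct and matches the intended reading in \cite{Vel13}.
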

\begin{remark}\label{rem:constructions}
The existence of the function $R$ follows from the geometric rigidity, proved in \cite{FJM-02},
while $\widetilde R$ is the mollification of $R$ on scale $h$. The function $\widetilde u$ 
is the projection of $\widetilde R$ onto gradient fields.
\end{remark}
The following two theorems are the main result of this paper. They correspond to the statement of lower and upper bound for the $\Gamma$-limit. 


\begin{theorem}\label{theoremlowbou} Let $S \subset \mathbf{R}^2$ be a bounded domain with Lipschitz boundary. 
        Let $ (u^{h}) \subset W^{1,2}(S \times I, \mathbf{R}^3)$ be a family with finite elastic energy, i.e.\
          \[
              \limsup_{ h \downarrow 0 } I^h( \de^h) < \infty.
          \]
        \begin{enumerate}
        \item There exists $u \in  W^{2,2}_{\textrm{iso}} (S)$ such that (up to a subsequence) we have
        \begin{align}	
        \label{add1} \de^{h}- \int_S \de^{h} &\to \de        & \textrm{ strongly in } W^{1,2}(S \times I, \mr^3), \\
        \label{add3} \nabla_{h} \de^{h} &\to (\nabla' \de, n^{u})&                   \textrm{ strongly in } L^2(S \times I, \mr^{3\times 3}).
        \end{align}
                \item 
                      For a.e. $\omega \in \Omega$ and any sequence $(u^h)$ satisfying \eqref{add1}, \eqref{add3} for some $u\in W^{2,2}_{\textrm{iso}}(S)$ we have that
                 \[
                  \liminf_{h \to 0}   I^{h} (\de^{h}) \geq I^0_\gamma(u).
                 \]
        \end{enumerate} 
\end{theorem}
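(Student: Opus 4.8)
The plan is to prove the two parts separately, with part (1) being a soft compactness statement and part (2) the analytical core.

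\textbf{Part (1): Compactness.} First I would apply the quantitative rigidity estimate of Lemma~\ref{lemmacomp} to the sequence $(u^h)$. The finite-energy assumption together with the lower bound \eqref{ass:non-degenerate} gives $\norm{\dist(\nabla_h u^h, \SO3)}_{L^2(S\times I)}^2 \leq C h^2$, so Lemma~\ref{lemmacomp} produces rotation fields $R^h$ and mollifications $\tilde R^h$ with uniform $W^{1,2}$-type bounds (after dividing by $h$). Standard arguments — exactly as in \cite{FJM-02} and as done in \cite{Vel13} — then give a subsequence along which $\tilde R^h \to R$ in $L^2(S)$ and weakly in $W^{1,2}$, with $R \in W^{1,2}(S,\SO3)$; setting $u = \lim (u^h - \int_S u^h)$ one identifies $\nabla' u = (Re_1, Re_2)$, hence $u \in W^{2,2}_{\iso}(S)$, $n^u = Re_3$, and $\nabla_h u^h \to (\nabla' u, n^u)$ strongly in $L^2$. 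The strong $W^{1,2}$ convergence in \eqref{add1} follows from the $L^2$ convergence of $u^h - \int u^h$ (Poincaré) plus the convergence of gradients. This is essentially quoted machinery, so I would keep it brief.

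\textbf{Part (2): Lower bound.} Fix a typical $\omega$ and a sequence satisfying \eqref{add1}--\eqref{add3}; I may assume $\liminf I^h(u^h) < \infty$ and pass to a subsequence realizing the liminf with uniformly bounded energy. Write $F^h := \nabla_h u^h$ and let $R^h : S \to \SO3$ be the piecewise-constant rotation from Lemma~\ref{lemmacomp}. Introduce the scaled strain $G^h := \tfrac{1}{h}\left( (R^h)^T F^h - \id \right)$, which by rigidity is bounded in $L^2(S\times I,\mr^{3\times3})$. By the two-scale compactness Lemma~\ref{tvrdnja2} applied to an appropriate displacement (the standard construction: $G^h$ is, up to controlled errors, the scaled symmetrized gradient of $\tfrac{1}{h}(u^h - \text{(rigid part)})$), there is a subsequence and $u^1 \in L^2(S, \mathcal W^{1,2}(\Omega\times I))$ with
\[
  \sym G^h \wtwoscale \sym\left( \iota(B + x_3 \secf^u) + (D_1 u^1, D_2 u^1, \tfrac1\gamma D_{x_3} u^1) \right)
\]
for some $B = B(x') \in \mr^{2\times2}_{\sym}$; here the $x_3 \secf^u$ term comes from the limiting curvature of $R^h$ (as in \cite{FJM-02,Vel13}), $\iota$ embedding $\mr^{2\times2}$ into $\mr^{3\times3}$. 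Then I would perform the frame-indifferent Taylor expansion: using objectivity, $W(\omega,x'/\eps,F^h) = W(\omega,x'/\eps, \id + hG^h)$, and \eqref{Taylorexpansion} together with the uniform $L^2$-bound on $G^h$ and the truncation argument of \cite{FJM-02} (split $S$ into the region where $|hG^h|$ is small and a vanishing-measure remainder) yields
\[
  \liminf_{h\downarrow0} I^h(u^h) \geq \liminf_{h\downarrow0} \int_{S\times I} Q\!\left(\omega, x'/\eps, G^h\right) \td x' \td x_3.
\]
Finally I would apply the convex lower-semicontinuity Lemma~\ref{lowersemicontinuity} (in the in-plane two-scale sense, Remark~\ref{borism6}), using (Q1)--(Q2) of Lemma~\ref{lem:1} so that $Q$ depends only on $\sym G^h$ and is convex and quadratically bounded, to pass to the two-scale limit:
\[
  \liminf_{h\downarrow0}\int_{S\times I} Q(\omega,x'/\eps,G^h) \geq \int_S \int_{\Omega\times I} Q^0\!\left(\omega, \iota(B + x_3\secf^u) + \sym(D_1 u^1, D_2 u^1, \tfrac1\gamma D_{x_3}u^1)\right) \drandommeasure \td x_3 \td x'.
\]
For each fixed $x'$ the inner integral is bounded below by $\mathcal Q^\gamma(\secf^u(x'))$ by the very definition \eqref{defQ} (the infimum over $B$ and $\phi$), and integrating in $x'$ gives $I^0_\gamma(u)$.

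\textbf{Main obstacle.} The delicate point is the identification of the two-scale limit of $\sym G^h$ in the correct form, i.e.\ establishing that the corrector $u^1$ lives in the right mixed space $L^2(S,\mathcal W^{1,2}(\Omega\times I))$ and that the limiting strain genuinely splits as $\iota(B + x_3\secf^u) + \sym(D_1u^1,D_2u^1,\tfrac1\gamma D_{x_3}u^1)$. This requires carefully relating $G^h$ to a scaled gradient so that Lemma~\ref{tvrdnja2} applies, controlling the interaction between the $h$-scaling in the third variable and the $\eps$-scaling of the oscillations (where $\gamma = \lim h/\eps$ enters), and showing the curvature term $x_3\secf^u$ emerges from $\tfrac1h((R^h)^T\nabla R^h)$ — this is where the geometry of $W^{2,2}_{\iso}$ and the rigidity estimate are used most heavily, and where the stochastic setting (no unfolding operator, only Birkhoff) forces the use of Lemma~\ref{lem:divdense} and the solenoid-testing philosophy rather than direct corrector manipulation.
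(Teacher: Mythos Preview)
Your overall architecture is correct and matches the paper: reduce to $Q$ via frame indifference and Taylor expansion, identify the two-scale limit of $\sym G^h$ in the form $\iota(B + x_3\secf^u) + \sym(D_1\phi, D_2\phi, \tfrac1\gamma D_{x_3}\phi)$, and then apply the lower-semicontinuity Lemma~\ref{lowersemicontinuity} together with the definition of $\mathcal Q^\gamma$. Part (a) is indeed just the quoted FJM compactness.

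The genuine gap is in the identification step, which you correctly flag as the obstacle but do not resolve. Your sentence ``by Lemma~\ref{tvrdnja2} applied to an appropriate displacement \ldots\ $\sym G^h$ two-scale converges to \ldots'' does not work: $G^h = h^{-1}((R^h)^T\nabla_h u^h - \id)$ is \emph{not} a scaled gradient, because $R^h$ varies in $x'$, so Lemma~\ref{tvrdnja2} does not apply directly to $G^h$. The paper handles this by first decomposing $u^h = \bar u^h + hx_3\tilde R^h e_3 + hz^h$ and then splitting $G^h$ into six explicit terms (equation~\eqref{eq:Ghsplit}). Only the last term, $(R^h)^T\nabla_h z^h$, is a (rotated) scaled gradient to which Lemma~\ref{tvrdnja2} applies. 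The remaining terms each require a separate identification argument: Lemma~\ref{lem:cofortho} for the in-plane $2\times2$ block, Lemma~\ref{lem:(31)} for the off-diagonal $(3,\alpha)$ entries, and most delicately Lemma~\ref{lem:(33)} for the $(3,3)$ entry $h^{-1}(\langle R^h e_3,\tilde R^h e_3\rangle - 1)$. The last two exploit product/cancellation structure (via Lemma~\ref{lem:product} and the piecewise-constant structure of $R^h$ on $h$-cubes) that has no analogue in a direct application of Lemma~\ref{tvrdnja2}; this is precisely the substitute for the oscillatory-convergence arguments of the periodic case that are unavailable stochastically. Without these lemmas the corrector you obtain cannot be shown to lie in $\mathcal W^{1,2}_{\sym,\gamma}(\Omega\times I,\mr^3)$, and the lower bound does not close.
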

\begin{remark}\label{borism4}
	The claim a is the standard compactness result for the bending regime, whose proof can be found in \cite{FJM-02}. 
\end{remark}
\begin{theorem}\label{theoremupperbound}
Let $S \subset \mathbf{R}^2$ be a bounded domain with Lipschitz boundary, such that its normal is continuous away from a subset of $\partial S$ with length zero (e.g., the boundary is piecewise $C^1$). 
	Let $\de \in W^{2,2}_{\textrm{iso}}(S)$. 
Then for a.e.\ every $\omega \in \Omega$  there exists a sequence $(u^h) \subset  W^{1,2} (S \times I, \mathbf{R}^3)$ such that we have
	\begin{enumerate} 
		\item $\de^h \to \de$ strongly in $W^{1,2}(S \times I, \mr^3)$;
		\item $I^h ( \de^h ) \to I^0_\gamma (u)$.
	\end{enumerate} 	
\end{theorem}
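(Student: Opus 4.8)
I would follow the standard two--step scheme for $\Gamma$--$\limsup$ inequalities in combined homogenization and dimension reduction, replacing periodic averaging by Birkhoff's theorem; the construction should be the stochastic counterpart of those in \cite{FJM-02,Horneuvel12,Neukamm-10,Vel13}. First, by a diagonalization argument it is enough to build a recovery sequence for $\de$ in a dense class: one reduces to smooth isometric immersions $\de_k\in C^\infty(\overline S,\mr^3)$ with $\de_k\to\de$ in $W^{2,2}$ and $I^0_\gamma(\de_k)\to I^0_\gamma(\de)$. Here one invokes the density/approximation theorem for $W^{2,2}$ isometric immersions --- this is exactly where the regularity hypothesis on $\partial S$ is used --- together with the fact that $\mathcal{Q}^\gamma$ is a continuous quadratic form (Definition~\ref{def:Qgamma}). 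So I may assume $\de$ is smooth; put $R:=(\nabla'\de\mid n^\de)\in C^1(\overline S,\SO3)$ and $G:=\secf^\de\in C^0(\overline S,\mr^{2\times2}_{\sym})$.

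Next, for this smooth $\de$ and a fixed $\delta>0$, I would select near--optimal correctors in the cell formula \eqref{defQ}: since $\mathcal{Q}^\gamma(G(x'))$ is an infimum over $(B,\phi)\in\mr^{2\times2}\times\mathcal{W}_{\sym,\gamma}^{1,2}(\Omega\times I,\mr^3)$, a measurable selection, an approximation in $\norm{\cdot}_{\#,\sym,\gamma,2}$, and a discretization of $x'$ on a grid of mesh $\delta$ give $B,\phi$ piecewise constant in $x'$ with $\phi(x',\cdot,\cdot)\in\stochsmooth(\Omega,\mr^3)\otimes C^\infty(I,\mr^3)$, uniformly bounded together with their derivatives, and $\delta$--almost optimal, i.e.
\[
  \int_S\!\int_{\Omega\times I}\!Q^0\!\Big(\randomelement,\ \iota\big(B(x')+x_3G(x')\big)+\sym\big(D_1\phi,D_2\phi,\tfrac1\gamma D_{x_3}\phi\big)(x',\randomelement,x_3)\Big)\td x_3\,\drandommeasure\,\td x'\ \le\ I^0_\gamma(\de)+\delta.
\]
Then I would take, with $\eps=\eps(h)$ from \eqref{def:gamma},
\[
  \de^h(x',x_3):=\de(x')+h\big(x_3\,n^\de(x')+d^h(x')\big)+h\eps\,R(x')\,\phi\big(x',T_{\eps^{-1}x'}\randomelement,x_3\big),
\]
where $d^h\colon S\to\mr^3$ is an in--plane field, adapted to the grid, chosen so that $d^h\in W^{1,2}$ and $\tfrac1h\sym\big(R^{T}\nabla'd^h\big)\to\iota(B)$ in $L^2(S,\mr^{3\times3})$. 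Every term added to $\de$ is $O(h)$ in $W^{1,2}(S\times I,\mr^3)$ --- the oscillating term has $\nabla'$--part $h\eps\cdot\eps^{-1}RD_\alpha\phi+o(h)$ and $\partial_3$--part $O(h\eps)$ --- so $\de^h\to\de$ strongly in $W^{1,2}(S\times I,\mr^3)$, which is the first assertion.

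For the energy I would compute, using $h/\eps\to\gamma$, the flatness identity $\partial_{\alpha\beta}\de=-G_{\alpha\beta}n^\de$ (vanishing Christoffel symbols) and $(R^{T}\partial_\beta n^\de)_\alpha=G_{\alpha\beta}$, $(R^{T}\partial_\beta n^\de)_3=0$,
\[
  \nabla_h\de^h=R(x')\big(\id_{3\times3}+hA^h+o(h)\big),\qquad \sym A^h=\iota\big(B(x')+x_3G(x')\big)+\sym\big(D_1\phi,D_2\phi,\tfrac1\gamma D_{x_3}\phi\big)(x',T_{\eps^{-1}x'}\randomelement,x_3),
\]
with $A^h$ uniformly bounded; then, by objectivity of $W$ and the Taylor expansion \eqref{Taylorexpansion} combined with Lemma~\ref{lem:1}, $\tfrac1{h^2}W(\randomelement,x'/\eps,\nabla_h\de^h)=Q^0(T_{\eps^{-1}x'}\randomelement,\sym A^h)+o(1)$ uniformly on $S\times I$. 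Because $B,\phi$ are piecewise constant in $x'$, $\sym A^h$ strongly two-scale converges (Remark~\ref{borism7}) to $\iota(B(x')+x_3G(x'))+\sym(D_1\phi,D_2\phi,\tfrac1\gamma D_{x_3}\phi)(x',\randomelement,x_3)\in L^2(\Omega)\otimes L^2(S\times I)$ --- for the oscillating summand this is the ergodic theorem (Theorem~\ref{thmergodic}) applied cellwise in $x'$ --- and $Q^0(\randomelement,\cdot)$ is convex with at most quadratic growth by (Q2), so the continuity part of Lemma~\ref{lowersemicontinuity} (in the form of Remark~\ref{borism6}) applies and, together with the $\delta$--almost optimality, gives $\limsup_{h\downarrow0}I^h(\de^h)\le I^0_\gamma(\de)+C\delta$. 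Letting $\delta\downarrow0$, diagonalizing, and comparing with the lower bound of Theorem~\ref{theoremlowbou} then forces $I^h(\de^h)\to I^0_\gamma(\de)$, the second assertion.

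The step I expect to be the real obstacle is the construction of $d^h$. Since the optimal $B(x')$ need not commute with $\secf^\de(x')$, the target $R(x')\iota(B(x'))$ is in general not a gradient, so $\iota(B)$ cannot simply be read off an in--plane displacement; the way around it (as in the periodic constructions) should exploit the flatness of $S$ --- equivalently $\det\secf^\de=0$, so $\de$ is developable and the optimal $B$ vanishes wherever $\secf^\de$ does --- to realize $\iota(B)$ up to $O(\delta)$ in $L^2$ by an admissible $W^{1,2}$ field, via its normal component and a careful grid--based gluing. Controlling this, together with the measurable selection and the approximation of $(B,\phi)$ by objects admissible as stochastic two-scale test functions, will be the technical heart; the Taylor reduction $W\rightsquigarrow Q^0$ and the identification of the two--scale limit of the energy should be routine given Lemma~\ref{lem:1}, Theorem~\ref{thmergodic}, and Lemma~\ref{lowersemicontinuity}.
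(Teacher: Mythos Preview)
Your overall strategy matches the paper's: reduce to a dense class of smooth isometries, discretize the macroscopic variable, pick near-optimal correctors $(B,\phi)$ cell by cell, build a Kirchhoff--Love ansatz with an oscillating correction $h\eps\,R\,\phi(T_{\eps^{-1}x'}\omega,x_3)$, expand via \eqref{Taylorexpansion}, and pass to the limit using the ergodic theorem through Lemma~\ref{lowersemicontinuity}/Remark~\ref{borism6} followed by diagonalization. That part is routine and agrees with the paper.

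Where you diverge is exactly the step you flag as the obstacle, and your proposed resolution would not work as written. An in-plane field $d^h$ with $\tfrac1h\sym(R^T\nabla' d^h)\to\iota(B)$ cannot in general be produced by ``grid-based gluing'': the matrix $R^T(\nabla' d^h,0)$ carries nonzero entries $n^\de\!\cdot\partial_\alpha d^h$ in its third row, so its symmetric part is not of the form $\iota(\cdot)$, and your displayed formula for $\sym A^h$ is therefore not correct without a further correction. The paper does not attempt to build such a $d^h$. Instead it reduces not merely to smooth isometries but to the dense class $\mathcal A(S)$ of Lemma~\ref{le71} (this is the content of \cite{Schmidt-07,Hornung-arma2,Hornung-arma1}): for $\de\in\mathcal A(S)$, every smooth symmetric $B$ vanishing near $\{\secf^\de=0\}$ equals $q_V^\de=\sym\big((\nabla'\de)^T\nabla'V\big)$ for some $V\in C^\infty(\overline S,\mr^3)$. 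One then takes the augmented ansatz
\[
  v^h=\de+hx_3 n^\de+h\big(V+hx_3\mu\big),\qquad \mu=(I-n^\de\!\otimes n^\de)(\partial_1V\wedge\partial_2\de+\partial_1\de\wedge\partial_2V),
\]
and adds the oscillating correction on top of $v^h$. The second-order term $h^2x_3\mu$ is essential: it forces $n^\de\!\cdot\nabla'V+\mu\!\cdot\nabla'\de=0$, which is precisely what kills the unwanted third-row contributions and makes $\sym G^h$ start with $\iota(q_V^\de+x_3\secf^\de)=\iota(B+x_3\secf^\de)$ (see \eqref{fin-1} and the cancellation noted immediately after). Your intuition that the optimal $B$ may be taken to vanish on $\{\secf^\de=0\}$ is correct and is exactly what makes $B$ admissible for this mechanism; but the resolution is the global displacement $V$ together with the $\mu$-correction, not a local gluing.
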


\subsection{Identifications of two-scale limits and proof of Theorem~\ref{theoremlowbou}}
\subsubsection{Two-scale limits of the most important terms}
In this section we explicitly compute the two-scale limits, which will be needed to prove the lower bound stated by Theorem~\ref{theoremlowbou}.
\begin{lemma}\label{lem:(31)}
Let $S'\subset \mr^2$ be a bounded domain. Let $(\widetilde u^h)\subset W^{2,2}(S')$, $(R^h) \subset L^\infty(S', \SO3)$ and $(\widetilde R^h )\subset W^{1,2}(S', \mr^{3\times3})$ with
\begin{align*}	
h^2 \norm { \widetilde u^h}_{W^{2,2}(S')}^2+ \norm { \nabla' \widetilde u^h - (\widetilde R^h e_1, \widetilde R^h e_2)}_{L^2(S')}^2 +\norm{ R^h - \widetilde R^h}_{L^2(S')}^2\phantom{\hspace{2cm}}\\
 + h^2 \norm { R^h - \widetilde R^h}_{L^\infty(S')}^2  + h^2 \norm{ \nabla' \widetilde R^h}_{L^2(S')}^2 \leq C h^2.
    \end{align*}
Then there exist a (not relabeled) subsequence and functions $w^0_\alpha \in L^2(S')$, $\phi^{ \widetilde u} \in L^2(S', \stochsobolev^{2,2}(\Omega, \mr^3))$, $\phi^{\widetilde R} \in L^2(S' ,  \stochsobolev^{1,2}(\Omega, \mr^3))$ such that for $\alpha=1,2$ we have
\begin{eqnarray*}   
  \frac {\scalar{ R^h e_\alpha, \widetilde R^h e_3 } + \scalar {R^h e_3, \partial_\alpha \widetilde u^h}}h &\wtwoscale& \frac 1 \gamma w^0_\alpha + \frac 1 \gamma \scalar* {R e_3, D_{\alpha}\phi^{\widetilde u} } + \frac 1 \gamma \scalar { \phi^{\widetilde Re_3}, R e_\alpha },\\
  \nabla^2 \widetilde{u}^h &\wctwoscale& \nabla^2_{\omega} \phi^{\widetilde{u}}, \\
  \nabla (\widetilde{R}^he_3) & \wctwoscale& \nabla_{\omega}\phi^{\widetilde{R}e_3},
\end{eqnarray*}
and $\scalar{R e_3, D_{\alpha}\phi^{ \widetilde u} } + \scalar { \phi^{\widetilde Re_3}, R e_\alpha } \in L^2(S'\times \Omega ,\mr^3)$. 
\end{lemma}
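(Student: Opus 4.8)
Write $X^h_\alpha := \tfrac1h\bigl(\scalar{R^h e_\alpha,\widetilde R^h e_3}+\scalar{R^h e_3,\partial_\alpha\widetilde u^h}\bigr)$ for the sequence in the first asserted convergence, and set $\delta^h:=R^h-\widetilde R^h$, $\eta^h_\alpha:=\partial_\alpha\widetilde u^h-\widetilde R^h e_\alpha$. The plan is first to extract the limit objects. The hypotheses bound $\widetilde u^h$ in $W^{2,2}(S',\mathbf{R}^3)$, $\widetilde R^h$ in $W^{1,2}(S')\cap L^\infty(S')$ (for the $L^\infty$-bound, $\|\widetilde R^h\|_{L^\infty}\le\|R^h\|_{L^\infty}+\|\delta^h\|_{L^\infty}\le 1+C$) and $R^h$ in $L^\infty(S',\SO3)$, with $\|\delta^h\|_{L^2}+\|\eta^h_\alpha\|_{L^2}\le Ch$. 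Passing to a subsequence, $\widetilde u^h\to u^0$ in $W^{1,2}$ and $\widetilde R^h\to R$ in $L^2$ by Rellich, $\widetilde u^h\weakly u^0$ in $W^{2,2}$, $\widetilde R^h\weakly R$ in $W^{1,2}$, and hence also $R^h\to R$ in $L^2$ and a.e.; thus $R\in W^{1,2}(S',\SO3)$, $\partial_\alpha u^0=Re_\alpha$, $u^0\in W^{2,2}_{\textrm{iso}}(S')$ and $Re_3=n^{u^0}$. By Lemma~\ref{lem:twoscalesecondgradient} (componentwise) there is $\phi^{\widetilde u}\in L^2(S',\stochsobolev^{2,2}(\Omega,\mathbf{R}^3))$ with $\nabla^2\widetilde u^h\wtwoscale\nabla^2 u^0+\nabla^2_{\omega}\phi^{\widetilde u}$; since $\int_\Omega\nabla^2_{\omega}\phi^{\widetilde u}=0$ and the $\Omega$-mean of a two-scale limit is its weak $L^2$-limit, this is the second convergence $\nabla^2\widetilde u^h\wctwoscale\nabla^2_{\omega}\phi^{\widetilde u}$. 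Likewise Lemma~\ref{lem:twoscalegradient} applied to $\widetilde R^h e_1,\widetilde R^h e_2,\widetilde R^h e_3$ gives $\phi^{\widetilde R e_j}\in L^2(S',\stochsobolev^{1,2}(\Omega,\mathbf{R}^3))$ with $\nabla(\widetilde R^h e_j)\wtwoscale\nabla(Re_j)+\nabla_{\omega}\phi^{\widetilde R e_j}$; the case $j=3$ is the third convergence, and $\phi^{\widetilde R e_1},\phi^{\widetilde R e_2}$ are auxiliary. From the first of these displays one also reads off $\nabla(\partial_\alpha\widetilde u^h)\wtwoscale\nabla(\partial_\alpha u^0)+\nabla_{\omega}(D_\alpha\phi^{\widetilde u})$.

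The heart of the argument is the first convergence. Using $\scalar{R^h e_\alpha,R^h e_3}=\scalar{R^h e_3,R^h e_\alpha}=0$ together with $\widetilde R^h=R^h-\delta^h$ and $\partial_\alpha\widetilde u^h=R^h e_\alpha+\eta^h_\alpha-\delta^h e_\alpha$ one rewrites
\[
  X^h_\alpha=-\scalar[\big]{R^h e_\alpha,\tfrac1h\delta^h e_3}-\scalar[\big]{R^h e_3,\tfrac1h\delta^h e_\alpha}+\scalar[\big]{R^h e_3,\tfrac1h\eta^h_\alpha},
\]
which is bounded in $L^2(S')$, so by Proposition~\ref{lem:compactness} it two-scale converges (on a subsequence) to some $Y_\alpha\in L^2(\Omega\times S')$, and I set $\tfrac1\gamma w^0_\alpha:=\int_\Omega Y_\alpha\,\drandommeasure\in L^2(S')$. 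Because $R^h\to R$ strongly in $L^2$ and is $L^\infty$-bounded, Proposition~\ref{propositionreasonable} lets me replace $R^h$ by $R$; writing $\Delta$ (resp. $E_\alpha$) for the two-scale limit of $\tfrac1h\delta^h$ (resp. $\tfrac1h\eta^h_\alpha$),
\[
  Y_\alpha=-\scalar{Re_\alpha,\Delta e_3}-\scalar{Re_3,\Delta e_\alpha}+\scalar{Re_3,E_\alpha}=-2\bigl(\sym(R^T\Delta)\bigr)_{\alpha 3}+\scalar{Re_3,E_\alpha}.
\]
It remains to determine the oscillatory parts of $E_\alpha$ and $\Delta$, which by Lemma~\ref{lem:divdense} amounts to testing against $\div_{\omega}v$, $v\in\stochsmooth(\Omega,\mathbf{R}^2)$ (times $\varphi\in C^\infty_0(S')$). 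For $E_\alpha$, note that $\tfrac1h\eta^h_\alpha$ is bounded in $L^2$ while $\nabla\eta^h_\alpha=\nabla(\partial_\alpha\widetilde u^h)-\nabla(\widetilde R^h e_\alpha)\wtwoscale\nabla_{\omega}(D_\alpha\phi^{\widetilde u}-\phi^{\widetilde R e_\alpha})$ (the slow parts cancel since $\partial_\alpha u^0=Re_\alpha$); the oscillation argument underlying the lemma controlling $\eps^{-1}(f^\eps-f^0)$, rescaled by $\eps/h\to1/\gamma$, then gives $E_\alpha=(\textrm{deterministic})+\tfrac1\gamma(D_\alpha\phi^{\widetilde u}-\phi^{\widetilde R e_\alpha})$. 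For $\Delta$, the identity $(R^h)^T R^h=\id$ forces $\tfrac1h\bigl(\id-(\widetilde R^h)^T\widetilde R^h\bigr)=2\sym\bigl((R^h)^T\tfrac1h\delta^h\bigr)-\tfrac1h(\delta^h)^T\delta^h\wtwoscale2\sym(R^T\Delta)$ (the last term is $o(1)$ in $L^1$), so its $(\alpha,3)$-entry identifies $2(\sym(R^T\Delta))_{\alpha 3}$ with the two-scale limit of $-\tfrac1h\scalar{\widetilde R^h e_\alpha,\widetilde R^h e_3}$; the latter I compute by the integration-by-parts scheme of Lemma~\ref{lem:product} — write $(\div_{\omega}v)(T_{\eps^{-1}x}\omega)=\eps\,\div_{x}\bigl(v(T_{\eps^{-1}x}\omega)\bigr)$, integrate by parts in $x$, insert the two-scale gradient limits obtained above (whose slow part is $\partial_\beta\scalar{Re_\alpha,Re_3}=0$), integrate by parts in $\omega$, and use $\eps/h\to1/\gamma$ — which expresses the oscillatory part of $2(\sym(R^T\Delta))_{\alpha 3}$ through $\phi^{\widetilde R e_\alpha}$ and $\phi^{\widetilde R e_3}$. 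Substituting both identifications into $Y_\alpha=-2(\sym(R^T\Delta))_{\alpha 3}+\scalar{Re_3,E_\alpha}$, the $\phi^{\widetilde R e_\alpha}$-contributions cancel (using $\scalar{\phi^{\widetilde R e_\alpha},Re_3}=\scalar{Re_3,\phi^{\widetilde R e_\alpha}}$) and what survives is $\tfrac1\gamma\bigl(\scalar{Re_3,D_\alpha\phi^{\widetilde u}}+\scalar{\phi^{\widetilde R e_3},Re_\alpha}\bigr)$ for the oscillatory part and $\tfrac1\gamma w^0_\alpha$ for the deterministic part, which is the first asserted convergence.

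Finally, for the $L^2$-membership: the object $\scalar{Re_3,D_\alpha\phi^{\widetilde u}}+\scalar{\phi^{\widetilde R e_3},Re_\alpha}$ has been exhibited, through the Lemma~\ref{lem:product}-style double integration by parts, both as $\int_{\Omega\times S'}Y_\alpha(\div_{\omega}v)\varphi$ with $Y_\alpha\in L^2(\Omega\times S')$ and as the distributional pairing $\int(\scalar{Re_3,D_\alpha\phi^{\widetilde u}}+\scalar{\phi^{\widetilde R e_3},Re_\alpha})(\div_{\omega}v)\varphi$ of Remark~\ref{borism2}; closedness of $\nabla_{\omega}$, exactly as in that remark, then shows $\scalar{Re_3,D_\alpha\phi^{\widetilde u}}+\scalar{\phi^{\widetilde R e_3},Re_\alpha}\in L^2(S'\times\Omega)$. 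I expect the main obstacle to be the middle step: $R^h$ is only piecewise constant, so it must be traded for $\widetilde R^h$ (and for $R$) and the resulting $O(h)$-in-$L^2$ errors carried through the integrations by parts; moreover neither $\tfrac1h\eta^h_\alpha$ nor $\tfrac1h\delta^h$ is controlled in $W^{1,2}$, so $E_\alpha$ and $\Delta$ cannot simply be read off from the gradient two-scale lemmas but must be pinned down by the more delicate oscillation/cancellation arguments — these are precisely the ``further cancelation effects'' announced in the introduction — and keeping all the factors $1/\gamma$ and all signs consistent is where the real work lies.
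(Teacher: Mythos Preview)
Your approach is correct, but it takes a substantially longer detour than the paper's. The paper's proof rests on a single algebraic identity that you miss:
\[
\scalar{R^h e_\alpha,\widetilde R^h e_3}+\scalar{R^h e_3,\partial_\alpha\widetilde u^h}
=\scalar{R^h e_3-\widetilde R^h e_3,\ \partial_\alpha\widetilde u^h-R^h e_\alpha}+\scalar{\widetilde R^h e_3,\partial_\alpha\widetilde u^h}.
\]
The first term on the right is a product of two $O(h)$-in-$L^2$ factors, one of which ($\delta^h e_3$) is also $L^\infty$-bounded, so after dividing by $h$ it goes to zero in $L^1$ and hence two-scale converges to $0$. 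The second term is the product of $f^h=\widetilde R^h e_3$ and $g^h_\alpha=\partial_\alpha\widetilde u^h$, \emph{both} of which are bounded in $W^{1,2}$, so Lemma~\ref{lem:product} (together with Remark~\ref{borism2}) applies directly and produces the limit $\tfrac1\gamma w^0_\alpha+\tfrac1\gamma\scalar{Re_3,D_\alpha\phi^{\widetilde u}}+\tfrac1\gamma\scalar{\phi^{\widetilde Re_3},Re_\alpha}$ in one stroke; Lemma~\ref{lem:twoscalesecondgradient} then upgrades $\phi^{g_\alpha}$ to $D_\alpha\phi^{\widetilde u}$.

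By contrast, you decompose $X^h_\alpha$ through $\delta^h$ and $\eta^h_\alpha$, introduce the auxiliary correctors $\phi^{\widetilde Re_1},\phi^{\widetilde Re_2}$, identify the oscillatory parts of $E_\alpha$ and of $2(\sym R^T\Delta)_{\alpha 3}$ by two separate Lemma~\ref{lem:product}-style computations, and then watch the $\phi^{\widetilde Re_\alpha}$-contributions cancel. This works, and each step is sound (including the $L^2$-membership via Remark~\ref{borism2}), but the cancellation you engineer is precisely what the paper's identity builds in from the start by trading $R^h$ for $\widetilde R^h$ \emph{before} splitting into factors. The paper's route buys simplicity: no auxiliary objects, no cancellation to track, and none of the bookkeeping with signs and factors of $1/\gamma$ that you flag as ``where the real work lies''. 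Your route buys nothing extra here, though the general scheme of separately identifying $\Delta$ via $I-(\widetilde R^h)^T\widetilde R^h$ is exactly what is needed for the harder $(3,3)$-component in Lemma~\ref{lem:(33)}.
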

\begin{proof}
Notice that
\[
 \scalar{ R^h e_\alpha, \widetilde R^h e_3 } + \scalar {R^h e_3, \partial_\alpha \widetilde u^h}  = \scalar{ R^h e_3 - \widetilde R^h e_3, \partial_\alpha\widetilde u^h - R^h e_\alpha} + \scalar {  \widetilde R^h e_3, \partial_\alpha \widetilde u^h}.
\]
The left-hand side is of order $h$, while the first term on the right-hand side is of order $h^2$. Thus the second term is of order $h$. After dividing by $h$ the first term on the right-hand side converges strongly to $0$ as $h\to 0$ and thus does not contribute to the two-scale limit.
We define 
\[
  f^h:=  \widetilde R^h e_3, \quad g^h_{\alpha} := \partial_\alpha \widetilde u^h \quad \alpha = 1,2,
\]
and notice that after extracting a subsequence the components $f^h_i,{(g^h_\alpha)}_i$, $i = 1,2,3$ satisfy the assumptions of Lemma~\ref{lem:product} (see also Remark \ref{borism2}).
Thus
\[
  \frac 1 h \scalar*{f^h, g^h_\alpha} = \frac \eps h \paren*{\frac 1 \eps \scalar*{f^h, g_\alpha^h}} \wtwoscale \frac 1 \gamma w^0_\alpha + \frac 1 \gamma \scalar* {f, \phi^{g_\alpha} } + \frac 1 \gamma \scalar { \phi^f, g_\alpha },
\]
for some $w^0_\alpha \in L^2(S')$ and $\phi^f, \phi^{g_\alpha} \in L^2(S' , \stochsobolev^{1,2}(\Omega,\mr^3))$ such that  $\scalar* {f, \phi^{g_\alpha} } + \scalar { \phi^f, g_\alpha } \in L^2 (S' \times \Omega, \mr^3)$. From Lemma~\ref{lem:twoscalesecondgradient} we additionally deduce that there exists $ \phi^g \in L^2(S', \stochsobolev^{2,2}(\Omega,\mr^3))$ with $D_\alpha  \phi^g = \phi^{g_\alpha}$ for $\alpha = 1,2$. This yields
\[
  \scalar{ R^h e_\alpha, \widetilde R^h e_3 } + \scalar {R^h e_3, \partial_\alpha \widetilde u^h}\wtwoscale \frac 1 \gamma w^0_\alpha + \frac 1 \gamma \scalar* {R e_3, D_{\alpha}\phi^{\widetilde u} } + \frac 1 \gamma \scalar{ \phi^{\widetilde Re_3}, Re_\alpha }
\]
for some $ w^0_\alpha  \in L^2(S'),\ \phi^{\widetilde Re_3} \in L^2(\Omega \times S', \mr^3)$ and $\phi^{ \widetilde u} \in L^2(S', \stochsobolev^{2,2}(\Omega, \mr^3))$. 
\end{proof}

The following lemma identifies the most sensitive term in our analysis.
\begin{lemma}\label{lem:(33)}
Let $S' \subset \mr^2$ be a bounded domain. Let $(\widetilde R^h) \subset W^{1,2}(S', \mr^{3\times3})$, and let $(R^h) \subset L^\infty(S', \SO3)$ be such that 
for each $h > 0$ the map $R^h$ is piecewise constant on each cube $x' + h[0,1)^2$ with $x' \in h\mz^2$. Assume further that for each $\xi \in \mr^2$ with $\abs {\xi}_\infty \leq h$ we have
\begin{align*}	
 \norm{ R^h - \widetilde R^h}_{L^2(S')}^2 + h^2 \norm { R^h - \widetilde R^h}_{L^\infty(S')}^2  + h^2 \norm{ \nabla' \widetilde R^h}_{L^2(S')}^2 \phantom{\hspace{2cm}}\\
  +\norm{ R^h( \cdot + \xi) - R^h}_{L^2(S^h)}^2 \leq C h^2
    \end{align*}
for each sequence of subdomains $ S^h \subset S'$ which satisfy $\dist( S^h, \partial  S') \geq ch$ for some $c > 0$. 
Finally assume that $\widetilde R^h$ is the mollification of $R^h$ on scale $h$.

Then there exist $w^0_3 \in L^2(S')$ and $\phi^{\widetilde R e_3} \in L^2(S', \stochsobolev^{1,2}(\Omega, \mr^3))$ such that on a subsequence
\begin{eqnarray*}	
\frac {\scalar{ R^h e_3,\widetilde R^h e_3 } - 1} h &\wtwoscale & \frac 1 \gamma w^0_3 +  \frac { 1} { \gamma} \scalar {R e_3,  \phi^{\widetilde Re_3}}, \\
\nabla (\widetilde{R}^h e_3) &\wctwoscale &  \nabla \phi^{\widetilde{R}e_3},
  \end{eqnarray*}
with $\scalar {R e_3,  \phi^{\widetilde R e_3}} \in L^2(S' \times \Omega)$. 
\end{lemma}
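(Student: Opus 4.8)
The plan is to get the second convergence from Lemma~\ref{lem:twoscalegradient} applied to the $W^{1,2}(S')$-bounded sequence $(\widetilde R^h e_3)$, and to show that the scalar quantity $\langle R^h e_3,\widetilde R^h e_3\rangle-1$ is not merely $o(h)$ but $O(h^2)$ in $L^1$ on subdomains compactly contained in $S'$ — the stochastic substitute for the predictable convergence rate available in the periodic case — which forces the first two-scale limit to vanish. Note that, unlike in Lemma~\ref{lem:(31)}, one cannot directly apply Lemma~\ref{lem:product} here: only one of the two factors, $\widetilde R^h e_3$, lies in $W^{1,2}$, and the bound $\tfrac1h\||\widetilde R^h e_3|^2-1\|_{L^1}<\infty$ is destroyed on passing to individual components.

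First I would extract a subsequence. Since $R^h\in\SO3$ and $\widetilde R^h$ is a mollification of $R^h$ on scale $h$, we have $|\widetilde R^h e_3|\le1$ a.e., and $h^2\|\nabla'\widetilde R^h\|_{L^2(S')}^2\le Ch^2$ gives that $(\widetilde R^h e_3)$ is bounded in $W^{1,2}(S')$. Applying Lemma~\ref{lem:twoscalegradient} componentwise, and using $\|R^h-\widetilde R^h\|_{L^2(S')}\le Ch$, yields on a subsequence a map $R\in W^{1,2}(S',\SO3)$ with $R^h\to R$ in $L^2(S')$, $\widetilde R^h\weakly R$ in $W^{1,2}(S')$, and $\phi^{\widetilde Re_3}\in L^2(S',\stochsobolev^{1,2}(\Omega,\mr^3))$ such that $\nabla(\widetilde R^h e_3)\wctwoscale\nabla_\omega\phi^{\widetilde Re_3}$; this is already the second asserted convergence.

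Next I would prove the cancellation. From $|R^h e_3|=|Re_3|=1$ one has the pointwise identity
\[
  \langle R^h e_3,\widetilde R^h e_3\rangle-1=\tfrac12\bigl(|\widetilde R^h e_3|^2-1\bigr)-\tfrac12\,|R^h e_3-\widetilde R^h e_3|^2.
\]
On any $S''\Subset S'$, the last term divided by $h$ has $L^1$-norm $\le\tfrac1{2h}\|R^h-\widetilde R^h\|_{L^2(S'')}^2\le Ch$ and, using $\|R^h-\widetilde R^h\|_{L^\infty}\le C$, $L^2$-norm $\le C$. For the first term, $1-|\widetilde R^h e_3(x)|^2$ is the variance $\tfrac12\int\!\int K_h(x-y)K_h(x-z)\,|R^h(y)e_3-R^h(z)e_3|^2\,\td y\,\td z$ of the mollified field; integrating over $S''$, changing variables and iterating the translation estimate $\|R^h(\cdot+\xi)-R^h\|_{L^2}^2\le Ch^2$ for $|\xi|_\infty\lesssim h$ gives $\int_{S''}(1-|\widetilde R^h e_3|^2)\le Ch^2$, so $\tfrac1h(1-|\widetilde R^h e_3|^2)$ is bounded in $L^2(S'')$ and converges to $0$ in $L^1(S'')$. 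Hence $\tfrac1h(\langle R^h e_3,\widetilde R^h e_3\rangle-1)$ is bounded in $L^2_{\loc}(S')$ and tends to $0$ in $L^1_{\loc}(S')$; since for $g\in\stochsmooth(\Omega)\otimes C^\infty_0(S')$ the corresponding two-scale integral is bounded by $\|g\|_{L^\infty}$ times the $L^1$-norm on $\supp g$, this sequence stochastically two-scale converges to $0$ (using Proposition~\ref{lem:compactness} and an exhaustion of $S'$).

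It remains to rewrite this $0$ in the required form, i.e.\ to show $\langle Re_3,\phi^{\widetilde Re_3}\rangle$ is independent of $\omega$ and lies in $L^2(\Omega\times S')$, and then to put $w^0_3:=-\langle Re_3,\phi^{\widetilde Re_3}\rangle$. For the $\omega$-independence I would compute the two-scale limit of $\langle\widetilde R^h e_3,\partial_i\widetilde R^h e_3\rangle=\tfrac12\partial_i(|\widetilde R^h e_3|^2-1)$ in two ways: by Proposition~\ref{propositionreasonable} and Lemma~\ref{lem:twoscalegradient} it equals $\langle Re_3,\partial_i Re_3+D_i\phi^{\widetilde Re_3}\rangle=D_i\langle Re_3,\phi^{\widetilde Re_3}\rangle$ (since $\langle Re_3,\partial_i Re_3\rangle=\tfrac12\partial_i|Re_3|^2=0$ and $Re_3$ does not depend on $\omega$), while integration by parts against $g(T_{\eps^{-1}x}\omega)\varphi(x)$, combined with $\tfrac h\eps\to\gamma$ and the already-established $\tfrac1h(|\widetilde R^h e_3|^2-1)\wtwoscale0$, shows it equals $0$; hence $\nabla_\omega\langle Re_3,\phi^{\widetilde Re_3}\rangle=0$, so the pairing is $\omega$-constant, and its $L^2(\Omega\times S')$-membership (hence $w^0_3\in L^2(S')$) then follows from the closedness of $\nabla_\omega$ as in Remark~\ref{borism2}. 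I expect this final step — turning the pairing $\langle Re_3,\phi^{\widetilde Re_3}\rangle$, which a priori makes sense only after integration by parts, into a genuine $L^2$ function, together with the variance estimate of the previous paragraph that replaces the periodic convergence rate — to be the main point of the argument.
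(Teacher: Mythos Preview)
Your approach is correct and takes a genuinely different route from the paper's proof. The paper identifies the oscillatory part of the two-scale limit by testing $f^h$ against $(D_\alpha b)(T_{\eps^{-1}x'}\omega)\varphi(x')$, writing $(D_\alpha b)(T_{\eps^{-1}x'}\omega)=\eps\,\partial_\alpha[b(T_{\eps^{-1}x'}\omega)]$ and integrating by parts cube by cube, using the piecewise constancy of $R^h$ on the $h$-grid; this produces a bulk term converging to $\tfrac1\gamma\int\langle Re_3,\phi^{\widetilde Re_3}\rangle D_\alpha b\,\varphi$ plus interface terms, and the latter are shown to vanish through a combinatorial cancellation (the $I+II+III$ computation) that exploits that $\fint_{\Gamma_z}\widetilde R^h$ is a symmetric combination of neighbouring values of $R^h$, so that $|R^h(z)e_3|^2-|R^h(z+he_\alpha)e_3|^2=0$ can be used. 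You instead invoke the pointwise identity $\langle R^he_3,\widetilde R^he_3\rangle-1=\tfrac12(|\widetilde R^he_3|^2-1)-\tfrac12|R^he_3-\widetilde R^he_3|^2$ and show each piece is $O(h^2)$ in $L^1_{\loc}$ --- the second directly from $\|R^h-\widetilde R^h\|_{L^2}^2\le Ch^2$, the first via the variance formula for the mollification together with the translation hypothesis --- yielding the sharper fact that the full two-scale limit is $0$; your second computation $\langle Re_3,D_i\phi^{\widetilde Re_3}\rangle=0$ then guarantees compatibility with the $\phi^{\widetilde Re_3}$ fixed by the second convergence, so one may take $w^0_3=0$ and $\langle Re_3,\phi^{\widetilde Re_3}\rangle=0$. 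Your argument is more elementary, never uses the piecewise-constant structure of $R^h$, and actually strengthens the conclusion; the paper's argument stays closer in spirit to Lemma~\ref{lem:(31)} and to the mechanism of Lemma~\ref{lem:product}, and leaves $w^0_3$ as a genuine (possibly nonzero) weak limit.
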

\begin{proof}
From
\[
  f^h := \frac {\scalar{ R^he_3, \widetilde R^he_3} -   1} h= \frac 1 h\scalar { R^h e_3, \widetilde R^he_3 - R^h e_3}
\]
we easily see that $(f^h)$ is uniformly bounded in $L^2(S')$. Thus up to a subsequence we have
\[
  f^h \weakly \frac 1 \gamma w^0_3 \quad \text{ and } f^h \wtwoscale \frac 1 \gamma w^0_3 + \phi
\]
for some $w^0_3 \in L^2(S')$ and $\phi \in L^2(\Omega \times S')$. To further identify $\phi$ we test the sequence against derivatives. For this fix some $b \in \stochsmooth(\Omega)$ and $\varphi \in C^\infty_0(S')$. Let $h > 0$ be small enough and such that there is a subdomain $S^h \subset S'$ with $\dist(S^h, \partial S') \geq ch$ and the compact support $K$ of $\varphi$ is contained in $S^h$.

First note that
\begin{align*}	
\int_{K} f^h(x') (D_{\alpha} b)(T_{\eps^{-1}x'} \omega)\varphi(x') \td x' 
&= 
\eps\int_{K} f^h(x') \partial_\alpha [b(T_{\eps^{-1}x'} \omega)\varphi(x')] \td x' \\
&\quad-\eps\int_{K} f^h(x') b(T_{\eps^{-1}x'} \tomega)\partial_\alpha\varphi(x') \td x' .
\end{align*}
The last term converges to 0, and so we focus on the first. For this we define $q_z := (z +  h[0,1)^2)\intersect {K}$ and compute

\begin{align*}	
\eps\int_{K} f^h \partial_\alpha [b(T_{\eps^{-1}x'} \tomega)\varphi] \td x' 
  &=\frac {\eps}{h} \int_{K} \scalar {R^h e_3, \widetilde R^h e_3} \partial_{\alpha}  [b(T_{\eps^{-1}x'} \omega)\varphi] \td x'\\
  &=\frac {\eps}{h}\sum_{z \in h\mz^2 } \int_{q_z} \scalar {R^h(z) e_3, \widetilde R^h e_3} \partial_{\alpha}  [b(T_{\eps^{-1}x'} \omega)\varphi] \td x'\\
  &=\frac {\eps}{h}\sum_{z \in h\mz^2 } \int_{q_z} \partial_\alpha \brackets*{\scalar {R^h(z) e_3, \widetilde R^h e_3} b(T_{\eps^{-1}x'} \omega)\varphi} \td x'\\
  &\quad -\frac {\eps}{h}\sum_{z \in h\mz^2 } \int_{q_z}  \brackets*{\scalar {R^h(z) e_3, \partial_\alpha\widetilde R^h e_3} b(T_{\eps^{-1}x'} \omega)\varphi} \td x'.
\end{align*}
For the last term we use Lemma~\ref{lem:twoscalegradient} to conclude there exists $\phi^{\widetilde Re_3} \in L^2(S', \stochsobolev^{1,2}( \Omega, \mr^3))$ with
\[
 \nabla' \widetilde R^h e_3 \wtwoscale \nabla' Re_3 + \nabla_\omega \phi^{\widetilde Re_3} . 
\]
Together with $R^h \to R $ strongly in $L^2(S')$, we obtain 
\begin{align*}	
&-\frac {\eps}{h}\sum_{z \in h\mz^2 } \int_{q_z}  \brackets*{\scalar {R^h(z) e_3, \partial_\alpha\widetilde R^h e_3} b(T_{\eps^{-1}x'} \omega)\varphi} \td x' \\
& = -\frac { \eps } h\int_{{K}}  \brackets*{\scalar {R^h e_3, \partial_\alpha\widetilde R^h e_3} b(T_{\eps^{-1}x'} \omega)\varphi} \td x'\\
&\quad \xrightarrow{\quad} \frac { -1} { \gamma} \int_\Omega \int_{K} \brackets*{\scalar {R e_3, \partial_\alpha R e_3 + D_{\alpha} \phi^{\widetilde R e_3}} b(\omega)\varphi} \td x'\drandommeasure\\
& \qquad= \frac { -1} { \gamma} \int_\Omega \int_{K} \brackets*{\scalar {R e_3, D_{\alpha} \phi^{\widetilde Re_3}} b(\omega)\varphi} \td x'\drandommeasure\\
& \qquad= \frac { 1} { \gamma} \int_\Omega \int_{K} \brackets*{\scalar {R e_3,  \phi^{\widetilde Re_3}} D_{\alpha}b(\omega)\varphi} \td x'\drandommeasure.
\end{align*}
Next we show that
\[
  \frac {\eps}{h}\sum_{z \in h\mz^2 } \int_{q_z} \partial_\alpha \brackets*{\scalar {R^h(z) e_3, \widetilde R^h e_3} b(T_{\eps^{-1}x'} \tomega)\varphi} \td x' \xrightarrow{h \to 0} 0.
\]
If $q_z \neq \emptyset$ then let $\Gamma^{\textrm{pos}}_z, \Gamma^{\textrm{neg}}_z$ be the boundary of $q_z$ perpendicular to $e_\alpha$ with normals $e_\alpha$ resp.\ $-e_\alpha$, else $\Gamma^{\textrm{pos}}_z,\Gamma^{\textrm{neg}}_z := \emptyset$. The fundamental theorem together with Fubini's Theorem yields
\begin{align*}	
 &\frac {\eps}{h}\sum_{z \in h\mz^2 } \int_{q_z} \partial_\alpha \brackets*{\scalar {R^h(z) e_3, \widetilde R^h e_3} b(T_{\eps^{-1}x'} \omega)\varphi} \td x'  \\
 &=\frac {\eps}{h}\sum_{z \in h\mz^2 } \int_{\Gamma^{\textrm{pos}}_z}  \scalar {R^h(z) e_3, \widetilde R^h e_3} b(T_{\eps^{-1}x'} \omega)\varphi \td x'\\
 &\quad-\frac {\eps}{h}\sum_{z \in h\mz^2 } \int_{\Gamma^{\textrm{neg}}_z}  \scalar {R^h(z) e_3, \widetilde R^h e_3} b(T_{\eps^{-1}x'} \omega)\varphi \td x'.
\end{align*}
We rearrange the sum and obtain
\begin{align*}	
 &\frac {\eps}{h}\sum_{z \in h\mz^2 } \int_{\Gamma^{\textrm{pos}}_z}  \scalar {R^h(z) e_3, \widetilde R^h e_3} b(T_{\eps^{-1}x'} \omega)\varphi \td x'\\
 &\quad-\frac {\eps}{h}\sum_{z \in h\mz^2 } \int_{\Gamma^{\textrm{neg}}_z}  \scalar {R^h(z) e_3, \widetilde R^h e_3} b(T_{\eps^{-1}x'} \omega)\varphi \td x' \\
 &=\frac {\eps}{h}\sum_{z \in h\mz^2 } \scalar* {R^h(z) e_3 - R^h( z + h e_\alpha)e_3, \int_{\Gamma^{\textrm{pos}}_z}  \widetilde R^h e_3 b(T_{\eps^{-1}x'} \omega)\varphi \td x'}.
\end{align*}
By assumption
\[
\frac { R^h(z)  - R^h( z + h e_\alpha) } h 
\]
is uniformly bounded in $z$ and $h$, which implies 
\[
  \limsup_{ h \downarrow 0 } \sum_{z \in h\mz^2 } \abs[\big]{ R^h(z)  - R^h( z + h e_\alpha)}^2 < \infty.
\]
Denote by $\mathcal Z \subset h\mz^2$ the $z$-values such that $\Gamma^{\textrm{pos}}_z $ has positive measure . Using the trace inequality and the Poincar\'e's inequality afterwards, we get for such $z$ that
\[
\int_{\Gamma^{\textrm{pos}}_z}  \abs [\Big] { \widetilde R^h  - \frac 1 {\abs{\Gamma^{\textrm{pos}}_z}}\int_{\Gamma^{\textrm{pos}}_z} \widetilde R^h }^2 \leq C h \int_{q_z} \abs {\nabla \widetilde R^h}^2 \td x'.
\]
Combing both previous statements we see that
\begin{align*}	
&\lim_{h\to0} \frac {\eps}{h}\sum_{z \in h\mz^2 } \scalar* {R^h(z) e_3 - R^h( z + h e_\alpha)e_3, \int_{\Gamma^{\textrm{pos}}_z}  \widetilde R^h e_3 b(T_{\eps^{-1}x'} \omega)\varphi \td x'}\\
&  =\frac 1 \gamma \lim_{h\to0} \sum_{z \in \mathcal Z } \scalar* {R^h(z) e_3 - R^h( z + h e_\alpha)e_3, \frac 1 {\abs{\Gamma^{\textrm{pos}}_z}}\int_{\Gamma^{\textrm{pos}}_z}\widetilde R^h e_3} \int_{\Gamma^{\textrm{pos}}_z}   b(T_{\eps^{-1}x'} \omega)\varphi \td x'.
\end{align*}
Noticing the uniform bound
\[
\abs* { \int_{\Gamma^{\textrm{pos}}_z}   b(T_{\eps^{-1}x'} \omega)\varphi \td x'}\leq h \norm{ b }_{L^\infty(\Omega)} \norm{ \varphi}_{L^\infty(S')}
\]
we only need to show that
\[
\limsup_{h\downarrow 0} \sum_{z \in \mathcal Z } \abs*{\scalar* {R^h(z) e_3 - R^h( z + h e_\alpha)e_3, \frac 1 {\abs{\Gamma^{\textrm{pos}}_z}}\int_{\Gamma^{\textrm{pos}}_z}\widetilde R^h e_3}} < \infty
\]
to conclude the vanishing of the product. For this bound note that $\widetilde R^h$ is the mollification of $R^h$ on scale $h$. Therefore there exist $z$-independent constants $0 \leq \eta_1, \eta_2, \eta_3 \leq 1$
with
\begin{align*}	
\frac 1 {\abs{\Gamma^{\textrm{pos}}_z}}\int_{\Gamma^{\textrm{pos}}_z}\widetilde R^h = &\eta_1 \paren[\Big]{R^h(z) + R^h(z+h e_\alpha)} \\
    &\quad +  \eta_2 \paren[\Big]{ R^h(z + h e_\alpha^\perp) + R^h(z+h (e_\alpha+e_\alpha^\perp)) }\\
    &\quad +  \eta_3 \paren[\Big]{ R^h(z - h e_\alpha^\perp) + R^h(z+h (e_\alpha-e_\alpha^\perp)) }.
    \end{align*}
We compute
\begin{align*}	
I &:= \eta_1\sum_{z \in \mathcal Z} \abs*{\scalar* {R^h(z) e_3 - R^h( z + h e_\alpha)e_3,  R^h(z)e_3 + R^h(z+h e_\alpha)e_3}} \\
&= \eta_1\sum_{z \in \mathcal Z} \brackets[\Big]{ |R^h(z) e_3|^2 - |R^h(z+h e_\alpha)e_3|^2 } = 0.
\end{align*}
With this result we easily obtain
\begin{align*}	
&II := \eta_2\sum_{z \in \mathcal Z} \abs*{\scalar* {R^h(z) e_3 - R^h( z + h e_\alpha)e_3,  R^h(z + h e_\alpha^\perp)e_3 + R^h(z+h (e_\alpha+e_\alpha^\perp))e_3}} \\
&   \leq \eta_2\sum_{z \in \mathcal Z} \abs*{\scalar* {R^h(z) e_3 - R^h( z + h e_\alpha)e_3,  R^h(z + h e_\alpha^\perp)e_3 - R^h(z )e_3   }} \\
&   + \eta_2\sum_{z \in \mathcal Z} \abs*{\scalar* {R^h(z) e_3 - R^h( z + h e_\alpha)e_3,     R^h(z+h (e_\alpha+e_\alpha^\perp))e_3 - R^h(z + he_\alpha)e_3 }} < \infty
\end{align*}
and analogously also that
\begin{align*}	
III := \eta_3\sum_{z \in \mathcal Z} \abs*{\scalar* {R^h(z) e_3 - R^h( z - h e_\alpha)e_3,  R^h(z + h e_\alpha^\perp)e_3 + R^h(z+h (e_\alpha-e_\alpha^\perp))e_3}} 
       \end{align*}
is uniformly bounded. Obviously
\begin{align*}	
\sum_{z \in \mathcal Z} \abs*{\scalar* {R^h(z) e_3 - R^h( z + h e_\alpha)e_3, \frac 1 {\abs{\Gamma^{\textrm{pos}}_z}}\int_{\Gamma^{\textrm{pos}}_z}\widetilde R^h e_3}} 
\leq I + II + III,
\end{align*}
and we conclude that (after possibly redefining $w_3^0$)
\begin{equation*}\begin{aligned}\label{eq:(3,3)component}
\frac 1 h\paren[\Big]{ \scalar{ R^h e_3,\widetilde R^h e_3 } - 1} \wtwoscale \frac 1 \gamma w^0_3 +  \frac { 1} { \gamma} \scalar {R e_3,  \phi^{\widetilde Re_3}}. 
  \end{aligned}\qedhere\end{equation*}
\end{proof}

\begin{lemma}\label{lem:cofortho}
Let $S'$ be a bounded Lipschitz domain and let $(\widetilde u^h)\subset W^{2,2}(S', \mr^3)$, $(\widetilde R^h) \subset W^{1,2}(S', \mr^{3\times3})$ and $(R^h) \subset L^\infty(S', \SO3)$ be such that 
for each $h > 0$ the map $R^h$ is piecewise constant on each cube $x' + h[0,1)^2$ with $x' \in h\mz^2$, and for each $\xi \in \mr^2$ with $\abs {\xi}_\infty \leq h$ we have
\begin{align*}	
h^2 \norm { \widetilde u^h}_{W^{2,2}(S')}^2+ \norm { \nabla' \widetilde u^h - (\widetilde R^h e_1, \widetilde R^h e_2)}_{L^2(S')}^2 +\norm{ R^h - \widetilde R^h}_{L^2(S')}^2\phantom{\hspace{2cm}}\\
 + h^2 \norm { R^h - \widetilde R^h}_{L^\infty(S')}^2  + h^2 \norm{ \nabla' \widetilde R^h}_{L^2(S')}^2 +\norm{ R^h( \cdot + \xi) - R^h}_{L^2(S^h)}^2 \leq C h^2,
    \end{align*}
for some $C > 0$ and for each sequence of subdomains $ S^h \subset S'$ which satisfy $\dist( S^h, \partial  S') \geq ch$ for some $c > 0$. 
Then there exists $M_0 \in L^2(S', \mr^{2\times2}_{\sym})$ and $\zeta \in L^2(S', \stochsobolev_{\sym}^{1,2}(\Omega, \mr^2))$ with
\[
\sym \frac{  (R^h e_1, R^h e_2)^T \nabla' \widetilde u^h - \id_{2\times2}} h \wtwoscale M_0 + \sym \nabla_\omega \zeta.
\]
\end{lemma}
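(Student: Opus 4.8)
The crucial point is that $h^{-1}(R^h e_1,R^h e_2)^{T}\nabla'\widetilde u^h$ and $h^{-1}\id_{2\times2}$ are separately unbounded, so they must never be split. Put $P^h:=(R^h e_1,R^h e_2)$; since the columns of $R^h\in\SO3$ are orthonormal, $\scalar{R^h e_\alpha,R^h e_\beta}=\delta_{\alpha\beta}$, whence $\big((P^h)^{T}\nabla'\widetilde u^h-\id_{2\times2}\big)_{\alpha\beta}=\scalar{R^h e_\alpha,\,\partial_\beta\widetilde u^h-R^h e_\beta}$, and $\partial_\beta\widetilde u^h-R^h e_\beta=(\partial_\beta\widetilde u^h-\widetilde R^h e_\beta)+(\widetilde R^h e_\beta-R^h e_\beta)$ is $O(h)$ in $L^2(S')$ by hypothesis while $|R^h e_\alpha|\le1$. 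Thus $\sym h^{-1}\big((P^h)^{T}\nabla'\widetilde u^h-\id_{2\times2}\big)$ is bounded in $L^2(S',\mr^{2\times2}_{\sym})$ and, by Proposition~\ref{lem:compactness}, two-scale converges along a subsequence, which I additionally arrange so that $R^h\to R$ and $\widetilde R^h\to R$ strongly in $L^2(S')$ (using the bound on $R^h(\cdot+\xi)-R^h$), $\widetilde R^h\weakly R$ in $W^{1,2}(S')$ and $\widetilde u^h\weakly u$ in $W^{2,2}(S')$, with $\partial_\alpha u=Re_\alpha$ and $R\in\SO3$ a.e. It then remains to identify the limit.

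Next I would split off a ``smooth'' and a ``piecewise constant'' part. Replacing $R^h e_\alpha$ by $\widetilde R^h e_\alpha$ in the inner product above costs $\scalar{(R^h-\widetilde R^h)e_\alpha,\,\partial_\beta\widetilde u^h-\widetilde R^h e_\beta}$, which is $O(h)$ in $L^1(S')$ after division by $h$ (a product of an $L^\infty$-bounded and an $O(h)$-in-$L^2$ sequence) and hence irrelevant for the two-scale limit; so up to such null sequences $\sym h^{-1}\big((P^h)^{T}\nabla'\widetilde u^h-\id_{2\times2}\big)=\sym(A^h+B^h)$ with $A^h_{\alpha\beta}:=h^{-1}\scalar{\widetilde R^h e_\alpha,\,\partial_\beta\widetilde u^h-\widetilde R^h e_\beta}$ and $B^h_{\alpha\beta}:=h^{-1}\scalar{R^h e_\alpha,\,\widetilde R^h e_\beta-R^h e_\beta}$. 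For $A^h$ I would apply Lemma~\ref{lem:product} componentwise to $f^h=\widetilde R^h e_\alpha\weakly\partial_\alpha u$ and $g^h_\beta:=\partial_\beta\widetilde u^h-\widetilde R^h e_\beta\weakly0$ in $W^{1,2}(S')$; the hypothesis $\limsup_h\eps^{-1}\|f^h g^h_\beta\|_{L^1}<\infty$ holds because $\scalar{\widetilde R^h e_\alpha,g^h_\beta}=O(h)$ in $L^2$ and $h/\eps\to\gamma$, and Lemma~\ref{lem:twoscalegradient} furnishes correctors $\phi^{\widetilde R e_\alpha},\phi^{g_\beta}\in L^2(S',\stochsobolev^{1,2}(\Omega,\mr^3))$ with $\nabla'(\widetilde R^h e_\alpha)\wtwoscale\nabla'\partial_\alpha u+\nabla_\omega\phi^{\widetilde R e_\alpha}$ and $\nabla'g^h_\beta\wtwoscale\nabla_\omega\phi^{g_\beta}$. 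Lemma~\ref{lem:product}, combined with Lemma~\ref{lem:divdense} (to pass from the test functions $\div_{\omega}v$ to the whole oscillatory part) and the closedness argument of Remark~\ref{borism2}, then gives that the oscillatory part of the two-scale limit of $A^h_{\alpha\beta}$ is $\tfrac1\gamma\scalar{\partial_\alpha u,\phi^{g_\beta}}$.

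The term $B^h_{\alpha\beta}=h^{-1}\big(\scalar{R^h e_\alpha,\widetilde R^h e_\beta}-\delta_{\alpha\beta}\big)$ is precisely the object treated in Lemma~\ref{lem:(33)}, with $e_\alpha,e_\beta$ in place of $e_3$ ($R^h$ being piecewise constant on the cubes $z+h[0,1)^2$, $z\in h\mz^2$, and $\widetilde R^h$ its mollification on scale $h$). Re-running that proof --- integrating by parts cube by cube, absorbing the $\eps^{-1}$-term, and controlling the boundary/jump contributions by the hypothesis on $h^{-1}(R^h(\cdot+\xi)-R^h)$ together with the explicit mollification form of $\widetilde R^h$ --- I expect the oscillatory part of the two-scale limit of $\sym B^h$ to be $\sym_{\alpha\beta}\tfrac1\gamma\scalar{\partial_\alpha u,\phi^{\widetilde R e_\beta}}$, with the \emph{same} corrector $\phi^{\widetilde R e_\beta}$. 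This is where symmetrization is indispensable, and it is the main obstacle: the exact identity $\scalar{R^h(z)e_\alpha,R^h(z)e_\beta}=\delta_{\alpha\beta}$, constant in $z$, eliminates the dangerous ``diagonal'' boundary term exactly as the identity $I=0$ does in Lemma~\ref{lem:(33)}, but for $\alpha\ne\beta$ only after symmetrizing does the remaining cross term read $\sym\big(R^h(z)^{T}R^h(z+he_\gamma)\big)_{\alpha\beta}-\delta_{\alpha\beta}=O\big(|R^h(z+he_\gamma)-R^h(z)|^2\big)$, which is summable by the jump bound; the non-symmetrized cross term is only $O\big(|R^h(z+he_\gamma)-R^h(z)|\big)$ and is \emph{not} summable. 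This is exactly why only the symmetric part can be identified, and it is the delicate part of the argument.

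Adding the two contributions yields a two-scale limit of the form $M_0(x)+\sym_{\alpha\beta}\tfrac1\gamma\scalar{\partial_\alpha u,\,\phi^{g_\beta}+\phi^{\widetilde R e_\beta}}$, where $M_0\in L^2(S',\mr^{2\times2}_{\sym})$ collects the macroscopic ($\omega$-independent) parts. Finally I would recognize the oscillatory part as a symmetrized stochastic gradient: Lemma~\ref{lem:twoscalesecondgradient} applied componentwise to $\widetilde u^h$ produces $\phi^{\widetilde u}\in L^2(S',\stochsobolev^{2,2}(\Omega,\mr^3))$ with $\nabla^2\widetilde u^h\wtwoscale\nabla^2 u+\nabla^2_\omega\phi^{\widetilde u}$; computing the two-scale limit of $\nabla'g^h_\beta=\nabla'\partial_\beta\widetilde u^h-\nabla'(\widetilde R^h e_\beta)$ in two ways and using $\partial_\beta u=Re_\beta$ gives $\nabla_\omega\phi^{g_\beta}=\nabla_\omega(D_\beta\phi^{\widetilde u}-\phi^{\widetilde R e_\beta})$, hence by ergodicity $\phi^{g_\beta}+\phi^{\widetilde R e_\beta}=D_\beta\phi^{\widetilde u}$ up to an $\omega$-independent function. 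Since $\partial_\alpha u$ does not depend on $\omega$,
\[
 \sym_{\alpha\beta}\tfrac1\gamma\scalar{\partial_\alpha u,\,D_\beta\phi^{\widetilde u}}=\sym_{\alpha\beta}\tfrac1\gamma D_\beta\scalar{\partial_\alpha u,\,\phi^{\widetilde u}}=\sym\nabla_\omega\zeta,\qquad\zeta:=\tfrac1\gamma\big(\scalar{\partial_1 u,\phi^{\widetilde u}},\,\scalar{\partial_2 u,\phi^{\widetilde u}}\big)^{T},
\]
the $\omega$-independent correction being absorbed into $M_0$; moreover $\zeta\in L^2(S',\stochsobolev^{1,2}_{\sym}(\Omega,\mr^2))$ because $\sym\nabla_\omega\zeta$ is, by construction, the oscillatory part of a two-scale limit that is bounded in $L^2$. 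This establishes $\sym h^{-1}\big((R^h e_1,R^h e_2)^{T}\nabla'\widetilde u^h-\id_{2\times2}\big)\wtwoscale M_0+\sym\nabla_\omega\zeta$.
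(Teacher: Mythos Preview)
Your route differs substantially from the paper's, and as written it has a genuine gap.

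The paper's argument first eliminates $R^h$ and $\widetilde R^h$ from the problem via the algebraic identity
\[
2\,\sym\!\big((P^h)^{T}\nabla'\widetilde u^h-\id_{2\times2}\big)
=\big((\nabla'\widetilde u^h)^{T}\nabla'\widetilde u^h-\id_{2\times2}\big)
-(\nabla'\widetilde u^h-P^h)^{T}(\nabla'\widetilde u^h-P^h),
\]
the last term being $O(h^2)$ in $L^1$ by the hypotheses. Thus it suffices to identify the two-scale limit of $h^{-1}\big((\nabla'\widetilde u^h)^{T}\nabla'\widetilde u^h-\id\big)$, a quantity involving \emph{only} $\widetilde u^h$. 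One then tests the oscillatory part against $\cof\nabla_\omega b$ and uses the null-Lagrangian structure of the metric tensor exactly as in \cite{Vel13}[Lemma~3.7], concluding via Theorem~\ref{thm:secorderdecomp} that the oscillatory part lies in $F^2_{\mathrm{ppot}}$, i.e.\ is a stochastic Hessian $\nabla^2_\omega\psi=\sym\nabla_\omega(\nabla_\omega\psi)$. No structural information on $R^h$ or $\widetilde R^h$ beyond the stated $L^2$ bounds is used.

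Your argument, by contrast, keeps $R^h,\widetilde R^h$ in play and handles $B^h_{\alpha\beta}=h^{-1}(\langle R^he_\alpha,\widetilde R^he_\beta\rangle-\delta_{\alpha\beta})$ by re-running the cube-by-cube boundary computation of Lemma~\ref{lem:(33)}. That computation, however, relies essentially on the hypothesis that $\widetilde R^h$ is the \emph{mollification of $R^h$ on scale $h$}: this is what allows one to write $\tfrac{1}{|\Gamma^{\mathrm{pos}}_z|}\int_{\Gamma^{\mathrm{pos}}_z}\widetilde R^h$ as an explicit convex combination of the neighbouring values $R^h(z+\cdot)$ and hence to exploit the cancellation $I=0$. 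This mollification hypothesis is present in Lemma~\ref{lem:(33)} but is \emph{not} among the hypotheses of Lemma~\ref{lem:cofortho}; without it, your control of the boundary sums does not follow from what is given. Your observation that $\sym(R^h(z)^{T}R^h(z+he_\gamma))-\id=O(|R^h(z+he_\gamma)-R^h(z)|^2)$ is correct and is indeed the right replacement for $I=0$ in the off-diagonal case, but you still need the mollification structure to reduce the boundary integrals to such pointwise expressions in the first place. If you add that hypothesis you do obtain the conclusion (with the bonus of an explicit formula $\zeta_\alpha=\gamma^{-1}\langle\partial_\alpha u,\phi^{\widetilde u}\rangle$), but you then prove a weaker statement than the lemma as formulated.
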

\begin{proof}
Using Theorem~\ref{thm:secorderdecomp} the proof is identical to \cite{Vel13}[Lemma~3.7].
\end{proof}

\subsubsection{Proof of Theorem~\ref{theoremlowbou}}
 \begin{proof}
Let $(u^h)$ be as in the claim, and let $S' \subset S$ be open with $C^{1,1}$ boundary, $(R^h)$, $(\widetilde R^h)$ and $\widetilde u^h$ be given as in Lemma~\ref{lemmacomp}.
Define $z^h$ by the decomposition of $u^h$ into 
\[
  u^h(x', x_3) = \overline u^h(x') + h x_3 \widetilde R^h(x')e_3 + hz^h(x',x_3),
\]
where once more $\overline u^h(x') = \int_I u^h(x',x_3) \td x_3$. Clearly we have $z^h \in W^{1,2}(S' \times I, \mr^3)$ with $\int_I z(x_3) \td x_3 = 0$. 

We define the approximate strain  
\[
  G^h := \frac { (R^h)^T \nabla_h u^h - \id_{3\times3}} h
\]
and split it into
\begin{equation}\begin{aligned}
\label{eq:Ghsplit}  
G^h &= \frac{ \iota \paren[\big]{(R^h e_1, R^h e_2)^T \nabla' \widetilde u^h - \id_{2\times2}}} h + \frac 1 h \sum_{\alpha = 1,2} (R^h e_3 \cdot \partial_\alpha \widetilde u^h) e_3 \otimes e_\alpha \\
       & \quad + \frac 1 h (R^h)^T ( \nabla' \overline u^h - \nabla' \widetilde u^h, 0) + \frac 1 h \paren[\Big]{ (R^h)^T \widetilde R^h e_3 \otimes e_3 - e_3 \otimes e_3 } \\
       & \quad + x_3 (R^h)^T (\nabla' \widetilde R^h e_3,0 ) + (R^h)^T \nabla_h z^h.
       \end{aligned}\end{equation}
Since $G^h$ is uniformly bounded in $L^2$, we may take a subsequence such that $G^h \wtwoscale G$ for some $G \in L^2(\Omega \times S \times I, \mr^{3\times 3})$. We study $\sym G$ by computing the possible two-scale limits of the terms in $\sym G^h$. For this we will readily take further subsequences if needed, without denoting them explicitly.

By applying Lemma~\ref{lem:cofortho} we obtain
\begin{equation*}\begin{aligned}
\sym \frac{ \iota \paren[\big]{(R^h e_1, R^h e_2)^T \nabla' \widetilde u^h - \id_{2\times2}}} h \wtwoscale  \iota(M_0 + \sym \nabla_\omega \zeta)
\end{aligned}\end{equation*}
for some $M_0 \in L^2(S', \mr^{2\times 2}_{\sym})$ and $\zeta \in L^2(S', \stochsobolev^{1,2}_{\sym}(\Omega, \mr^2))$. From Lemma~\ref{lem:twoscalegradient} and Prop.~\ref{propositionreasonable} we get

\[
  x_3 (R^h)^T ( \nabla' \widetilde R^h e_3) \wtwoscale x_3( \secf^u, 0)^T + x_3 R^T \nabla_\omega \phi^{\widetilde Re_3},
\]
as well as 
\[
\frac 1 h (R^h)^T( \nabla' \overline u^h - \nabla' \widetilde u^h) \wtwoscale R^T \paren[\Big]{\theta + \nabla_\omega v
}\]
for some $\theta \in L^2(S', \mr^{3 \times 2})$ and $v, \phi^{\widetilde R e_3} \in L^2(S', \stochsobolev^{1,2}(\Omega,\mr^3))$.

For 
\begin{equation*}\begin{aligned}
 \frac 1 h \paren[\Big]{ (R^h)^T \widetilde R^h e_3 \otimes e_3 - e_3\otimes e_3 }   + \frac 1 h \sum_{\alpha = 1,2} (R^h e_3 \cdot \partial_\alpha \widetilde u^h) e_3 \otimes e_\alpha\\
=
\frac 1 h \begin{pmatrix}
0                                       & 0                                       & \scalar{ R^h e_1, \widetilde R^h e_3 } \\
0                                       & 0                                       & \scalar{ R^h e_2, \widetilde R^h e_3 } \\
R^h e_3 \cdot \partial_1 \widetilde u^h & R^h e_3 \cdot \partial_2 \widetilde u^h & \scalar{ R^h e_3, \widetilde R^h e_3 } -1
\end{pmatrix}
\end{aligned}\end{equation*}
we obtain from Lemma~\ref{lem:(31)} and Lemma~\ref{lem:(33)} that
  \begin{equation*}\begin{aligned}
  &
    \sym \brackets*{\frac 1 h \paren[\Big]{ (R^h)^T \widetilde R^h e_3 \otimes e_3 - e_3 \otimes e_3 }   + \frac 1 h \sum_{\alpha = 1,2} (R^h e_3 \cdot \partial_\alpha \widetilde u^h) e_3 \otimes e_\alpha}
  \\
    & \wtwoscale
    \frac 1 \gamma \sym ( w^0 \otimes e_3) 
    +\frac { 1} { \gamma}\sym ( R^T \phi^{\widetilde R e_3} e_3 \otimes e_3) + \frac 1 \gamma \sym( R^T \nabla_\omega\phi^{\widetilde u}   e_3 \otimes e_3)
    \end{aligned}\end{equation*}
for some $w^0 \in L^2(S', \mr^3)$, $\phi^{\widetilde u} \in L^2(S', \stochsobolev^{2,2}(\Omega,\mr^3))$.
For the last term $(\nabla_h z^h)$ notice that \eqref{eq:Ghsplit} yields an uniform $L^2$ bound. By Lemma~\ref{tvrdnja2} we thus get
\[
  (R^h)^T \nabla_h z^h \wtwoscale  R^T \paren*{ \nabla_\omega \phi^z, \frac 1 \gamma D_{x_3}\phi^z }
\]
for some $\phi^z \in L^2(S', \stochsobolev^{1,2}(\Omega \times I,\mr^3))$.
We conclude that
\begin{align*}	
  \sym G^h \wtwoscale& \iota(M_0 + \sym \nabla_\omega  \zeta )
                + \frac 1 \gamma\sym ( w^0 \otimes e_3)
                +\frac { 1} { \gamma}\sym ( R^T \phi^{\widetilde R e_3} e_3 \otimes e_3) \\
                &+ \frac 1 \gamma \sym( R^T \nabla_\omega\phi^{\widetilde u}   e_3 \otimes e_3) 
                +\sym\paren[\Big]{R^T\theta + R^T\nabla_\omega v}\\
                &+x_3\sym\paren*{{{\iota(\secf^u)}}+ \paren[\big]{R^T (\nabla_\omega \phi^{\widetilde R e_3}), 0}}
                + \sym\paren[\Big]{R^T \paren[\big]{\nabla_\omega \phi^z, \frac 1 \gamma D_{x_3} \phi^z}}.
\end{align*}
We rewrite this as

\[
\sym G = \iota \paren[\Big]{\sym ( \widetilde B+ x_3 \secf^u)}  + \sym \paren*{ \nabla_\omega \phi, \frac 1 \gamma D_{x_3} \phi}, 
\]
where $\widetilde B = M_0 + [R^T \theta_{ij}]_{1 \leq i,j \leq 2}$ as well as 
\begin{align*}	
  \phi(x,\omega) := R^T(x') \widetilde \phi(x,\omega)+  \zeta(x',\omega) + \gamma x_3\begin{pmatrix} b_1 (x') & b_2(x') & 0 \end{pmatrix}^T,
\end{align*}
with
\begin{align*}	
\widetilde \phi(x,\omega) &=   \phi^z(x,\omega) +  v(x', \omega)  + x_3 \phi^{\widetilde R e_3}(x',\omega) + x_3 w_0(x') + \frac 1 \gamma \phi^{\widetilde u}(x',\omega), \\
b_i & =[R^T\theta(x')]_{3i} \quad \text{ for } i = 1,2.
       \end{align*}
Notice that $\phi \in \stochsobolev^{1,2}_{\sym,\gamma} (\Omega \times I, \mr^3)$.

After exhausting $S$ by $S' \subset S$ open with $C^{1,1}$ boundary, using Lemma~\ref{lemmacomp} and Remark~\ref{borism4} as well as the quadraticity of the form $\mathcal Q^{\gamma}$, the lower bound  follows by  using c from Assumption \ref{assumption} and   lower semicontinuity of the quadratic form $Q^0$ with respect to the stochastic two-scale convergence, see Lemma~\ref{lowersemicontinuity} and Remark \ref{borism6}  (see also \cite{Horneuvel12} for the details in the periodic case). 
  \end{proof}

\subsection{Proof of upper bound} 
In this section we prove the upper bound statement. We recall some issues from the periodic homogenization (see \cite{Horneuvel12}).  As in \cite{Schmidt-07} and other related results,
the key ingredient here is the density result for $W^{2,2}(S)$ isometric immersions
established in \cite{Hornung-arma2, Hornung-arma1} (cf.\ also \cite{Pakzad} for an earlier result in this direction).
It is the need for the results in \cite{Hornung-arma2} that forces us to restrict ourselves to domains $S$
which are not only Lipschitz but also piecewise $C^1$. More precisely, we only need that the outer unit normal
be continuous away from a subset of $\partial S$ with length zero.

For a given $\de\in W^{2,2}_{\textrm{iso}}(S)$ and for
a displacement $V\in W^{2,2}(S, \R^3)$ we denote by $q_V^\de$ the quadratic form
\[
q_V^\de = \sym \left((\nabla \de)^T(\nabla V)\right).
\]
We denote by $\mathcal A(S)$ the set of all
$\de\in W^{2,2}_{\textrm{iso}}(S, \R^3)\cap C^{\infty}(\overline{S}, \R^3)$
with the property that
\begin{align*}
\mathcal{S}:=\Big\{ B\in C^{\infty}(\overline{S}, \mathbf{R}^{2\times 2}_{\sym}) &: B = 0 \mbox{ in a neighborhood of }\{x'\in S : \secf^\de(x') = 0 \}\Big\}
\\
&\subset \{q_V^\de : V\in C^{\infty}(\overline{S}; \mathbf{R}^3)\}.
\end{align*}
In other words, if $\de\in\mathcal A(S)$ and $B\in C^{\infty}(\overline{S}, \mathbf{R}^{2\times 2}_{\sym})$ is a matrix field
which vanishes in a neighborhood of $\{\secf^\de = 0\}$, then there exists a displacement $V\in C^{\infty}(\o{S}; \mathbf{R}^3)$ such
that $q_V^\de = B$.
The necessary lemma for the proof of upper bound is the following lemma, whose proof is given in \cite{Schmidt-07,Horneuvel12}.
\begin{lemma}
	\label{le71}
	The set $\mathcal A(S)$ is dense in $W^{2,2}_{\textrm{iso}}(S)$ with respect to the strong $W^{2,2}(S,\mr^2)$ topology.
\end{lemma}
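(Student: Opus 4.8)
The plan is to reduce the statement, via the already-available density of smooth isometric immersions, to a realization property for the quadratic forms $q_V^\de$ on smooth developable surfaces, and then to invoke the constructions of \cite{Schmidt-07, Horneuvel12}, which carry over unchanged since the stochastic setting affects neither the geometry of $S$ nor that of $W^{2,2}$ isometric immersions.

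First I would use the approximation theorem of \cite{Hornung-arma1, Hornung-arma2}: because $S$ is Lipschitz and its outer unit normal is continuous away from a subset of $\partial S$ of length zero, every $\de \in W^{2,2}_{\textrm{iso}}(S)$ is a strong $W^{2,2}(S,\mr^3)$-limit of maps $\de_k \in W^{2,2}_{\textrm{iso}}(S) \cap C^\infty(\overline S,\mr^3)$. It then suffices to approximate each such smooth isometric immersion, in $W^{2,2}$, by an element of $\mathcal{A}(S)$; equivalently, to show that the smooth isometric immersions $\de$ enjoying the surjectivity of $V \mapsto q_V^\de$ onto the admissible matrix fields $\mathcal S$ are dense among all smooth isometric immersions. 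Here a further small $W^{2,2}$-perturbation of $\de_k$ is used to arrange that the curved region $\{\secf^{\de_k}\neq 0\}$ is a finite union of elementary developable pieces, on which the system below can be solved explicitly.

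Then, for a smooth isometric immersion $\de$, I would exploit the structure theorem for isometric immersions: on the open set $\{\secf^\de \neq 0\}$ the image is developable, so through each point there passes a maximal straight segment (a ruling) running to $\partial S$, along which the Gauss map $n^\de$ is constant. Given $B \in C^\infty(\overline S, \mr^{2\times 2}_{\sym})$ that vanishes near $\{\secf^\de = 0\}$, I would construct $V$ with $q_V^\de = \sym\big((\nabla\de)^T\nabla V\big) = B$ by writing $\nabla V$ in the moving frame $(\partial_1\de,\partial_2\de, n^\de)$; the normal component of $V$ and the skew part of $(\nabla\de)^T\nabla V$ are free, so the constraint becomes a linear first-order system for the tangential components of $V$, which along each ruling reduces to a linear ODE whose coefficients are controlled by the nonvanishing second fundamental form. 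Since $B$ is supported away from the flat points, these ODEs are solvable, and their solutions can be assembled into a single $V \in C^\infty(\overline S, \mr^3)$.

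The main obstacle is precisely this assembling: obtaining, for \emph{every} admissible $B$, a globally smooth $V$ on all of $\overline S$ while controlling the combinatorics of the rulings and their accumulation at $\partial\{\secf^\de = 0\}$ and at $\partial S$. This is exactly the content of \cite{Schmidt-07} and its adaptation in \cite{Horneuvel12}, which I would cite for the details; the input from \cite{Hornung-arma1, Hornung-arma2} enters only as the black box behind the first step.
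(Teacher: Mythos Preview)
Your proposal is correct and follows exactly the route the paper takes: the paper gives no argument of its own and simply refers to \cite{Schmidt-07, Horneuvel12} for the proof, and your sketch faithfully outlines the strategy carried out there (approximation by smooth isometries via \cite{Hornung-arma1, Hornung-arma2}, developable structure on $\{\secf^\de\neq 0\}$, and the ODE construction of $V$). You are also right that the stochastic framework is irrelevant here, since the lemma is purely about the geometry of $W^{2,2}$ isometric immersions of $S$.
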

	\begin{proof}[Proof of Theorem \ref{theoremupperbound}]
	Fix some typical $\omega \in \Omega$. By Lemma~\ref{le71} it suffices to show the claim for $\de \in \mathcal{A}(S)$. Fix $B \in \mathcal{S}$  and  $V \in C^{\infty}(\overline{S}, \mathbf{R}^3)$ such that $q_{V}^{y}=B$,
and define the unit normal $n^u=\partial_1 \de \wedge \partial_2 \de$.
	Next we divide the domain $S$ into small cubes $(D_i^{\eta})_{i=1}^n$, $D_i^{\eta} \subset S$  of size $\eta$ such that $\abs*{ S \backslash \cup_{i=1}^n D_i^{\eta}} \to 0$ as $\eta \to 0$. On each cube we define $A_i^{\eta}, B_i^{\eta} \in \mathbf{R}^{2 \times 2}_{\sym}$ as the averages
\[
 A_i^{\eta}= \frac{1}{\abs{D_i^{\eta}}} \int_{D_i^{\eta}} \secf^{\de}(x')\td x', \quad  B_i^{\eta}= \frac{1}{\abs{D_i^{\eta}}} \int_{D_i^{\eta}} B(x')\td x'.
\]
For each $i=1,\dots,n$ and $\delta < \tfrac{\eta}{2}$ we define
\[
   D_i^{\eta,\delta}= \set{ x' \in D_i^{\eta}: \dist(x',\partial D_i^{\eta}) > \delta }. 
\]

For each $i  = 1, \ldots, n$  let $(g_{i}^{\eta,k}) \subset \left(\mathcal{C}^{\infty}(\Omega)\otimes C^{\infty} (I)\right)^3$ be a minimizing sequence of $\mathcal Q^\gamma$, in the sense that
\begin{align*}	
  &\int_{\Omega \times I} Q\paren[\big]{\omega,\iota(B^{\eta}_i +x_3 A_i^{\eta} )+\sym(D_1 g_{i}^{\eta,k}, D_2 g_{i}^{\eta,k}, \tfrac{1}{\gamma} D_{x_3} g_{i}^{\eta,k})} \td x_3 \drandommeasure - \frac 1 k \\
  &\leq \inf_{\phi \in W^{1,2}(\Omega \times I,\mr^3)} \int_{\Omega \times I} Q\paren[\big]{\omega,\iota(B^{\eta}_i +x_3 A_i^{\eta} )+\sym(D_1 \phi, D_2 \phi, \tfrac{1}{\gamma} D_{x_3} \phi)} \td x_3 \drandommeasure.
\end{align*}

We start with the Kirchhoff-Love ansatz, augmented by its linearization induced by the displacement $V$:
\begin{align*}
	v^h(x',x_3):=\de( x')+hx_3n^{u}( x') + h\paren[\big]{V(x') + h x_3\mu(x')},
\end{align*}
where $\mu$ is given by
\[
\mu = (I - n^{u}\otimes n^{u})(\d_1 V\wedge\d_2 \de + \d_1 \de \wedge\d_2 V).
\]
We set $R(x') = \paren[\big]{\nabla'\de(x'),\ n^{u}(x')}$. A straightforward computation
shows that
\begin{equation}
\label{vhers}
\nabla_hv^h = R + h\Big( (\nabla' V, \mu) + x_3(\nabla' n, 0) \Big) + h^2x_3(\nabla'\mu, 0).
\end{equation}

The actual recovery sequence $\de^h$ is obtained by adding to $v^h$ the oscillating correction of order $\eps = \eps(h)$:
\begin{equation}\label{eq:15}
\de^{\eta,k,\delta,h}(x',x_3) := v^h(x',x_3) + h\varepsilon \sum_{i=1}^n \chi_i^{\eta,\delta}(x') R(x')g_i^{\eta,k}(T_{\varepsilon^{-1} x'}\omega,x_3). 
\end{equation}
Here $\chi_{i}^{\eta,\delta} \in C^1(S)$ are smooth cut-off functions that satisfy
  \[
    |\chi_{i}^{\eta,\delta}| \leq 1; \quad \chi_{i}^{\eta,\delta} = 0 \text{ on } (D_i^{\eta} \backslash D_i^{\eta,\delta})^c \; \text{ and } \; |\nabla \chi_{i}^{\eta,\delta}| \leq \frac{C}{\delta} \text{ for some } C>0.
      \] 
Equations \eqref{vhers} and \eqref{eq:15} imply together with $n^u \cdot \mu = 0$ that
\begin{equation}
\label{fin-1}
\begin{split}
R^T\nabla_h \de^{\eta,k,\delta,h} = \id_{3\times3} &+ h\iota\left((\nabla' \de)^T(\nabla' V) + x_3\secf^\de\right)
+ h ( e_3 \otimes (\mu\cdot\nabla' \de),0)^T \\
&+ h (e_3\otimes (n\cdot\nabla' V),0)
\\
&+ h \sum_{i=1}^n \chi_i^{\eta,\delta} (D_1 g_i^{\eta,k},D_2 g_i^{\eta,k}, \tfrac{\eps}{h} D_{x_3} g_i^{\eta,k}) 
\\
&+ h^2x_3R^T(\nabla'\mu, 0) + h\varepsilon\sum_{i=1}^n \left(\chi_i^{\eta,\delta}(R^T\nabla' R)g_i^{\eta,k},0 \right)\\  &+ h\varepsilon\sum_{i=1}^n\left( \nabla'\chi_i^{\eta,\delta} g_i^{\eta,k},0 \right);
\end{split}
\end{equation}
the arguments of $g_i^{\eta,k}$ and of $(D_1 g_i^{\eta,k},D_2 g_i^{\eta,k},\frac \eps h  D_{x_3} g_i^{\eta,k})$ are $(T_{\varepsilon^{-1} x'}\omega,x_3)$.
  From \eqref{eq:15} and \eqref{fin-1} we conclude that
    \[
    \norm{ \de^{\eta,k,\delta,h}-\de     }_{W^{1,2}(S \times I, \mr^3)} \xrightarrow{h\to 0}0.   
      \]
Defining
\[
G^{\eta,k,\delta,h} = \frac{1}{h}\Big( R^T\nabla_h \de^{\eta,k,\delta,h} -\id_{3 \times 3}\Big)
\]
and using the fact that
$
n\cdot\nabla V + \mu\cdot\nabla' \de = 0,
$
we deduce from \eqref{fin-1} that
\begin{align*}
\sym G^{\eta,k,\delta,h} &=
\iota\left(q_V^\de + x_3\secf^\de\right)
\\
&+\sum_{i=1}^n \chi_i^{\eta,\delta}\sym (D_1 g_i^{\eta,k},D_2 g_i^{\eta,k}, \tfrac \eps h D_{x_3} g_i^{\eta,k})  + h\sym \left(x_3R^T(\nabla'\mu, 0) \right)\\&+ \varepsilon\sum_{i=1}^n \sym\left(\chi_i^{\eta,\delta}(R^T\nabla' R)g_i^{\eta,k},0 \right)\\  &+ \varepsilon\sum_{i=1}^n\sym\left( \nabla'\chi_i^{\eta,\delta} g_i^{\eta,k},0 \right);
\end{align*}
Using the objectivity property we obtain
\[
   W(\omega,x'/\varepsilon,\nabla_h \de^{\eta,k,\delta,h}) =W(\omega,x'/\varepsilon,I_{3\times3}+hG^{\eta,k,\delta,h}).
\]
It is also easy to see from \eqref{Taylorexpansion} that
\begin{equation*}
  \abs*{\frac{1}{h^2}W(\omega,x'/\varepsilon,\nabla_h \de^{\eta,k,\delta,h})-Q(\omega,x'/\varepsilon,G^{\eta,k,\delta,h})}\to 0,
\end{equation*}
uniformly in $x'$ for $h\to0$. It is not difficult to conclude  
\begin{align*}	
&\lim_{\eta \to 0}\lim_{k \to \infty}\lim_{\delta \to 0}\lim_{h \to 0} \frac{1}{h^2} \int_{S \times I}Q(\omega,x'/\varepsilon,G^{\eta,k,\delta,h})\td x' \td x_3 \\
& = \int_{S}  \inf_{\phi} \int_{\Omega \times I} Q\paren[\big]{\omega,\iota(B +x_3 \secf^{\de} )+\sym(D_1 \phi, D_2 \phi, \tfrac{1}{\gamma} D_{x_3} \phi)} \td x_3 \drandommeasure
\td x',
\end{align*}
where we minimize over $\phi \in \mathcal{W}_{\sym, \gamma}^{1,2}(\Omega \times I,\mr^3)$. 
By choosing appropriate $B$ we get
\[
   \lim_{\eta \to 0}\lim_{k \to \infty}\lim_{\delta \to 0}\lim_{h \to 0} \frac{1}{h^2} \int_{S \times I}Q(\omega,x'/\varepsilon,G^{\eta,k,\delta,h})\td x' \td x_3= \int_{S} \mathcal{Q}^{\gamma} (\secf^\de(x')) \td x'.
\]
The claim now follows by the lemma of Attouch and by a classical diagonal argument for $\Gamma$-convergence. 
\end{proof}  
\section{Examples for the probability space} 
The first example is the standard one and is already given in \cite{papanicolaou}. It covers the case of periodic homogenization. 
\begin{example}
	We take $W:\mathbf{R}^2 \times \mathbf{R}^{3 \times 3} \to [0,\infty]$ that is $1$-periodic in $x'$ and that satisfies the property $a,b,c$ from Assumption \ref{assumption}. Next we take $\Omega=T^2$ the $2$-dimensional unit torus with the Lebesgue measurable sets as the $\sigma$-algebra and the probability $P$ as Lebesgue measure on $T^2$. The measure is invariant under translation, e.g.,\ from the dynamical system $T_x \omega= \omega+x$ (mod $1$).
	The infinitesimal generators are the usual partial derivatives, for $i=1,2$
\[
   D_i= \frac{\partial}{\partial\omega_i} \ (\omega=(\omega_1,\omega_2)).
\]
In the end we define  $W(\omega,x',F)=W(\omega+x',F)$.  In this case we obtain for $\mathcal{Q}^{\gamma}$ the following formula
\begin{equation*} 
\mathcal{Q}^{\gamma} (G)= \inf_{\phi, B } \int_{T^2 \times I} Q\paren[\big]{x',\iota(B+x_3 G)+\sym(D_1 \phi, D_2 \phi, \tfrac{1}{\gamma} D_{x_3} \phi)} \td x' \td x_3,
\end{equation*}
where the infimum is taken over $\phi \in W^{1,2} (T^2 \times I, \mathbf{R}^3)$, $B \in \mathbf{R}^{2 \times 2}$.

\end{example}

In \cite{papanicolaou} it is also shown how almost periodic case can be covered with the abstract approach of stochastic homogenization.
Periodic (or almost periodic) homogenization naturally destroys the isotropic character of the energy density, even if the constituents are isotropic.  
In the next example we want to show how we can obtain the isotropic energy density out of isotropic constituents by stochastic homogenization. 
\begin{example}  \label{borism3} 
We would like to construct the probability space that consists of some subset of functions on $\mathbf{R}^2$ (taking values in some finite or countable  set) that is invariant under rotations of the coordinate system and moreover that the probability measure is invariant under these rotations. 
One possible construction can be made using Poisson processes.  
We  construct the probability space that consists of piecewise continuous functions. Namely, the construction of the probability space goes as follows: we take the Poisson point  process in $\mathbf{R}^2$ with the Lebesgue measure  and the sequence of independent random variables $(J_n)$ (independent of the Poisson process), taking values in the set $\mathbf{N}$.   
We then construct the marked Poisson point process, i.e., to every point (of some realization) we give a mark according to the realization of the sequence $(J_n)$.
For $i \in \mathbf{N}$ we take energy density functions $W^i:\mathbf{R}^2 \times \mathbf{R}^{3 \times 3} \to [0,\infty]$ that satisfies the properties $a,b,c$ from the Assumption \ref{assumption} and that is isotropic, i.e., it satisfies
\begin{align*}	
W^{i}(\omega,x',FR)= W^{i}(\omega,x',F), \textrm{ for all } F \in \mathbf{R}^{3 \times 3}, R \in \SO 3,\\
 \textrm{ for a.e. } (\omega,x') \in \Omega \times S \text{ and all } i \in \mathbf N.
\end{align*}
This implies that the same property is valid for the appropriate quadratic forms $Q^i$. 
 Out of each realization of the marked Poisson point process we make the material mixture in the following way: the point $x' \in \mathbf{R}^2$ is occupied by the material $i$ if the point $x'$ is in the Voronoi cell of the point that is marked with the number $i$. In that way we obtain the probability space where
\begin{eqnarray*}
\Omega=\{\textrm{piecewise constant functions that take values in the set $\mathbf{N}$ and \phantom{\hspace{1cm}}} \\ \textrm{that is constructed using marked Poisson process}   \}.
\end{eqnarray*}
For the $\sigma$-algebra we take the one generated by the sets 
\[
  \set[\big] { f \in \Omega  \ | \ f(x_l)=i_l, x_l \in \mathbf{Q}^2, i_l \in \mathbf{N}, \textrm{ for } l=1,\dots, n  }.
\]
 For the probability measure we take the pushforward of the measure given by the marked Poisson processes. The action $T_{x'}$ is simply given by the translation
$T_{x'} \omega (y')=\omega (x'+y')     $. It is easily seen that this action is measure preserving (since the distribution of the marked Poisson process is translation invariant) and ergodic. The ergodicity follows from the fact that the marked Poisson process (with independent marks) is ergodic and from the fact that the probability measure is just a pushforward. The energy density we define in the following way:
\[
  W(\omega,x',F)=W^i(\omega, x',F), \ \textrm{ if } \omega(x')=i. 
\]
For $R \in \SO 2$ we define the rotational transformation $R_t:\Omega \to \Omega$ in the following way
\[
  R_t(\omega)(x')=\omega(Rx'), \textrm{ for all } x' \in \mathbf{R}^2.
\]
Notice that $R_t$ is measure preserving. By $\widetilde R$ we denote the matrix in $\SO 3$ given by $\widetilde R=\iota(R)+e_3 \otimes e_3$. From the properties of the infinitesimal generator 
we easily conclude for $f \in \stochsmooth(\Omega)$ that
\[
  (D_1 (f\circ R_t), D_2(f \circ R_t)) =(D_1 f \circ R_t, D_2 f\circ R_t) R.
\]
From the cell formula \eqref{defQ} and the isotropy property of $Q^0$ we conclude
\begin{align*} 
\mathcal{Q}^{\gamma} (GR)&= \inf_{\phi, B } \int_{\Omega \times I} Q^0\paren[\big]{\omega,\iota(B+x_3 GR)+(D_1 \phi, D_2 \phi, \tfrac{1}{\gamma} D_{x_3} \phi)}  \drandommeasure \td x_3\\
&= \inf_{\phi, B } \int_{\Omega \times I} Q^0\paren[\big]{\omega,\iota(BR^T+x_3 G)+(D_1 \phi, D_2 \phi, \tfrac{1}{\gamma} D_{x_3} \phi)\widetilde R^T}  \drandommeasure \td x_3\\
&= \inf_{\phi, B } \int_{\Omega \times I} Q^0\paren[\big]{\omega,\iota(B+x_3 G)+(D_1 \phi, D_2 \phi, \tfrac{1}{\gamma} D_{x_3} \phi) }  \drandommeasure \td x_3 \\ 
&= \mathcal{Q}^{\gamma} (G).
\end{align*}
The infima are taken for $\phi \in W^{1,2}(\Omega \times I, \mr^3)$, $B \in \mathbf{R}^{2 \times 2}$.  This proves the isotropy. 
\end{example}
\begin{remark}
	Notice that in the first example we can write the cell formula in the following way
	\begin{equation*}  
	\mathcal{Q}^{\gamma} (G)= \inf_{\phi, B } \int_{\gamma T^2 \times I} Q^0\paren[\big]{x',\iota(B+x_3 G)+\sym(D_1 \phi, D_2 \phi,  D_{x_3} \phi)} \td x' \td x_3,
	\end{equation*}
	where the infimum is taken over $\phi \in W^{1,2} (\gamma T^2 \times I, \mathbf{R}^3)$, $B \in \mathbf{R}^{2 \times 2}$. In the second example  we can write  
	the cell formula in the following way 
    \[
         \mathcal{Q}^{\gamma} (G)= \inf_{\phi, B } \int_{ \Omega^d \times I} Q^0\paren[\big]{\omega,\iota(B+x_3 G)+(D_1 \phi, D_2 \phi,  D_{x_3} \phi)} \drandommeasure\td x_3,
    \]
	where the infimum is taken for $\phi \in W^{1,2}(\Omega^d \times I,\mathbf{R}^3)$, $B \in \mathbf{R}^{2 \times 2}$. Here, $\Omega^d$ is the transformed set of functions, where the transformation is given by
	\[
            \omega^d(x')=\omega(\gamma^{-1} x'), \textrm{ for every } \omega \in \Omega.           
        \]
	This transformation changes the intensity of the Poisson in the background. This explains the meaning of the parameter $\gamma$. Namely, although it seems that $\varepsilon(h)$ has not clear physical meaning, its meaning is incorporated in the probability space in the background, in the first case it is the size of the cell of the periodicity, while in the second it is connected to the intensity of the Poisson process in the background.  
\end{remark}

\def\cprime{$'$}

\appendix

\section{Appendix}
\subsection{Decompositions of $L^2$}
The decomposition of $L^2$ into a `gradient' part and a `divergence-free' part, known as Helmholtz-decomposition, is a classical result in real space, and has since been generalized in various aspects.

The aim in this section is to present possible decompositions of $L^2$, once a probability space is involved. 
From now on let $(\Omega, \mathcal{F}, P)$ be a probability space and an $N$-dimensional ergodic system $(T_x): \Omega \to \Omega$ (see Definitions \ref{defgroup} and \ref{defergodic}) as well as $D_i$, $i=1, \ldots N$ the infinitesimal generators of $T$ (see the discussion after Remark~\ref{rem:ergo-weak}.) We first recall a known result for $L^2(\Omega, \mr^N)$, we state a similar decomposition of $L^2(\Omega, \mr^{2 \times 2}_{\sym})$ into second gradients parts and a remainder, and finally we derive the decomposition for the `mixed' space $L^2( \Omega \times S, \mr^{N+M})$, where $S \subset \mr^M$.

\subsection{Decomposition of purely random spaces (first order)}
The Helmholtz-decomposition for stochastic $L^2$-spaces were already known. We recall the results given in \cite{gloria1}[Prop.~A.8, Prop.~A.9]:

Let 
\begin{align*}	
  L^2_{\mathrm{pot}}(\Omega) &:= \set { f \in L^2(\Omega, \mr^N): \int_\Omega (f_i D_j g - f_j D_i g) = 0,\\ 
                             &\hspace{4.5cm} \text{ for all } g \in W^{1,2}(\Omega), i,j = 1, \ldots, N}, \\
  L^2_{\mathrm{sol}}(\Omega) &:= \set { f \in L^2(\Omega, \mr^N): \int_\Omega f \cdot (\nabla_\omega  g) = 0, \quad\text{ for all } g \in W^{1,2}(\Omega)},
\end{align*}
as well as
\[
  F^2_{\mathrm{pot}}(\Omega) := \set*{ f \in L^2_{\mathrm{pot}}(\Omega) : \int_\Omega f = 0 }, \quad
  F^2_{\mathrm{sol}}(\Omega) := \set*{ f \in L^2_{\mathrm{sol}}(\Omega) : \int_\Omega f = 0 }.
\]
\begin{theorem}\label{thm:firstordercomp}
Let $(\Omega, \mathcal{F}, P)$ be a probability space and an $N$-dimensional ergodic system $(T_x): \Omega \to \Omega$. Then
\[
  L^2(\Omega, \mr^N) = F^2_{\mathrm{pot}}(\Omega)\oplus F^2_{\mathrm{sol}}(\Omega) \oplus \mr^N.
\]
Furthermore we can characterize the spaces by
\begin{align*}	
  F^2_{\mathrm{pot}}(\Omega) &= \adh_{L^2(\Omega,\mr^N)} \set { \nabla_\omega g: \quad g \in W^{1,2}(\Omega)},
\end{align*}
and for $N = 2,3$ we get respectively
\begin{align*}
  F^2_{\mathrm{sol}}(\Omega) &= \adh_{L^2(\Omega,\mr^{2})} \set { (-D_2, D_1) g: \quad g \in W^{1,2}(\Omega, \mr)},\\
  F^2_{\mathrm{sol}}(\Omega) &= \adh_{L^2(\Omega,\mr^{3})} \set {\nabla_\omega \times g: \quad g \in W^{1,2}(\Omega, \mr^3)}.
\end{align*}
\end{theorem}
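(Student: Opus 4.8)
The plan is to reduce the whole statement, following \cite{gloria1}, to the single nontrivial fact
\[
  F^2_{\mathrm{pot}}(\Omega)\cap F^2_{\mathrm{sol}}(\Omega)=\set{0},
\]
after which everything is Hilbert-space bookkeeping. Write $L^2_0:=\set{f\in L^2(\Omega,\mr^N):\int_\Omega f=0}$ and $V:=\adh_{L^2(\Omega,\mr^N)}\set{\nabla_\omega g:g\in W^{1,2}(\Omega)}$. First I would note $\int_\Omega\nabla_\omega g=0$ (put $h\equiv1$ in the identity $\int_\Omega D_iu\,v=-\int_\Omega u\,D_iv$), so $V\subseteq L^2_0$; and, by the same identity together with the commutativity of $D_1,\dots,D_N$, $\int_\Omega(D_ig\,D_jh-D_jg\,D_ih)=-\int_\Omega g\,(D_iD_jh-D_jD_ih)=0$, so $\nabla_\omega g\in F^2_{\mathrm{pot}}(\Omega)$ and hence $V\subseteq F^2_{\mathrm{pot}}(\Omega)$. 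Straight from the definitions $V^{\perp}=L^2_{\mathrm{sol}}(\Omega)$; since $V$ is closed this gives $L^2(\Omega,\mr^N)=V\oplus L^2_{\mathrm{sol}}(\Omega)$, and intersecting with $L^2_0\supseteq V$ yields $L^2_0=V\oplus F^2_{\mathrm{sol}}(\Omega)$. Granting the displayed fact, any $f\in F^2_{\mathrm{pot}}(\Omega)\subseteq L^2_0$ decomposes as $f=p+s$ with $p\in V\subseteq F^2_{\mathrm{pot}}(\Omega)$ and $s\in F^2_{\mathrm{sol}}(\Omega)$; then $s=f-p\in F^2_{\mathrm{pot}}(\Omega)\cap F^2_{\mathrm{sol}}(\Omega)=\set{0}$, so $f\in V$. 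Thus $V=F^2_{\mathrm{pot}}(\Omega)$ — which is the asserted characterization of $F^2_{\mathrm{pot}}$ — and $L^2_0=F^2_{\mathrm{pot}}(\Omega)\oplus F^2_{\mathrm{sol}}(\Omega)$; peeling off the constants gives the orthogonal decomposition $L^2(\Omega,\mr^N)=F^2_{\mathrm{pot}}(\Omega)\oplus F^2_{\mathrm{sol}}(\Omega)\oplus\mr^N$.

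The crux — and the step I expect to be the main obstacle — is the displayed fact, the difficulty being that membership in $F^2_{\mathrm{pot}}\cap F^2_{\mathrm{sol}}$ gives no a priori $L^2$-regularity of the field, whereas the argument needs second derivatives. I would cure this by mollification. Let $f\in F^2_{\mathrm{pot}}(\Omega)\cap F^2_{\mathrm{sol}}(\Omega)$ and let $K_\delta$ be an even, smooth, unit-mass kernel. The map $\varphi\mapsto\varphi*K_\delta$ is self-adjoint on $L^2(\Omega)$, commutes with each $D_i$, preserves means, maps $L^2(\Omega)$ into $W^{2,2}(\Omega)$, and satisfies $\varphi*K_\delta\to\varphi$ in $L^2(\Omega)$ (see \cite{zhikov2,mikelic}); hence $f*K_\delta$ again lies in $F^2_{\mathrm{pot}}(\Omega)\cap F^2_{\mathrm{sol}}(\Omega)$, and it suffices to show such an element vanishes when it moreover belongs to $W^{2,2}(\Omega,\mr^N)$. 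For such $f$ the weak relations become pointwise: $D_if_j=D_jf_i$ and $\sum_iD_if_i=0$ a.e. Consequently $\sum_iD_iD_if_j=\sum_iD_iD_jf_i=D_j\sum_iD_if_i=0$ for each $j$, so integrating by parts $\int_\Omega|\nabla_\omega f_j|^2=-\int_\Omega f_j\sum_iD_iD_if_j=0$, whence $D_if_j=0$ for all $i,j$, i.e.\ each $f_j$ is $T$-invariant. Ergodicity then forces each $f_j$ to be constant, and $\int_\Omega f=0$ gives $f=0$; undoing the mollification finishes the proof. (This is precisely where ergodicity enters — without it the intersection would be all mean-zero $T$-invariant fields.)

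Finally, the curl-type characterizations for $N=2,3$ follow from the same orthogonality. For $g\in\mathcal{C}^{\infty}(\Omega)$ the fields $(-D_2g,D_1g)$ (when $N=2$) and $\nabla_\omega\times g$ (when $N=3$) are mean-zero, divergence-free, and — after one integration by parts using commutativity — orthogonal to every $\nabla_\omega\psi$, so (taking closures, and noting $\mathcal{C}^{\infty}(\Omega)$ may be replaced by $W^{1,2}(\Omega)$) these closures are contained in $F^2_{\mathrm{sol}}(\Omega)$. For the reverse inclusions: when $N=2$, given $f\in F^2_{\mathrm{sol}}(\Omega)$ the rotated field $f^{\perp}:=(-f_2,f_1)$ is mean-zero and satisfies $\int_\Omega(f^{\perp}_1D_2\psi-f^{\perp}_2D_1\psi)=-\int_\Omega f\cdot\nabla_\omega\psi=0$, so $f^{\perp}\in F^2_{\mathrm{pot}}(\Omega)=\adh_{L^2}\set{\nabla_\omega g}$, and rotating an approximating sequence back shows $f\in\adh_{L^2}\set{(-D_2,D_1)g}$; when $N=3$, it is enough — both sides being closed subspaces of $L^2_0$ — to see that any $h\in L^2_0$ orthogonal to all $\nabla_\omega\times g$ lies in $F^2_{\mathrm{pot}}(\Omega)=(F^2_{\mathrm{sol}}(\Omega))^{\perp}\cap L^2_0$, and indeed testing $\int_\Omega h\cdot(\nabla_\omega\times g)=0$ against $g=(\psi,0,0),(0,\psi,0),(0,0,\psi)$ with $\psi\in W^{1,2}(\Omega)$ reproduces exactly the three relations defining $L^2_{\mathrm{pot}}(\Omega)$, so $h\in F^2_{\mathrm{pot}}(\Omega)$; taking orthogonal complements inside $L^2_0$ gives $F^2_{\mathrm{sol}}(\Omega)\subseteq\adh_{L^2}\set{\nabla_\omega\times g}$. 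A purely spectral alternative to the displayed fact — diagonalizing the commuting self-adjoint family $iD_1,\dots,iD_N$, on whose joint spectrum ``curl-free'' makes $f$ parallel and ``divergence-free'' makes it orthogonal to the spectral variable, leaving only the atom at $0$, which the mean-zero condition annihilates — would also work, but the mollification argument above is more self-contained.
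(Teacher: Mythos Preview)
Your proof is correct and complete. The paper, however, does not prove this theorem at all: it merely states it and refers to \cite{gloria1}[Prop.~A.8, Prop.~A.9] (the passage just before the statement reads ``We recall the results given in \cite{gloria1}''). So there is no in-paper argument to compare against.

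That said, what you wrote is essentially the classical argument one finds in \cite{zhikov2} and \cite{gloria1}: the reduction to $F^2_{\mathrm{pot}}\cap F^2_{\mathrm{sol}}=\{0\}$, the mollification step to gain enough regularity, the computation $\sum_i D_iD_i f_j=0$ from curl-free plus divergence-free, and the ergodicity conclusion. The rotation trick for $N=2$ and the duality argument for $N=3$ are also standard. Your spectral aside is a nice alternative, but the mollification route is indeed the one used in the cited references and is the more self-contained choice here. In short: your proposal supplies the proof the paper outsources, and does so along the expected lines.
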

\subsection{Decomposition of purely stochastic spaces (second order)}

For the second order decomposition we define the spaces 
\begin{align*}	
  L^2_{\mathrm{ppot}}(\Omega) := \set* { A \in L^2(\Omega, \mr^{2 \times 2}_{\sym}) \; \Big| \int_\Omega A : \cof \nabla_\omega  h = 0  \quad \text{ for all } h \in W^{1,2}(\Omega, \mr^2)},
\end{align*}
and 
\[
  L^2_{\mathrm{ssol}}(\Omega) := \set* { B \in L^2(\Omega, \mr^{2 \times 2}_{\sym})\; \Big | \int_\Omega B :\nabla^2_\omega h = 0  \quad \text{ for all } h \in W^{2,2}(\Omega, \mr)}.
\]
Denote further 
\[
  F^2_{\mathrm{ppot}}(\Omega) = \set*{ A \in L^2_{\mathrm{ppot}}(\Omega) \, \Big| \int_{\Omega} A = 0 };
 \; F^2_{\mathrm{ssol}}(\Omega) = \set*{ B \in L^2_{\mathrm{ssol}}(\Omega) \, \Big| \int_{\Omega} B = 0 }.
\]
We obtain the following decomposition and density result:
\begin{theorem}\label{thm:secorderdecomp}
Let $(\Omega, \mathcal{F}, P)$ be a probability space and a $2$-dimensional ergodic system $(T_x): \Omega \to \Omega$. Then
\[
  L^2(\Omega, \mr^{2\times2}_{\sym}) = F^2_{\mathrm{ppot}}(\Omega) \oplus F^2_{\mathrm{ssol}}(\Omega) \oplus \mr^{2\times2}_{\sym},
\]
as well as
\begin{equation*}\begin{aligned}\label{eq:Fssol}
F^2_{\mathrm{ppot}}(\Omega) &= \adh_{L^2(\Omega, \mr^{2\times2}_{\sym})} \set { \nabla^2_\omega b \;|\; b \in \stochsmooth(\Omega) },\\
F^2_{\mathrm{ssol}}(\Omega) &= \adh_{L^2(\Omega, \mr^{2\times2}_{\sym})} \set { \cof \sym \nabla_\omega b\; |\; b \in \stochsmooth(\Omega, \mr^2) }.
\end{aligned}\end{equation*}
\end{theorem}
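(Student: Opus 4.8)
The plan is to reduce everything to the first-order decomposition, Theorem~\ref{thm:firstordercomp}, exploiting that in two dimensions the cofactor is especially rigid. First I would record two elementary facts. With $J=\left(\begin{smallmatrix}0&-1\\1&0\end{smallmatrix}\right)$ one has $\cof M=JMJ^{T}$, so on $\mr^{2\times 2}_{\sym}$ the cofactor is a self-adjoint linear isometry with $\cof^{2}=\id$ mapping $\mr^{2\times 2}_{\sym}$ onto itself; hence $M\mapsto\cof M$ is an isometric involution of $L^{2}(\Omega,\mr^{2\times 2}_{\sym})$. Secondly, $\div_\omega(\cof\nabla^{2}_\omega b)=0$ for $b\in\stochsmooth(\Omega)$: each row of $\cof\nabla^{2}_\omega b$ is $(D_{2}D_{2}b,-D_{1}D_{2}b)$, whose $\div_\omega$ equals $D_{1}D_{2}D_{2}b-D_{2}D_{1}D_{2}b=0$ by commutativity of the generators $D_{i}$; also $\nabla^{2}_\omega b=\sym\nabla_\omega(\nabla_\omega b)$.

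Write $V_{1}:=\adh_{L^{2}}\{\nabla^{2}_\omega b: b\in\stochsmooth(\Omega)\}$, $V_{2}:=\adh_{L^{2}}\{\cof\sym\nabla_\omega c: c\in\stochsmooth(\Omega,\mr^{2})\}$ and $W:=\adh_{L^{2}}\{\sym\nabla_\omega c: c\in\stochsmooth(\Omega,\mr^{2})\}$. Integrating by parts and using the two facts above, one checks directly: $V_{1}$, $V_{2}$, $\mr^{2\times 2}_{\sym}$ are pairwise orthogonal; $V_{1}\subseteq W$; $V_{1}\subseteq F^{2}_{\mathrm{ppot}}$ and $V_{2}\subseteq F^{2}_{\mathrm{ssol}}$; $\cof W=V_{2}$ and $\cof\,\mr^{2\times 2}_{\sym}=\mr^{2\times 2}_{\sym}$; and $W^{\perp}=\mathcal D$, where $\mathcal D:=\{M\in L^{2}(\Omega,\mr^{2\times 2}_{\sym}):\int_{\Omega}M:\nabla_\omega c=0\ \text{for all }c\in\stochsmooth(\Omega,\mr^{2})\}$ (symmetry of $M$ lets one replace $\nabla_\omega c$ by $\sym\nabla_\omega c$). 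Granting these, everything reduces to the single claim
\[
  \mathcal D=\cof V_{1}\oplus\mr^{2\times 2}_{\sym}\quad(\text{orthogonal sum}).
\]
Indeed, applying the involution $\cof$ to $L^{2}=W\oplus W^{\perp}=W\oplus\mathcal D$ gives $L^{2}=V_{2}\oplus(V_{1}\oplus\mr^{2\times 2}_{\sym})$, i.e.\ the asserted orthogonal decomposition. The identifications $F^{2}_{\mathrm{ppot}}=V_{1}$, $F^{2}_{\mathrm{ssol}}=V_{2}$ then follow: given $A\in F^{2}_{\mathrm{ppot}}$ (which has no $\mr^{2\times 2}_{\sym}$-component), write $A=v_{1}+v_{2}$ with $v_{j}\in V_{j}$; then $v_{2}=A-v_{1}\in L^{2}_{\mathrm{ppot}}\cap V_{2}$ forces $\cof v_{2}\in W\cap\mathcal D=\{0\}$, so $v_{2}=0$; similarly for $B\in F^{2}_{\mathrm{ssol}}$ one gets $v_{1}\in L^{2}_{\mathrm{ssol}}\cap V_{1}$ and then $\|v_{1}\|^{2}=\lim_{k}\int_{\Omega}v_{1}:\nabla^{2}_\omega b^{(k)}=0$ along any sequence $\nabla^{2}_\omega b^{(k)}\to v_{1}$.

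The claim is the crux. The inclusion "$\supseteq$" is the identity $\div_\omega\cof\nabla^{2}_\omega b=0$ together with $\int_{\Omega}\nabla_\omega c=0$. For "$\subseteq$", let $M\in\mathcal D$; subtracting $\int_{\Omega}M\in\mr^{2\times 2}_{\sym}$ we may assume $\int_{\Omega}M=0$. Testing the defining relation of $\mathcal D$ with $c=g\,e_{i}$ shows each row of $\cof M$ is curl-free in the $L^{2}_{\mathrm{pot}}$ sense and mean-zero, so Theorem~\ref{thm:firstordercomp} with $N=2$ gives $\cof M=\nabla_\omega G$ for some $G$ in the completion $\mathcal W^{1,2}(\Omega,\mr^{2})$. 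The delicate point is that $G$, and the potentials below, need not be genuine $L^{2}$ functions, so one argues through smooth approximants: pick $G^{k}\in\stochsmooth(\Omega,\mr^{2})$ with $\nabla_\omega G^{k}\to\cof M$ in $L^{2}$; symmetry of $\cof M$ gives $\skw\nabla_\omega G^{k}\to0$. Apply Theorem~\ref{thm:firstordercomp} to the honest $L^{2}$ field $G^{k}$: $G^{k}=\nabla_\omega b^{k}+\sigma^{k}+a^{k}$ with $\sigma^{k}\in F^{2}_{\mathrm{sol}}(\Omega)$, $a^{k}\in\mr^{2}$. The functional calculus of the commuting self-adjoint operators $D_{1},D_{2}$ (an elliptic-regularity estimate) bounds all second derivatives of $b^{k}$ and of the solenoidal potential $g^{k}$ of $\sigma^{k}$ by $\|\nabla_\omega G^{k}\|_{L^{2}}$, so $b^{k}\in\mathcal W^{2,2}(\Omega)$, $\nabla_\omega\sigma^{k}$ is a well-defined $L^{2}$ field, and $\nabla^{2}_\omega b^{k}=\nabla_\omega G^{k}-\nabla_\omega\sigma^{k}$. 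Moreover the same calculus yields the exact identity $\|\nabla_\omega\sigma^{k}\|_{L^{2}}=\|\nabla^{2}_\omega g^{k}\|_{L^{2}}=\|\Delta_\omega g^{k}\|_{L^{2}}$ (both sides are $\int|\lambda|^{4}$ against the spectral measure of $g^{k}$), and $\Delta_\omega g^{k}$ is, up to a constant, the scalar $\curl_\omega G^{k}=\curl_\omega\sigma^{k}$, controlled by $\skw\nabla_\omega G^{k}\to0$. Hence $\nabla^{2}_\omega b^{k}\to\cof M$ in $L^{2}$, and since $V_{1}$ is closed and contains each $\nabla^{2}_\omega b^{k}$, we conclude $\cof M\in V_{1}$, i.e.\ $M\in\cof V_{1}$. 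The main obstacle is precisely this step: upgrading the first-order Helmholtz decomposition to a second-order one in a setting with no Poincar\'e inequality, which is why the argument must pass through smooth approximants and the spectral identity $\|\nabla^{2}_\omega u\|_{L^{2}}=\|\Delta_\omega u\|_{L^{2}}$ rather than manipulate the (possibly non-$L^{2}$) potentials directly.
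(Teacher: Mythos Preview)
Your argument is correct and considerably more explicit than the paper's, which does not actually prove the theorem but simply states that ``the theorem can be proved by the same methods as in \cite{gloria1}''. Your route---reducing the second-order statement to the first-order decomposition (Theorem~\ref{thm:firstordercomp}) by exploiting that in two dimensions $\cof$ is a self-adjoint isometric involution of $\mr^{2\times 2}_{\sym}$---is a clean, dimension-specific argument that the paper neither gives nor hints at. The orthogonality and inclusion checks are routine and correct; the identification of $F^2_{\mathrm{ppot}}$ and $F^2_{\mathrm{ssol}}$ with $V_1,V_2$ from the decomposition is also fine.

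The only step that needs care, and you flag it yourself, is the ``elliptic regularity'' assertion that the Helmholtz components of a smooth $G^k$ admit one more $L^2$ derivative. Your use of the joint spectral calculus for the commuting self-adjoint generators $iD_1,iD_2$ (recorded in the paper's setup) is the right tool: in the spectral representation the Helmholtz projections become the pointwise multipliers $\lambda\lambda^T/|\lambda|^2$ and $I-\lambda\lambda^T/|\lambda|^2$, which are bounded by $1$ and hence preserve $W^{1,2}$; the identities $\|\nabla^2_\omega u\|_{L^2}=\|\Delta_\omega u\|_{L^2}$ and $\|\nabla_\omega\sigma^k\|_{L^2}=\|\Delta_\omega g^k\|_{L^2}=\|\curl_\omega G^k\|_{L^2}$ are then exact, and the latter is indeed controlled by $\|\skw\nabla_\omega G^k\|_{L^2}\to 0$. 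One trivial slip: the two rows of $\cof\nabla^2_\omega b$ are $(D_2^2 b,-D_1D_2 b)$ and $(-D_1D_2 b,D_1^2 b)$, not both equal to the first; the divergence-free conclusion is of course unaffected.
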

The theorem can be proved by the same methods as in \cite{gloria1}.
\subsection{Decomposition of mixed spaces}
Let $S \subset \mr^M$ be a bounded Lipschitz domain, and let $L = M+N$. By $\nabla$ we will denote for maps with the domain $\mr^N \times S$ the operator $(\partial_1, \ldots, \partial_L)$ and for maps with the domain $\Omega \times S$ the operator $(D_1, \ldots, D_N, \partial_{N+1}, \ldots, \partial_L)$; since from the context the definition used is clear, we will not distinguish them in notation.
Furthermore in view of \eqref{eq:sobolevrealizations} after identification functions of the latter class with their realizations both operator coincide.   Additionally we define in both cases $\div = \nabla \cdot$.
\subsubsection{Trace Theorems}
In this section we briefly discuss a generalization of the trace operator for functions with the domain $\Omega \times S$.
The statements and proofs are analogous results to the classical results for Sobolev functions (see \cite{GR86}).

We define the extended trace
\begin{align*}	
\gamma: W^{1,2}(\randomspace \times S) \to L^2(\randomspace \times \partial S)\\
\gamma( \psi )(\randomelement, y) = \widetilde \gamma(\psi(\randomelement))(y)
\end{align*}
where
\[
  \widetilde \gamma : W^{1,2}(S) \to L^2(\partial S)
\]
is the classical trace.
It is easily seen that the map $\gamma$ is linear and continuous.
Furthermore the space $\gamma(W^{1,2}(\randomspace \times S))$ is a closed subspace of $L^2(\randomspace \times \partial S)$, which we will denote by
\[
  W^{1/2}(\randomspace \times \partial S) := \gamma\paren[\big]{W^{1,2}(\randomspace \times S)}.
\]
Together with the norm
\[
  \norm{ \mu }_{W^{1/2}(\randomspace \times \partial S) } = \inf_{\psi \in W^{1,2}(\randomspace \times S), \gamma(\psi) = \mu} \norm { \psi }_{W^{1,2}(\randomspace \times S)}
\]
the space is complete. To extend the trace, we define
on $ \stochsmooth(\Omega, C^\infty(\overline S))^L$ the norm 
\[
  \norm{ g }_{W^{1,2}_{\div}(\Omega \times S)}^2 = \norm {g}^2_{L^2(\Omega \times S)} + \norm { \div g}^2_{L^2(\Omega \times S)}
\]
and denote the completion of the space as 
\[
  W^{1,2}_{\div} ( \randomspace \times \twodomain) := \adh_{W^{1,2}_{\div}(\Omega \times S)} \; \stochsmooth(\Omega, C^\infty(\overline S))^L,
\]
analogously to the real-variant $W^{1,2}_{\div}(\mr^N \times \twodomain)$.

Furthermore we split $g = (g^s, g^r)$ into $g^s = (g_1, \ldots, g_N)$ and $g^r = (g_{N+1}, \ldots, g_L)$.

Let $\Gamma = \randomspace \times \partial \twodomain$ be the boundary of $\randomspace \times \twodomain$, and let $\nu$ be the outward-normal of $S$.
\begin{lemma}[Normal Trace Theorem]\label{lem:normaltrace}
The mapping $\gamma_\nu: g \mapsto \restrict{g^r}{\Gamma} \cdot \nu$ defined for $g \in  \stochsmooth(\Omega, C^\infty(\overline S))^L$ extends uniquely to a continuous, linear mapping $\gamma_\nu: W^{1,2}_{\div}( \randomspace \times \twodomain) \to (W^{1/2}(\randomspace \times \partial \twodomain))'$.
\end{lemma}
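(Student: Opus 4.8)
The plan is to mimic the classical proof of the normal trace theorem for $H(\div)$-fields (see \cite{GR86}); the only new ingredients are the bookkeeping across the product domain $\randomspace\times\twodomain$ and the stochastic integration-by-parts rule in the $\omega$-variables. First I would establish a Green identity: for every $g\in\stochsmooth(\Omega,C^\infty(\overline S))^L$ and every $\psi\in W^{1,2}(\randomspace\times\twodomain)$,
\[
\int_{\randomspace\times\twodomain}\bigl(g\cdot\nabla\psi+(\div g)\,\psi\bigr)\,\td x\,\drandommeasure
=\int_{\randomspace\times\partial\twodomain}(g^r\cdot\nu)\,\gamma(\psi)\,\td\sigma\,\drandommeasure,
\]
with $\td\sigma$ the surface measure on $\partial S$. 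To see this, write $\nabla=(D_1,\dots,D_N,\partial_{N+1},\dots,\partial_L)$ and $\div g=\sum_{i\le N}D_i g_i+\sum_{j>N}\partial_j g_j$. For a.e.\ fixed $x'\in S$ one has $g_i(\cdot,x')\in\stochsmooth(\Omega)$ and $\psi(\cdot,x')\in W^{1,2}(\Omega)$, so $\int_\Omega g_i\,D_i\psi=-\int_\Omega (D_i g_i)\,\psi$ (the $iD_i$ being self-adjoint and densely defined), and integrating over $x'$ disposes of the stochastic directions with no boundary term. For the Euclidean directions, for a.e.\ fixed $\omega$ the field $g(\omega,\cdot)$ is smooth on $\overline S$ and $\psi(\omega,\cdot)\in W^{1,2}(S)$, so the classical divergence theorem on the Lipschitz domain $S$ produces exactly $\int_{\partial S}(g^r(\omega,\cdot)\cdot\nu)\,\widetilde\gamma(\psi(\omega,\cdot))$; integrating in $\omega$ and recalling $\gamma(\psi)(\omega,\cdot)=\widetilde\gamma(\psi(\omega,\cdot))$ gives the identity. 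It is convenient to check it first on the dense subclass $\stochsmooth(\Omega)\otimes C^\infty(\overline S)$ and then extend to all $\psi\in W^{1,2}(\randomspace\times\twodomain)$ by continuity of both sides in the $W^{1,2}(\randomspace\times\twodomain)$-norm.

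Next I would define, for smooth $g$ and $\mu\in W^{1/2}(\randomspace\times\partial\twodomain)$, the number $\langle\gamma_\nu g,\mu\rangle:=\int_{\randomspace\times\twodomain}\bigl(g\cdot\nabla\psi+(\div g)\,\psi\bigr)\,\td x\,\drandommeasure$ for an arbitrary $\psi\in W^{1,2}(\randomspace\times\twodomain)$ with $\gamma(\psi)=\mu$. By the Green identity this equals $\int_{\randomspace\times\partial\twodomain}(g^r\cdot\nu)\,\mu\,\td\sigma\,\drandommeasure$, so it depends on $\psi$ only through $\mu$ and, on smooth $g$, coincides with the $L^2(\randomspace\times\partial\twodomain)$-pairing against $\restrict{g^r}{\Gamma}\cdot\nu$, which justifies the name. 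Cauchy--Schwarz gives
\[
|\langle\gamma_\nu g,\mu\rangle|\le\|g\|_{L^2(\randomspace\times\twodomain)}\|\nabla\psi\|_{L^2}+\|\div g\|_{L^2}\|\psi\|_{L^2}\le\|g\|_{W^{1,2}_{\div}(\randomspace\times\twodomain)}\|\psi\|_{W^{1,2}(\randomspace\times\twodomain)},
\]
and passing to the infimum over admissible $\psi$ yields $\|\gamma_\nu g\|_{(W^{1/2}(\randomspace\times\partial\twodomain))'}\le\|g\|_{W^{1,2}_{\div}(\randomspace\times\twodomain)}$. Since $\stochsmooth(\Omega,C^\infty(\overline S))^L$ is by construction dense in $W^{1,2}_{\div}(\randomspace\times\twodomain)$ and the target space is complete, the map $g\mapsto\gamma_\nu g$ extends uniquely to a continuous linear map on all of $W^{1,2}_{\div}(\randomspace\times\twodomain)$, uniqueness and linearity being automatic from density.

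The genuinely delicate step is the Green identity: one must handle the Bochner-valued setting with care --- measurability of the fibrewise integrals, the reduction to the classical Sobolev trace on $S$ for a.e.\ $\omega$ (using the nice realizations of $W^{1,2}(\Omega)$-functions, cf.\ \eqref{eq:sobolevrealizations}), and the check that the stochastic derivatives $D_i$ contribute no boundary term. Once that identity is in hand, the definition, the bound and the extension are routine functional analysis.
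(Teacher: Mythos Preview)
Your proposal is correct and follows essentially the same route as the paper: establish the Green identity for smooth $g$ (the paper states it only for smooth $\varphi$ but your extension to $\psi\in W^{1,2}(\Omega\times S)$ is the obvious and necessary completion), then use continuity of the left-hand side in the $W^{1,2}_{\div}$-norm together with density to extend. Your write-up is in fact more detailed than the paper's, which compresses the well-definedness, the norm bound, and the extension into a single sentence.
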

\begin{proof}
Using integration by parts for smooth functions $\varphi \in \stochsmooth(\Omega, C^\infty(\overline S)), g \in \stochsmooth(\Omega, C^\infty(\overline S))^L$ we have
  \begin{align*}	
  &\int_{\Omega \times S} g(\omega,x) \cdot  \nabla \varphi(\omega,x) \drandommeasure\td x +  \int_{\Omega \times S} (\div g)(\omega,x)  \varphi(\omega,x) \drandommeasure\td x \\
  &\hspace{6.3cm} = \int_{\Omega} \int_{\partial S} \nu(x) \cdot g^r(x, \omega) \varphi(x,\omega) \td x\, \drandommeasure.
  \end{align*}
 By continuity of the left-hand side in $g$ w.r.t.\ the $W^{1,2}_\div(\Omega \times S)$ topology 
there exists an extension, and by densitiy  the extension is unique.
\end{proof}

To simplify the notation we will write $\restrict{\psi}{\randomspace \times \partial \twodomain}$ for $\gamma(\psi)$ and $g^r\cdot \nu$ for $\gamma_\nu(g)$.

\subsubsection{Mixed differential equations}
 We define the spaces of test functions $\testfunctions = C^\infty_0(\mr^N \times \twodomain)$ and $X := C^\infty_0( \mr^N) \otimes C^\infty(\overline \twodomain)$, 
 and introduce for $f,g \in L^2_{\loc}(\mr^N, L^2(\twodomain))^{L}$ the notation
\begin{align*}	
  \nabla \times f = 0 \text{ in } \distributions  &\;:\Longleftrightarrow\; \int_{\R^N \times S} \paren[\big]{f_j \partial_i \varphi - f_i \partial_j \varphi }= 0 \, && \text{ for all } i,j \text{ and } \varphi \in \testfunctions,\\
  \div g = 0 \text{ in } \distributions &\;:\Longleftrightarrow\; \int_{\R^N \times S} \scalar[\Big]{g, \nabla \varphi } = 0 &&\text{ for all } \varphi \in  \testfunctions.
\end{align*} 

If additionally $g \in L^2_{\loc}(\mr^N, W^{1,2}_{\div}(S))$, then $ g^r\cdot \nu$ is almost everywhere well-defined and we set
\begin{align*}	
  \div g = 0 \text{ in } X' &\;:\Longleftrightarrow\; \int_{\R^N \times \R^M} \scalar[\Big]{g, \nabla \varphi } = \int_{ \mr^N } \int_{\partial\twodomain} \scalar { \varphi g^r, \nu }
                                                   \text{ for all } \varphi \in  X\matt.
\end{align*} 

Since  $\testfunctions \subset X$ the conditions immediately split into
\begin{align*}	
  \div g = 0 \text{ in } X' &\iff \div g  = 0\text{ in } \distributions \quad\text{ and }   \quad  g^{r}\cdot \nu  = 0 \quad \text{ a.e.\ on } \mr^N \times \partial S.
\end{align*}
Similar to before we define 
\[
   \widetilde \testfunctions = \stochsmooth( \randomspace ,  C^\infty_0( \twodomain))  \quad \text{ and }\quad \widetilde X = \set* { \psi \in \stochsmooth( \randomspace, C^\infty(\overline \twodomain)):  \int_{\randomspace \times \twodomain} \nabla \psi = 0}, 
\]
and for $f,g \in L^2(\Omega \times S, \mr^L)$ we denote
\begin{align*}	
    \nabla \times f = 0 \text{ in } \widetilde\distributions  &\;:\Longleftrightarrow\;\int_{\randomspace \times \twodomain} \paren[\big]{f_j \partial_i \psi - f_i \partial_j \psi }= 0 \, && \text{ for all } i,j \text{ and } \psi \in \widetilde\testfunctions,\\
    \div g = 0  \text{ in } \widetilde\distributions & \;:\Longleftrightarrow\; \int_{\randomspace \times \twodomain } \scalar { g, \nabla \psi } = 0 && \text{ for all } \psi \in \widetilde \testfunctions.
\end{align*}
Next we define for $g \in W^{1,2}_{\div}(\Omega \times S)$:
\[
    \div g = 0 \quad \text{ in } \widetilde X'  :\Longleftrightarrow \div g \text{ in } \widetilde {\testfunctions}' \quad \text{ and } g^r \cdot \nu = 0 \quad \text{ a.e.\ on } \Omega \times \partial S.
\]
Note that $\nabla \times f = 0$ in $\widetilde \testfunctions'$ holds, iff.\ almost all realization satisfy $\nabla \times f = 0$ in $\testfunctions'$, and similarly for $\div g$.

Finally define the sets
\begin{align*}	
  L^2_{\mathrm{pot}}(\Omega \times S) &= \set { f \in (L^2(\randomspace \times \twodomain))^{L}: \quad \nabla \times f = 0 \text{ in } \widetilde \testfunctions'},\\
  L^2_{\mathrm{sol}}(\Omega \times S)&= \set { g \in (L^2(\randomspace \times \twodomain))^{L}: \quad \div  g = 0 \text{ in } \widetilde X'},
\end{align*}
and
\begin{align*}	
F^2_{\mathrm{pot}}(\Omega \times S) &= \set*{ f \in L^2_{\mathrm{pot}}(\Omega \times S): \int_{\Omega \times S} f (\omega, x) \drandommeasure\td x= 0 }, \\
    F^2_{\mathrm{sol}}(\Omega \times S) &= \set*{ g \in L^2_{\mathrm{sol}}(\Omega \times S): \int_{\Omega \times S} g(\omega, x) \drandommeasure\td x = 0 }.
    \end{align*}

We are now able to state the decomposition theorem for the mixed-spaces:
\begin{theorem}\label{thm:weyl}
Let $(\Omega, \mathcal{F}, P)$ be a probability space {with $\mathcal F$ countable generated and} an $N$-dimensional ergodic system $(T_x): \Omega \to \Omega$ and let $S \subset \mr^M$ be a bounded Lipschitz domain.
 Then:
\begin{enumerate}[(i)]
\item 
 $ L^2(\Omega \times S) = F^2_{\mathrm{pot}}(\Omega \times S) \oplus F^2_{\mathrm{sol}}(\Omega \times S) \oplus \mr^L.$
\item 
$  F^2_{\mathrm{pot}}(\Omega \times S) = \adh_{L^2(\Omega\times S)} \set { \nabla g: \, g \in W^{1,2}(\Omega \times S)}.$
\item If $L =3$, i.e.\ $M = 1, N=2$ or $M = 2, N =1$ then
\begin{align*}
  F^2_{\mathrm{sol}}(\Omega \times S) &= \adh_{L^2(\Omega\times S,\mr^{L})} \set { \nabla \times g: \, g \in W^{1,2}(\Omega, \mr^{L})}.
\end{align*}
\end{enumerate}
\end{theorem}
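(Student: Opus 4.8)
The plan is to deduce all three statements from the purely stochastic decomposition (Theorem~\ref{thm:firstordercomp}) and the classical Helmholtz--Weyl decomposition on the bounded Lipschitz domain $S$, glued together by Fubini-type arguments in the two groups of variables. First I would set $\mathcal G:=\adh_{L^2(\Omega\times S)}\set{\nabla g:g\in W^{1,2}(\Omega\times S)}$, a closed subspace of $L^2(\Omega\times S,\mr^L)$, and let $\mathcal G^\perp$ be its orthogonal complement, so that $L^2(\Omega\times S,\mr^L)=\mathcal G\oplus\mathcal G^\perp$. Since $\nabla\times(\nabla g)=0$ in $\widetilde\distributions$ and this condition is stable under $L^2$-convergence, $\mathcal G\subseteq L^2_{\mathrm{pot}}(\Omega\times S)$; keeping track of averages -- the $\Omega$-components $D_ig$ have vanishing $\omega$-mean pointwise in $x$, while the $S$-components $\partial_jg$ contribute only boundary integrals -- locates the finite-dimensional factor $\mr^L$ and reduces (i) and (ii) to the identifications $\mathcal G=L^2_{\mathrm{pot}}(\Omega\times S)$ (up to the constants pointing in the $\Omega$-directions) and $\mathcal G^\perp=L^2_{\mathrm{sol}}(\Omega\times S)$.

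For $\mathcal G^\perp$, integration by parts for $g\in\stochsmooth(\Omega)\otimes C^\infty(\overline S)$ shows that $h\perp\nabla g$ for all such $g$ precisely when $\div h=0$ distributionally on $\Omega\times S$ and $h^r\cdot\nu=0$ on $\Omega\times\partial S$ in the sense of the normal-trace theorem, Lemma~\ref{lem:normaltrace}; by density in the graph norm this is exactly membership in $L^2_{\mathrm{sol}}(\Omega\times S)$, hence $\mathcal G^\perp=L^2_{\mathrm{sol}}(\Omega\times S)$. The remaining point for (i) and (ii) is a Poincaré lemma for the mixed operator $\nabla=(D_1,\dots,D_N,\partial_{N+1},\dots,\partial_L)$: given $f\in L^2_{\mathrm{pot}}(\Omega\times S)$, produce $g\in W^{1,2}(\Omega\times S)$ with $\nabla g=f$ up to an $\mr^N$-valued constant. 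I would peel off the $S$-part first: for a.e.\ $\omega$ the component $f^r(\omega,\cdot)\in L^2(S,\mr^M)$ is curl-free, so -- using that $S$ is topologically trivial in the relevant degree, which in the applications of this paper is automatic since $S$ is an interval -- the classical Poincaré lemma yields a primitive $\rho(\omega,\cdot)\in W^{1,2}(S)$, normalized by $\int_S\rho(\omega,\cdot)=0$ so as to depend measurably on $\omega$. The mixed curl conditions pairing an $\Omega$-index with an $S$-index identify $D_i\rho-f^s_i$ as independent of $x$, and since $\int_SD_i\rho(\omega,\cdot)=0$ this quantity lies in $L^2(\Omega)$; hence $\rho\in W^{1,2}(\Omega\times S)$ and $f-\nabla\rho$ has vanishing $S$-part and an $x$-independent, curl-free $\Omega$-part. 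Theorem~\ref{thm:firstordercomp} then provides $\sigma\in\stochsobolev^{1,2}(\Omega)$ and $c\in\mr^N$ with that $\Omega$-part equal to $\nabla_\omega\sigma+c$, so $f=\nabla(\rho+\sigma)+(c,0)$; reinserting the averages yields (i) and (ii).

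For (iii), when $L=3$ the inclusion of curls into $L^2_{\mathrm{sol}}(\Omega\times S)$ is the routine direction (a mixed curl is divergence-free, and the vector potential may be chosen so that its normal trace vanishes), while the converse -- that every solenoidal field is an $L^2$-limit of curls $\nabla\times g$ with $g\in W^{1,2}$ -- is obtained by the same reduction as above: use the classical vector-potential theorem on the bounded Lipschitz domain $S$ (again via its topological triviality in the relevant codimension) to remove the $S$-part of a given $h\in F^2_{\mathrm{sol}}(\Omega\times S)$, then invoke the purely stochastic identification $F^2_{\mathrm{sol}}(\Omega)=\adh\set{\nabla_\omega\times g}$ from Theorem~\ref{thm:firstordercomp} for the $x$-independent remainder, and recombine the two potentials.

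The hard part is the mixed Poincaré/vector-potential lemma (the last two steps). Two points need care: the measurable selection of the classical primitive $\rho(\omega,\cdot)$ together with the verification that it carries an $L^2$ stochastic gradient, so that $\rho\in W^{1,2}(\Omega\times S)$ and $f-\nabla\rho$ remains an admissible competitor with controlled $\Omega$-derivatives; and the bookkeeping that matches the mean-zero spaces $F^2_{\mathrm{pot}},F^2_{\mathrm{sol}}$ with the $L^2$-closures of gradients and curls together with the finite-dimensional factor $\mr^L$. The classical inputs (Helmholtz/Weyl and normal-trace theory on Lipschitz domains) and the purely stochastic inputs (Theorem~\ref{thm:firstordercomp}) are all available, so the proof amounts to assembling them variable-group by variable-group.
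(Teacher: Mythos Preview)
Your route is genuinely different from the paper's. The paper does \emph{not} attempt a mixed Poincar\'e lemma or any construction of primitives. Its engine is Lemma~\ref{lem:ortho}: for $f\in L^2_{\mathrm{pot}}(\Omega\times S)$ and $g\in L^2_{\mathrm{sol}}(\Omega\times S)$ one has $\scalar{f,g}_{L^2}=|S|^{-1}(\int f)\cdot(\int g)$. This orthogonality is obtained by extending $f,g$ periodically in the $S$-variable, observing (Lemma~\ref{lem:productdynamic}) that $T\times(\text{shift on }[0,1)^M)$ is again ergodic, and then running a div--curl/compensated-compactness argument on the rescaled realizations $f^\eps(x,y)=f(T_{\eps^{-1}x}\omega,\eps^{-1}y)$, using Birkhoff's theorem on the product system to identify the weak limit of $f^\eps\cdot g^\eps$. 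Once Lemma~\ref{lem:ortho} is in hand, the inclusion $L^2_{\mathrm{sol}}\subseteq[F^2_{\mathrm{pot}}]^\perp$ is immediate; the reverse inclusion is the mollification argument you also use, and (ii), (iii) drop out by standard Hilbert-space duality. So the paper trades your explicit primitives for an orthogonality statement proved with ergodic-theoretic tools; this avoids any topological hypothesis on $S$ and never needs to discuss whether an $\omega$-wise primitive carries a stochastic derivative.

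Your sketch is plausible but has a real gap precisely where you flag it. The argument ``the mixed curl conditions identify $D_i\rho-f^s_i$ as independent of $x$'' presupposes that $D_i\rho$ exists; as written this is circular. One can rescue it: testing the mixed curl identity against $\psi=\phi(\omega)\eta(x)$ with $\eta\in C^\infty_0(S)$ and integrating by parts in $x$ yields $\int_\Omega[\rho(\omega,x)D_i\phi(\omega)+f^s_i(\omega,x)\phi(\omega)]\,d\omega$ constant in $x$, and the normalization $\int_S\rho=0$ then pins down $D_i\rho=f^s_i-|S|^{-1}\int_S f^s_i\in L^2$. You should make this step explicit. A second point: after reducing to the $x$-independent remainder $\bar f^s(\omega)$, Theorem~\ref{thm:firstordercomp} only gives $\bar f^s-c\in\adh\{\nabla_\omega g\}$, not an honest potential $\sigma\in W^{1,2}(\Omega)$, so your conclusion should read ``$f$ lies in the $L^2$-closure of mixed gradients plus a constant'' rather than ``$f=\nabla(\rho+\sigma)+(c,0)$''. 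Finally, your argument leans on the classical Poincar\'e lemma on $S$, i.e.\ on $S$ being simply connected; this is fine for the applications in the paper (where $S=I$), but the theorem is stated for arbitrary bounded Lipschitz $S$, and the paper's compensated-compactness proof does not visibly need this restriction.
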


\subsubsection{Orthogonality of $\div$ and $\nabla$}
The decomposition will follow easily, once we have proved the following lemma:
\begin{lemma}\label{lem:ortho}
  Let $f \in L^2_{\mathrm{pot}}(\Omega \times S)$ and $g \in L^2_{\mathrm{sol}}(\Omega \times S)$. Then
\begin{align*}	
&\int_{\randomspace \times \twodomain} f(\randomelement, y)\cdot g(\randomelement, y) \drandommeasure \td y \\
&\quad= \frac 1 { \abs S}\paren*{ \int_{\randomspace \times \twodomain} f(\randomelement, y) \drandommeasure \td y}\cdot \paren*{ \int_{\randomspace \times \twodomain} g(\randomelement, y) \drandommeasure \td y}.
\end{align*}
Especially if additionally $\int f = 0$ or $\int g = 0$, then $\scalar{ f,g}_{L^2(\Omega \times S)} = 0$.
\end{lemma}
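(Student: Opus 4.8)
The plan is to reduce the claim, via a density argument, to the case of smooth representatives and then to apply the integration-by-parts identity together with the boundary conditions encoded in the two spaces. First I would recall that $L^2_{\mathrm{pot}}(\Omega\times S)$ is, by Theorem~\ref{thm:weyl}(ii), the $L^2$-closure of $\set{\nabla g : g\in W^{1,2}(\Omega\times S)}$, and that elements $g\in L^2_{\mathrm{sol}}(\Omega\times S)$ satisfy $\div g=0$ in $\widetilde X'$, which in particular means $\div g=0$ in $\widetilde\testfunctions'$ and $g^r\cdot\nu = 0$ a.e.\ on $\Omega\times\partial S$. Since both sides of the asserted identity are continuous in $f\in L^2(\Omega\times S)$ (with $g$ fixed), it suffices to verify the identity for $f=\nabla\psi$ with $\psi\in \stochsmooth(\Omega)\otimes C^\infty(\overline S)$ (or any element of $W^{1,2}(\Omega\times S)$ for which the integration by parts of Lemma~\ref{lem:normaltrace} is available); the general $f\in L^2_{\mathrm{pot}}$ is an $L^2$-limit of such gradients. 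Actually the cleanest route is to fix $g\in L^2_{\mathrm{sol}}$, observe that $g\in W^{1,2}_{\div}(\Omega\times S)$ is needed for the normal trace to make sense, and then note that $\div g\in L^2(\Omega\times S)$ is a constant: indeed $\div g=0$ in $\widetilde\testfunctions'$ forces $\div g$ to be independent of the $S$-variable and of $\omega$ (here one uses ergodicity, exactly as in the purely stochastic case of Theorem~\ref{thm:firstordercomp}), and then the normal-trace identity of Lemma~\ref{lem:normaltrace} pins down the value of that constant in terms of $\int g$.

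More concretely, for $\psi\in\stochsmooth(\Omega)\otimes C^\infty(\overline S)$ the identity of Lemma~\ref{lem:normaltrace} reads
\[
\int_{\Omega\times S}\scalar{g,\nabla\psi} + \int_{\Omega\times S}(\div g)\,\psi
= \int_\Omega\int_{\partial S}\scalar{g^r,\nu}\,\psi.
\]
If $g\in L^2_{\mathrm{sol}}(\Omega\times S)$ then the boundary term vanishes because $g^r\cdot\nu=0$ a.e.\ on $\Omega\times\partial S$, so $\int_{\Omega\times S}\scalar{g,\nabla\psi} = -\int_{\Omega\times S}(\div g)\psi$. On the other hand, testing $\div g=0$ in $\widetilde\testfunctions'$ and in the $x'$-direction against translation-invariant test functions shows, via the ergodic theorem (Theorem~\ref{thmergodic}) and the structure of the infinitesimal generators $D_i$, that $\div g$ equals a constant $c\in\mr$ a.e.\ on $\Omega\times S$; evaluating the boundary identity once more with $\psi\equiv 1$ (extended appropriately, or by approximation) gives $c\,\abs S = \int_{\Omega\times S}\scalar{g^r,\nu} = 0$ if $\partial S$-trace vanishes — wait, more carefully: with $\psi\equiv 1$ one gets $0 + c\abs S = 0$, hence $c=0$, so $\div g=0$ a.e. Then $\int_{\Omega\times S}\scalar{g,\nabla\psi}=0$ for all such $\psi$. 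For the general statement with nonzero means, one writes $f = (f-\frac1{\abs S}\int f) + \frac1{\abs S}\int f$ and $g = (g-\frac1{\abs S}\int g)+\frac1{\abs S}\int g$; the first summand of $f$ lies in $F^2_{\mathrm{pot}}$ and the first summand of $g$ in $F^2_{\mathrm{sol}}$, each has zero mean, and by the computation just done every cross term involving at least one zero-mean factor vanishes (one checks that constants are orthogonal to $F^2_{\mathrm{pot}}$ and to $F^2_{\mathrm{sol}}$, which is immediate from the definitions). What survives is exactly $\frac1{\abs S}(\int f)\cdot(\int g)$ after accounting for the normalization, which is the asserted formula.

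The main obstacle I anticipate is the justification that $\div g$ is genuinely constant — i.e.\ that the weak condition $\div g=0$ in $\widetilde\testfunctions'$ plus the normal-trace condition upgrades to $\div g=0$ pointwise a.e. This requires combining two facts: (a) in the $S$-variable, $\div g=0$ in $C^\infty_0(S)$-test functions for a.e.\ realization, so $\div g$ is harmonic-like/constant in the classical sense only after also using (b) the stochastic directions, where one invokes that the kernel of each $D_i$ on $L^2(\Omega)$ consists of constants under ergodicity (Remark~\ref{rem:ergo-weak} and the discussion of the generators). Handling the interface between the Bochner-space structure $W^{1,2}(I,L^2(\Omega))\cap L^2(I,W^{1,2}(\Omega))$ and the pointwise-in-$\omega$ realizations — i.e.\ the measurability subtleties flagged in Remark~\ref{borism8} — is the delicate point; once the density of $\stochsmooth(\Omega)\otimes C^\infty(\overline S)$ in $W^{1,2}(\Omega\times S)$ and in $W^{1,2}_{\div}(\Omega\times S)$ is granted, together with Lemma~\ref{lem:normaltrace}, the rest is a routine bilinear-continuity and mean-splitting argument.
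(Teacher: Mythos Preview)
Your approach has a genuine circularity. You invoke Theorem~\ref{thm:weyl}(ii) to write a general $f\in L^2_{\mathrm{pot}}(\Omega\times S)$ as an $L^2$-limit of gradients $\nabla\psi$, and then run integration by parts against $g\in L^2_{\mathrm{sol}}$. But in this paper Theorem~\ref{thm:weyl}(ii) is proved \emph{via} Theorem~\ref{thm:weyl}(i), and the very first line of the proof of (i) is ``The orthogonality between $F^2_{\mathrm{pot}}$ and $L^2_{\mathrm{sol}}$ follows from Lemma~\ref{lem:ortho}.'' So you are assuming the lemma in order to prove it. The integration-by-parts step you carry out establishes only that $g\in L^2_{\mathrm{sol}}$ is orthogonal to $\{\nabla\psi:\psi\in\stochsmooth(\Omega)\otimes C^\infty(\overline S)\}$, which is essentially the definition of $L^2_{\mathrm{sol}}$; the actual content of the lemma is that $g$ is orthogonal to every \emph{curl-free} $f$, and passing from ``curl-free'' to ``approximable by gradients'' in the mixed space $\Omega\times S$ is exactly the nontrivial statement that the lemma is meant to unlock.

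This is not a technicality that can be patched by a density remark: the absence of a Poincar\'e inequality in the $\Omega$-directions means a curl-free field need not have a primitive in $L^2(\Omega\times S)$, so the identification $F^2_{\mathrm{pot}}=\adh_{L^2}\{\nabla\psi\}$ genuinely requires the Weyl decomposition, which in turn rests on the lemma. The paper's proof avoids the circle entirely by working directly with the defining PDE conditions: it extends $f,g$ periodically in the $S$-variable, passes to realizations $f^\eps(x,y)=f(T_{\eps^{-1}x}\omega,\eps^{-1}y)$, uses Birkhoff's theorem on the product ergodic system $\Omega\times[0,1)^M$ (Lemma~\ref{lem:productdynamic}) to identify the weak limit of $f^\eps\cdot g^\eps$, and independently identifies the same limit as $\overline f\cdot\overline g$ by a div--curl / compensated-compactness argument on the realizations. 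Your mean-splitting and ergodicity remarks are correct in spirit, but they sit on top of the missing step rather than replacing it.
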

Before proving the lemma, we first show that `multiplying' an ergodic system with a periodic system yields once more an ergodic system.
\begin{lemma}\label{lem:productdynamic}
Let $(\Omega, \mathcal{F}, P)$ be a probability space with an $N$-dimensional ergodic system $(T_x): \Omega \to \Omega$.
Let $\widetilde T : \mr^M \times [0,1)^M \to [0,1)^M$ be defined by $\widetilde T_y(\omega^M) = \omega^M + y \pmod 1$. Define the product dynamical system
\[
  T \times \widetilde T: (\mr^N \times \mr^M) \times ( \Omega \times [0,1)^M) \to \Omega \times [0,1)^M
\]
by
\[
  (T \times \widetilde T)_{(x,y)} (\omega^N, \omega^M) = (T_x \omega^N, T_y \omega^M ).
\]
If $T$ is ergodic, then $T \times \widetilde T$ is ergodic as well.
\end{lemma}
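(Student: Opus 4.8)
The plan is to verify the invariant-function characterization of ergodicity, Definition~\ref{defergodic}\,(1), for the product system. So I would take a measurable $f:\Omega\times[0,1)^M\to\mathbb{R}$ with $f\big((T\times\widetilde T)_{(x,y)}(\omega^N,\omega^M)\big)=f(\omega^N,\omega^M)$ for every $(x,y)\in\mathbb{R}^N\times\mathbb{R}^M$ and a.e.\ $(\omega^N,\omega^M)$, and aim to show that $f$ is a.e.\ constant. Replacing $f$ by $\arctan f$, I may assume $f\in L^\infty\subset L^2(\Omega\times[0,1)^M)$. Before that, I would quickly record that $T\times\widetilde T$ really is a dynamical system in the sense of Definition~\ref{defgroup}: the group law holds componentwise, the product measure is preserved because each factor is, and joint measurability of $(\omega,(x,y))\mapsto(T_x\omega^N,T_y\omega^M)$ follows from measurability of each factor. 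By Remark~\ref{rem:ergo-weak} it is in fact enough to run the argument with the strong (``for all $(x,y)$'') form of invariance stated above.

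The main idea is a partial Fourier expansion in the periodic variable. For a.e.\ $\omega^N$ the slice $f(\omega^N,\cdot)$ lies in $L^2([0,1)^M)$, so
\[
  f(\omega^N,\omega^M)=\sum_{k\in\mathbb{Z}^M}f_k(\omega^N)\,e^{2\pi i k\cdot\omega^M},\qquad
  f_k(\omega^N):=\int_{[0,1)^M}f(\omega^N,\omega^M)\,e^{-2\pi i k\cdot\omega^M}\,\td\omega^M,
\]
with $f_k\in L^2(\Omega)$ and $\sum_k\|f_k\|_{L^2(\Omega)}^2=\|f\|_{L^2}^2<\infty$. The key structural observation is that the coefficient map $P_k:f\mapsto f_k$ is a bounded linear operator $L^2(\Omega\times[0,1)^M)\to L^2(\Omega)$ that intertwines the product action with the original one up to a phase: $P_k\big(f\circ(T\times\widetilde T)_{(x,y)}\big)=e^{2\pi i k\cdot y}\,(P_kf)\circ T_x$. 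Applying $P_k$ to the invariance identity with $y=0$ yields $f_k\circ T_x=f_k$ a.e.\ for every $x$, so by ergodicity of $T$ (Definition~\ref{defergodic}\,(1)) each $f_k$ equals a constant $c_k$ a.e. Applying $P_k$ to the invariance identity with $x=0$ then gives $e^{2\pi i k\cdot y}c_k=c_k$ for every $y\in\mathbb{R}^M$, which forces $c_k=0$ for all $k\neq0$. Hence $f=c_0$ a.e., and ergodicity of $T\times\widetilde T$ follows.

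I do not expect a genuine obstacle; the lemma is essentially a bookkeeping exercise. The only points needing a little care are routine: justifying that the a.e.\ invariance passes to the Fourier coefficients (for each fixed $(x,y)$ one uses Fubini to get the slicewise identity $f(T_x\omega^N,\omega^M+y)=f(\omega^N,\omega^M)$ for a.e.\ $\omega^N$ and a.e.\ $\omega^M$, then integrates against $e^{-2\pi i k\cdot\omega^M}$), handling the ``for all $(x,y)$ / for a.e.\ $\omega$'' quantifier order (unproblematic since invariance is assumed for each $(x,y)$ separately), and the standard reduction from an arbitrary measurable invariant $f$ to a bounded one. One could alternatively argue with the set-form of ergodicity via Remark~\ref{rem:ergo-weak}, but the function-form argument above is the most transparent.
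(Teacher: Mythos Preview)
Your proof is correct, but it takes a different route from the paper's. The paper argues with the invariant-set formulation (Remark~\ref{rem:ergo-weak}): given $B\subset\Omega\times[0,1)^M$ invariant under all $(T\times\widetilde T)_{(x,y)}$, one first sets $x=0$ and uses that translations act transitively on $[0,1)^M$ to conclude that each $\omega^N$-slice of $B$ is either empty or all of $[0,1)^M$, so $B=B'\times[0,1)^M$; then ergodicity of $T$ forces $P(B')\in\{0,1\}$. Your argument instead works with invariant functions and a Fourier expansion in the periodic variable: invariance under $T_x$ (with $y=0$) makes each coefficient $f_k$ constant, and then invariance under $\widetilde T_y$ (with $x=0$) kills the nonzero modes via the phase factor. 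Both proofs share the same two-step skeleton (exhaust invariance in one factor, then the other), but the paper's set-theoretic version is shorter and needs no $L^2$ structure, while your spectral approach is the one that generalizes naturally to products with other compact-group rotations or systems with discrete spectrum.
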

Here we use the weaker formulation of ergodicity, explained in Remark~\ref{rem:ergo-weak}.
\begin{proof}
Let $B \subset \Omega \times [0,1)^M$ be measurable and invariant under $T \times \widetilde T$, i.e.
\[
  (T \times \widetilde T)_{(x,y)}(B) = B \quad \text{ for all } (x,y) \in \mr^N \times \mr^M.
\]
Choosing $x = 0$ we get
\[
 \bigunion_{y \in \mr^M} (T \times \widetilde T)_{(0, y)}(B) = B.
\]
Thus $B$ can be written as $B' \times [0,1)^M$ with $B' \subset \Omega$ measurable. We have
\[
  (\randommeasure \otimes \mathscr{L}^M)(B) = \randommeasure(B').
\]
By ergodicity of $T$ we have $\randommeasure(B') \in \set { 0,1}$ and thus $(\randommeasure \otimes \mathscr{L}^M)(B) \in \set {0,1}$.
\end{proof}
\begin{proof}[Proof of Lemma~\ref{lem:ortho}]
By translating and scaling it suffices to show it for domains $\twodomain \subset Q := \frac 1 2 (-1,1)^M$. 
Fix some $f,g \in L^2(\randomspace \times \twodomain)$ and extend them to $f,g \in L^2(\randomspace \times Q)$ with their corresponding mean-value on $\randomspace \times \twodomain$. Finally extend both function $Q$-periodic onto $\mr^M$.
Assuming $\nabla \times f = 0$ in $\distributions(\randomspace \times \twodomain)$ and $\nabla \cdot g = 0$ in $\widetilde X'(\randomspace \times \twodomain)$, the extended functions
satisfy the PDE clearly on $\randomspace \times (\mz^M + \twodomain)$. For some $\omega \in \Omega$ typical we define the sequence of functions
\begin{equation}\begin{aligned}
\label{eq:def_feps_geps}
  f^\eps(x, y) = f(T_{\eps^{-1} x}\omega, \eps^{-1} y), \quad
  g^\eps(x, y) = g(T_{\eps^{-1} x}\omega, \eps^{-1} y).
\end{aligned}\end{equation}

We prove this lemma by showing that 
\begin{equation}\begin{aligned}\label{eq:separate_convergence}
f^\eps \cdot g^\eps \weakstar \overline f \cdot \overline g \quad \text{ in } \distributions(\mr^N \times Q),
  \end{aligned}\end{equation}
where $\overline f,\overline g$ are the corresponding weak limits for $f^\eps, g^\eps$ as defined in \eqref{eq:def_feps_geps}. By Birkhoff's Theorem we have
\[
  \overline f = \frac 1 {\abs S} {\int_{\randomspace \times \twodomain} f(\randomelement, z) \drandommeasure \td z }, \quad
  \overline g = \frac 1 {\abs S} {\int_{\randomspace \times \twodomain} g(\randomelement, z) \drandommeasure \td z },
\]
as well as
\begin{equation}\begin{aligned}\label{eq:weakproduct}
f^\eps \cdot g^\eps \weakstar \overline {fg}\quad \text{ in } L^1_{\loc}(\mr^N \times Q)
  \end{aligned}\end{equation}
by Lemma~\ref{lem:productdynamic}. By uniqueness of the limit both have to agree, which is the claim of the lemma.

From \eqref{eq:weakproduct} we deduce, that convergence holds for every $\eps \to 0$, and to identify the limit in terms of $\overline f, \overline g$ it suffices to choose the specific sequence $\eps_n = n^{-1}$, where we suppress the index $n$ and still write $\eps \to 0$ instead of $n\to\infty$.

To show \eqref{eq:separate_convergence} we simulate the div-curl lemma (see e.g., \cite{Allaire-02}[Lemma~1.3.1]). By locality of the statement we can reduce ourselves to the case $K \subset\subset \mr^N \times Q$ and define $K^\twodomain = K \intersect ( \mr^N \times \twodomain )$. We can assume that $K^S$ has Lipschitz boundary. Furthermore we may assume that the weak limits of $f^\eps, g^\eps$ are zero. 

Define $\psi$ to be the primitive of $f$ on the domain $\mr^N \times  \twodomain$ with the property $\int_{K^\twodomain }  \psi(x,  y) = 0$. Extend $\psi$ onto $\mr^N \times ( \mz^M + \twodomain)$ periodically.
Furthermore  define 
\[
  \psi^\eps(x,y) = \eps \psi( \frac x \eps, \frac y \eps ) + c^\eps \quad \text{ on } \quad \mr^N \times \eps(\mz^M + \twodomain),
\]
with the constant
\[
  c^\eps = - \int_{K^\twodomain} \eps \psi(\frac x \eps ,  y) \td (x,y).
\]
By construction we have
\begin{align*}	
\nabla \psi^\eps = f^\eps \text{ on } \mr^N \times \eps(\mz^M +S) \quad \text{ as well as } \quad \int_{K^\twodomain}  \psi^\eps(x, \eps y)  \td (x,y)= 0.
\end{align*}

We define the finite set $Z_\eps := \mz^M \intersect  (-\eps^{-1},\eps^{-1})^M$ and partition $Q$ (up to a null set) into 
\[
  Q = \bigunion_{k \in Z_\eps} \eps( Q + k).
\]
Fix some $\varphi \in C^\infty_0(\mr^n \times Q)$ and we compute
\begin{align*}	
  \int_{\mr^n \times Q} \varphi(x,y) \td(x,y)  & = \sum_{k \in Z_\eps} \int_{\mr^n \times (\eps Q + \eps k)} \varphi(x,y) \td(x,y)\\
                                                               &\quad=  \int_{\mr^n \times (\eps Q )} \brackets*{\sum_{k \in Z_\eps}\varphi(x,y+\eps k)} \td(x,y).
\end{align*}
Using additionally the periodicity of $f^\eps, g^\eps$, we thus get
\begin{align*}	
  &\int_{\mr^N \times Q} f^\eps(x,y)\cdot g^\eps(x,y)  \varphi(x,y)\td(x,y) \\
  &=  
\int_{\mr^N \times ( \eps Q) } f^\eps(x,y) \cdot g^\eps(x,y) \brackets*{\sum_{k \in Z_\eps}\varphi(x, y+\eps k)}   \td(x,y)\\
  &=  \int_{\mr^N \times \brackets{ \eps (Q\setminus \twodomain)} } f^\eps(x,y) \cdot g^\eps(x,y) \brackets*{\sum_{k \in Z_\eps}\varphi(x, y+\eps k)}  \td(x,y) \\
&\quad +\int_{\mr^N \times  (\eps \twodomain) } f^\eps(x,y) \cdot g^\eps(x,y) \brackets*{\sum_{k \in Z_\eps}\varphi(x, y+\eps k)}  \td(x,y) \\
&=\int_{\mr^N \times  [\eps (Q\setminus \twodomain)] } \overline {f} \cdot \overline{g}\cdot  \brackets*{\sum_{k \in Z_\eps}\varphi(x, y+\eps k)}  \td(x,y) \\
&\quad+ \int_{\mr^N \times  \partial(\eps \twodomain) } \scalar{ (g^r)^\eps(x,y), \nu}\brackets*{\psi^\eps(x,y) \sum_{k \in Z_\eps}\varphi(x, y+\eps k)} \td(x,y) \\
&\quad- \int_{\mr^N \times  (\eps \twodomain) } \psi^\eps(x,y) \cdot  \div\brackets*{g^\eps(x,y)\sum_{k \in Z_\eps}\varphi(x, y+\eps k)}  \td(x,y).
\end{align*}
The first term vanishes by the assumption on the weak limits of $f^\eps, g^\eps$, while the second one vanishes by the boundary condition on $g$. In the last term we apply the product rule: the term, where the divergence falls on $g^\eps$ vanishes, by using the PDE and density.

We are left with
\begin{align*}
  &\int_{\mr^N \times Q} f^\eps(x,y)\cdot g^\eps(x,y)  \varphi(x,y) \td (x,y)\\
&= -\int_{\mr^N \times (\eps \twodomain)}  \psi^\eps (x,y) \cdot  \scalar*{ g^\eps(x,y), {\sum_{k \in Z_\eps}\nabla\varphi(x, y+ \eps k)} } \td (x,y) \\
&= -\int_{K^\twodomain}  \psi^\eps (x,\eps y) \cdot  \scalar*{ g^\eps(x,\eps y), {\eps^M\sum_{k \in Z_\eps}\nabla\varphi(x,\eps  y+ \eps k)} }\td (x,y).
\end{align*}
We note that $(x,y) \mapsto \psi^\eps (x,\eps y)$ is uniformly bounded in $L^2(K^\twodomain)$. Indeed, using Poincar\'e's inequality, recalling $ \int_{K^\twodomain} \psi^\eps(x, \eps y) \td(x,y) = 0$, yields
\[
  \norm{ \psi^\eps (x, \eps y)  }_{L^2(K^\twodomain)} \leq  C_{K^\twodomain} \norm{ f^\eps(x, \eps y)}_{L^2(K^\twodomain)} = C_{K^\twodomain} \norm{ f(T_{\eps^{-1}x} \randomelement,  y)}_{L^2(K^\twodomain)}.
\]
Furthermore the sequence 
\[
 (x,y) \mapsto  f^\eps( x, \eps y) = f(T_{\eps^{-1}x} \randomelement, y)
\]
is uniformly bounded in $L^2(K^S)$ for almost all $\randomelement \in \randomspace$. To see this define for every $x \in \mr^N$ the  cross sections
\[
  K^\twodomain_x := \set *{y \in \mr^M :   (x,y) \in K^\twodomain } \subset \twodomain,
 \]
and thus
\begin{align*}	
\int_{K^\twodomain} \abs { f(T_{\eps^{-1}x} \randomelement, y)}^2 \td (x,y)
  &=  \int_{\mr^N} \int_{K^\twodomain_x}\abs { f(T_{\eps^{-1}x} \randomelement, y)}^2   \td y \td x\\
  &\leq  \int_{ \set {x \in \mr^N: K^S_x \neq \emptyset}}\paren*{ \int_{\twodomain}\abs { f(\cdot, y)}^2   \td y }(T_{\eps^{-1}x} \randomelement)\td x.
  \end{align*}
By the Ergodic Theorem the integrand converges for almost every $\randomelement$ to $C \norm{ f }_{L^2(\randomspace\times S)}^2$, for some constant $C > 0$ depending only on $K^S$.
Thus for almost every $\randomelement \in \randomspace$ we have
\[
\limsup_{ \eps \downarrow 0 } \int_{K^\twodomain} \abs { f(T_{\eps^{-1}x} \randomelement, y)}^2 \td (x,y) < \infty,
\]
and therefore the left-hand side is uniformly bounded for almost every $\randomelement$.

Noticing also that
\[
  \nabla \paren* { (x,y) \mapsto  \psi(x, \eps y) } = (f_1, \ldots, f_N, \eps f_{N+1}, \ldots, \eps f_L),
\]
we have a uniform bound on $(x, y) \mapsto \psi^\eps (x, \eps y)$
in $W^{1,2}(K^\twodomain)$ and thus a subsequence converging weakly to some $\Psi \in W^{1,2}(K^\twodomain)$. By Rellich's Theorem we have also strong convergence in $L^2(K^S)$. Additionally $\Psi$ does not depend on $y$: to see this, we apply Poincar\'e's inequality once more and obtain
\begin{align*}	
\norm{ \partial_y \Psi }_{L^2(K^S)} &\leq \liminf_{\eps\downarrow 0} \norm{ \partial_y ( (x,y) \mapsto \psi^\eps(x,\eps y)) }_{L^2(K^S)}\\
  & \leq    \liminf_{\eps\downarrow 0}\eps\norm{ (x,y) \mapsto f^\eps(x,\eps y) }_{L^2(K^S)} = 0.
\end{align*}

The sequence of functions $(x,y) \mapsto g^\eps(x, \eps y) = g(T_{\eps^{-1} x}\omega, y)$ converges weakly to $y \mapsto (\int_{\randomspace} g(\omega,y) \td \omega)$ in $L^2(Q)$, a function independent of $x$. Finally observe that
\[
\brackets*{ (x,y)\mapsto \eps^M\sum_{k \in Z_\eps}\nabla\varphi(x,\eps  y+ \eps k)} 
 \to
\brackets[\Big]{x \mapsto \int_Q \nabla \varphi(x, \hat y)\td \hat y}
\]
uniformly in $x$. We thus have
\begin{align*}	
 &\int_{\mr^N \times  \twodomain}  \scalar*{ g^\eps(x,\eps y),  \psi^\eps (x,\eps y)  \paren[\Big]{\eps^M\sum_{k \in Z_\eps}\nabla\varphi(x,\eps  y+ \eps k)} }\td(x,y)  \\
 & \to \int_{\mr^N \times  \twodomain}  \scalar*{ { \int_{\randomspace} g(\omega,y) \drandommeasure}\;,\;  \Psi (x)  {\int_Q \nabla \varphi(x, \hat y) \td \hat y} }\td(x,y),
\end{align*}
since the first factor converges weakly and the second strongly. Rearranging the integrals yields
\begin{align*}	
 &\int_{\mr^N \times  \twodomain}  \scalar*{ { \int_{\randomspace} g(\omega,y) \drandommeasure}\;,\;  \Psi (x) \int_Q \nabla \varphi(x, \hat y) \td \hat y } \td(x,y)\\
 &\hspace{3cm} = \scalar*{ \abs{\twodomain}\cdot \overline g \;,\;  \int_{\mr^N \times Q} \Psi (x) \cdot  \nabla \varphi(x, \hat y) \td \hat y \td x } = 0,
\end{align*}
since $\overline g = 0$. This finishes the proof.
\end{proof}

\subsubsection{The decomposition}
We will prove Theorem~\ref{thm:weyl} (i) similar to \cite{zhikov2}[Lemma~7.3]. For this we introduce a mollifier in the mixed setting.
Let $K_1 \in C^\infty_0(\mr^N), K_2 \in C^\infty_0(\mr^M)$ be standard mollifier, i.e. $K_1, K_2$ are even functions, i.e. $K_i(x) = K_i(-x)$ for all $x$ and $i=1,2$, and
\[
  K_1, K_2 \geq 0, \quad \int_{\mr^N} K_1 = \int_{\mr^M} K_2 = 1.
\]
Define for $\delta > 0$ the sequences
\[
  K_1^\delta(s) = \frac 1 {\delta^N} K_1( \delta^{-1} s), 
\quad
  K_2^\delta(y) = \frac 1 {\delta^M} K_2( \delta^{-1} y),
\]
and further the mollification-operators $\mathcal J^\delta$ for $g \in L^2(\Omega \times \mr^M)$ by
\[
  (\mathcal J^\delta g)(\omega,x) = \int_{\mr^N} \int_{\mr^M} K_1^{\delta}(s) K_2^{\delta}(x-y) g( T_s \omega, y) \td y \td s.
\]
It is easily seen that $\mathcal J^\delta g$ is a continuous, linear, symmetric operator $L^2(\Omega \times \mr^M) \to L^2(\Omega \times \mr^M)$ with
\[
  \lim_{\delta \downarrow 0} \mathcal J^\delta g = g \quad \text{ strongly in } L^2(\Omega \times \mr^M).
\]
Furthermore $\mathcal J^\delta g \in \stochsmooth(\Omega, C^\infty(\mr^M))$ for $g \in L^2(\Omega \times \mr^M)$ and 
\[
  \nabla (\mathcal J^\delta g) = \mathcal J^\delta \nabla g \quad \text { for all } g \in W^{1,2}(\Omega \times \mr^M).
\]

\begin{proof}[Proof of Theorem~\ref{thm:weyl} (i)]
The orthogonality between $F^2_{\mathrm{pot}}(\randomspace \times \twodomain)$ and $L^2_{\mathrm{sol}}(\randomspace \times \twodomain)$ follows from Lemma~\ref{lem:ortho}. Therefore 
\[
 L^2_{\mathrm{sol}}(\randomspace \times \twodomain) \subset \brackets{F^2_{\mathrm{pot}}(\randomspace \times \twodomain)}^\perp.
\]
For the reverse inclusion let $g \in \brackets{F^2_{\mathrm{pot}}(\randomspace \times \twodomain)}^\perp$, i.e.
\begin{equation*}\label{eq:ortho}\begin{aligned}
\scalar* { g, f }_{L^2} = 0 \quad \text{ for all } f \in F^2_{\mathrm{pot}}(\randomspace \times \twodomain).
\end{aligned}\end{equation*}
Fix some $\varphi \in \widetilde \testfunctions$, and note that $\nabla \varphi \in F^2_{\mathrm{pot}}(\randomspace \times \twodomain)$. 
Extend both $\varphi$ and $g$ by $0$ to functions defined on $\Omega \times \mr^M$

Fix some $\delta > 0$. By using the elementary properties of the mollification operator $\mathcal J^\delta$ we have 
\[
  0 = \scalar { g , \nabla \mathcal J^\delta \varphi }_{L^2}  =\scalar { g , \mathcal J^\delta(\nabla \varphi)}_{L^2}  =\scalar{ \mathcal J^\delta g, \nabla \varphi }_{L^2} 
                                         = -\scalar{ \div \mathcal J^\delta g, \varphi}_{L^2}.
\]
 By the density of $\widetilde \testfunctions \subset \widetilde X \subset L^2( \randomspace \times \twodomain)$ we get $\div (\mathcal J^\delta g) = 0$ a.e., thus
\[
   \div \mathcal J^\delta g = 0 \quad \text{ in } \widetilde \distributions.
\]
By the strong convergence $\mathcal J^\delta g \to g$ in $L^2(\Omega \times S)^L$ we get
\[
   \div g = 0 \quad \text{ in } \widetilde \distributions.
\]
Furthermore for any $\psi \in \widetilde X$ we have $\nabla \psi \in F^2_{\mathrm{pot}}$ as well and thus
\begin{equation*}\label{eq:intbyparts}\begin{aligned}
0 = \int_{\randomspace \times \twodomain } \scalar{ \mathcal J^\delta g, \nabla \psi }
&= \int_{\randomspace \times \twodomain } \div ( \mathcal J^\delta (g)\cdot \psi)  \\
&= \int_{\randomspace \times \twodomain } \sum_{k=N+1}^L \partial_k(\mathcal J^\delta (g)_k\cdot \psi)
 = \int_{\randomspace}\int_{\partial \twodomain} \scalar{ \psi \mathcal J^\delta(g^r), \nu}.
  \end{aligned}\end{equation*}
Note that $\mathcal J^\delta g \to g$ in $L^2(\Omega \times S)$ together with $\div g = \div \mathcal J^\delta g =0$ implies that $\mathcal J^\delta g  \to g$ in $W^{1,2}_{\div}(\Omega \times S)$.
Together with the equality
\[
  \int_{\randomspace}\int_{\partial \twodomain} \scalar{ \psi \mathcal J^\delta(g''), \nu} = 0,
\]
following from Lemma~\ref{lem:normaltrace}, the strong convergence $\mathcal J^\delta g\to g$ in $W^{1,2}_{\div}(\Omega \times S)$ is enough to conclude that $\div g = 0$ in $\widetilde X'$, thus $g \in L^2_{\mathrm{sol}}(\Omega \times S)$.
\end{proof}
For the proof of Theorem~\ref{thm:weyl} (ii) we follow \cite{gloria1}:

\begin{proof}[Proof of Theorem~\ref{thm:weyl} (ii)]
From classical Hilbert space theory follows
\[
L^2(\randomspace \times \twodomain) = \adh_{L^2} \set { \nabla \chi : \chi \in \widetilde X } \oplus \brackets*{\adh_{L^2} \set { \nabla \chi : \chi \in \widetilde X }}^\perp.
\]
By the previous orthogonal decompositions it is enough to show that
\[
\brackets*{\adh_{L^2} \set { \nabla \chi : \chi \in \widetilde X }}^\perp = L^2_{\mathrm{sol}}(\randomspace \times \twodomain),
\]
since then
\[
\adh_{L^2} \set { \nabla \chi : \chi \in \widetilde X } = F^2_{\mathrm{pot}}(\randomspace \times \twodomain)
\]
follows trivially from 
\[
    L^2(\randomspace \times \twodomain) = F^2_{\mathrm{pot}}(\randomspace \times \twodomain)\oplus L^2_{\mathrm{sol}}(\randomspace \times \twodomain).
\]
But 
\[
\brackets*{\adh_{L^2} \set { \nabla \chi : \chi \in \widetilde X }}^\perp = L^2_{\mathrm{sol}}(\randomspace \times \twodomain)
\]
was just the definition of the space $L^2_{\mathrm{sol}}(\Omega \times S)$.
\end{proof}
The claim of Theorem~\ref{thm:weyl} (iii) can be proven almost identically.



{\bf Acknowledgment.} 
The first two authors were fully supported by DFG. 
This third author has been fully supported by Croatian Science Foundation grant number 9477.

\end{document}